\renewcommand{\leq}{\leqslant}
\renewcommand{\geq}{\geqslant}
\newcommand{\abs}[1]{\mathopen|#1\mathclose|}
\newcommand{\an}{^{\mathrm{an}}}
\newcommand{\ad}{^{\mathrm{ad}}}
\newcommand{\gma}[1]{(\mathbf G_{\mathrm m,k}\an)^{#1}}
\newcommand{\gmb}[2]{(\mathbf G_{\mathrm m,#1}\an)^{#2}}
\newcommand{\gmc}[1]{\mathbf G_{\mathrm m,k}^{#1}}
\newcommand{\sacc}[1]{\mathscr S^{\mathrm{acc}}(#1)}
\newcommand{\sel}[1]{\mathscr S^{\mathrm{el}}(#1)}
\newcommand{\scl}[1]{\mathscr S^{\mathrm{cl}}(#1)}
\newcommand{\hotimes}{\hat \otimes}
\newcommand{\hr}[1]{\mathscr H(#1)}
\newcommand{\hrt}[1]{\widetilde{\mathscr H(#1)}}
\newcommand{\gad}{^{\Gamma\text{-}\mathrm{ad}}}
\newcommand{\gadc}[1]{\overline{
#1^{\Gamma\text{-}\mathrm{ad}}}}
\newcommand{\perf}[1]{\widehat{#1^{1/p^\infty}}}
\newcommand{\spec}{{\rm Spec}\;}
\newcommand{\grot}[1]{_{\mathrm G}^{#1}}
\renewcommand{\phi}{\varphi}
\renewcommand{\epsilon}{\varepsilon}
\newcommand{\dom}[1]{\mathrm D(#1)}
\newcommand{\domc}[1]{\mathrm D^{\mathrm c}(#1)}
\newcommand{\domaff}[1]{\mathrm D^{\mathrm{aff}}(#1)}
\newcommand{\N}{\mathbf Z_{\geq 0}}
\renewcommand{\P}{\mathbf P}
\newcommand{\Q}{\mathbf Q}
\newcommand{\R}{\mathbf R}
\newcommand{\Z}{\mathbf Z}
\newcommand{\Tau}{\mathrm T}
\newcommand{\rpos}{\R_{\scriptscriptstyle >0}} 
\newcommand{\rposn}[1]{\rpos^{#1}}
\title{Les squelettes accessibles d'un espace de Berkovich}
\author{Antoine Ducros}
\address{Sorbonne Université, Université Paris-Diderot, CNRS, Institut de Mathématiques de Jussieu-Paris
Rive Gauche, IMJ-PRG, 
F-75005, Paris, France
}
\email{antoine.ducros\at imj-prg.fr}
\urladdr{http://www.imj-prg.fr/$\sim$antoine.ducros}
\author{Amaury Thuillier}
\address{Institut Camille Jordan, Université de Lyon 1, 43 boulevard du 11 novembre 1918
69622 Villeurbanne cedex
France}
\email{thuillier\at math.univ-lyon1.fr}
\urladdr{http://math.univ-lyon1.fr/$\sim$thuillier}
\date{}
\begin{abstract}
Nous définissons une classe de squelettes sur les espaces de Berkovich, dits \textit{accessibles}, qui contient le squelette
standard de $\gma n$ pour tout $n$ et est 
préservée par G-recollement, par image réciproque sous un morphisme de dimension relative nulle, 
et par image directe sous un morphisme topologiquement propre en restriction au squelette concerné. 
\end{abstract}
\begin{document}

\maketitle

\tableofcontents

\setcounter{section}{-1}
\section{Introduction}
Parmi les différentes approches aujourd'hui disponibles en géométrie non archimédienne (points de vue de Tate, Raynaud, Berkovich, Huber\ldots), 
celle de Berkovich est la plus adaptée à l'étude des liens profonds entre les mondes ultramétrique et linéaire par morceaux.
(Précisons que nous adoptons
ici le point de vue \textit{multiplicatif} :
un polytope est une partie de $\rposn n$ qui s'identifie \textit{via} 
le logarithme à une union finie d'intersections finies de demi-espaces
définis au moyen de formes affines à parties linéaires rationnelles, et
ces polytopes sont les briques de base permettant de définir 
un espace
linéaire par morceaux abstrait
par un procédé de recollement 
un peu délicat -- voir \ref{pl-abstrait} pour les détails). 

En effet, un espace de Berkovich
$X$ contient pléthore de sous-ensembles sur lesquels la structure analytique
de $X$ induit une structure linéaire par morceaux
naturelle, que l'on 
appelle les \textit{squelettes} de $X$
(définition 
\ref{def-squelette}). 
 Le prototype d'un tel squelette est le sous-ensemble $S_n$ de $\gma n$ constitué des normes de Gau\ss~~
$\sum a_I T^I\mapsto \max \abs{a_I}r^I$ pour $r\in \rposn n$ : sa structure linéaire
par morceaux se déduit de celle de
$\rposn n$ \textit{via} l'homéomorphisme $\abs T\colon S_n\simeq \rposn n$. Précisons que
si $f\colon Y\to X$ est un morphisme d'espaces analytiques et si $\Sigma$ et $\Tau$ sont deux squelettes de 
$X$ et $Y$ respectivement tels que $f(\Tau)\subset \Sigma$ alors $f|_\Tau\colon \Tau \to \Sigma$
est \textit{automatiquement} linéaire par morceaux.

Les squelettes sont apparus dès les travaux fondateurs de Berkovich, pour décrire le type d'homotopie de certains espaces analytiques. 
Plus précisément il a démontré dans \cite{berkovich1999} puis \cite{berkovich2004} que si $X$ est un espace $k$-analytique compact 
et quasi-lisse possédant un modèle formel polystable $\mathfrak X$ 
(par exemple, si $X$ est une courbe projective lisse sur un corps algébriquement clos)
alors $X$ se rétracte par déformation sur l'un de ses squelettes, qui est naturellement homéomorphe au complexe simplicial dual
de la fibre spéciale de $\mathfrak X$. Plus récemment Hrushovski et Loeser ont prouvé dans \cite{hrushovski-l2016}
que si $X$ est une variété algébrique quasi-projective et lisse sur un corps
ultramétrique complet $k$ alors $X\an$ se rétracte par déformation sur l'un 
de ses squelettes compacts. Précisons que leurs méthodes n'ont rien à voir avec celles de Berkovich : ils utilisent de la théorie 
des modèles au sens de la logique sans faire intervenir le moindre modèle entier au sens de la géométrie arithmétique. 
Mentionnons
également que leurs techniques s'appliquent tout aussi bien si $X$ n'est pas supposé lisse, mais que
la rétraction se fait alors
sur un squelette \textit{généralisé}, qui est modelé localement 
sur des adhérences dans
$\R_{\scriptscriptstyle \geq 0}^n$
de polytopes de $\rposn n$. 

Les squelettes ont par ailleurs été étudiés d'un autre point de vue par le premier auteur de ce
texte dans les articles \cite{ducros2003b} et
\cite{ducros2012b}. Il a démontré que 
si $X$ est un espace analytique de dimension $\leq n$ et si $f$ est un  morphisme de $X$
vers $\gma n$, alors $f^{-1}(S_n)$ est un squelette de $X$ (vide dès que $\dim X<n$) ;
et que de plus une union finie de tels
squelettes de $X$ est encore un squelette. En utilisant cette dernière propriété, on prouve
(proposition \ref{gunion-squelettes}) que si $\Sigma$ est une partie de $X$ possédant une famille
($\Sigma_i)$ de sous-ensembles telle que :  
\begin{itemize}[label=$\diamond$] 
\item chaque $\Sigma_i$ est une partie linéaire par morceaux d'un squelette 
$f_i^{-1}(S_{d_i})$ où $f_i$ est un morphisme de $Y_i$ vers $\gma {d_i}$ pour un certain sous-espace analytique fermé
$Y_i$ purement de dimension $d_i$ d'un domaine analytique de $X$ ; 
\item les $\Sigma_i$ G-recouvrent $\Sigma$, ce qui veut dire que tout point $x$ de $\Sigma$ a un voisinage dans $\Sigma$ qui est réunion d'un nombre
fini de $\Sigma_i$ contenant $x$, 
\end{itemize}
alors $\Sigma$ est un squelette. Nous qualifierons de \textit{classique} un tel squelette. En pratique, tous les squelettes
rencontrés jusqu'alors (les buts des rétractions par déformation, ceux qui sont utilisés pour intégrer les formes différentielles
dans le travail en cours \cite{chambertloir-d-2012}, ceux  intervenant dans l'étude du treillis des fonctions tropicales menée à bien 
dans \cite{dhly2024}\ldots) étaient  de ce type, ce qui justifie la terminologie. 

Il résulte essentiellement des définitions et constructions 
si $X$ est un espace analytique
et si $\Sigma$ est une partie de $X$, 
les propriétés suivantes sont satisfaites (proposition \ref{recap-squelettes-classiques}) : 
\begin{enumerate}[a]
\item si $Z$ est un sous-espace analytique fermé d'un domaine analytique de $X$
et si $\Sigma \subset Z$,
alors $\Sigma$ est un squelette G-localement élémentaire de $Z$ si et seulement si
c'est un squelette G-localement élémentaire de $X$ ; 
\item si $\Sigma$ est un squelette G-localement élémentaire de $X$, les squelettes G-localement élémentaires
de $X$ contenus dans $\Sigma$ sont exactement les parties linéaires par morceaux de $\Sigma$ ; 
\item si $\Sigma$ possède un G-recouvrement par des
squelettes G-localement élémentaires de $X$, c'est un squelette G-localement élémentaire de $X$ ; 

\item l'intersection de deux squelettes G-localement élémentaires est un squelette G-localement élémentaire ; 
\item si $\Sigma$ est
un squelette G-localement élémentaire de $X$ et si $f\colon Y\to X$ est un morphisme de dimension relative nulle en tout point de $f^{-1}(\Sigma)$ alors
$f^{-1}(\Sigma)$ est un squelette G-localement élémentaire de $Y$. 
\end{enumerate}
(Remarquons que concernant la propriété (e) on ne peut pas espérer supprimer l'hypothèse de dimension 
relative nulle : si $f$ possédait des fibres de dimension $>0$ au-dessus de $\Sigma$, celles-ci n'auraient aucun
espoir, pour de simples raisons topologiques, de pouvoir être munies d'une structure linéaire par morceaux). 

L'objectif de ce travail était d'étudier la question suivante : \textit{la classe des squelettes 
classiques possède-t-elle également de bonnes propriétés de stabilité par images
\emph{directes} ? } Précisons tout de suite qu'espérer une telle stabilité sans 
hypothèse de compacité semble illusoire : considérer par
exemple
l'image du squelette $\coprod_{r\in 2^{\Q}}\{\eta_r\}$ de
$\coprod_{r\in 2^{\Q}}
\mathbf G_{\mathrm m,k}^{\mathrm{an}}$
sous la flèche canonique $\coprod_{r\in 2^\Q}
\mathbf G_{\mathrm m,k}^{\mathrm{an}}
\to \mathbf G_{\mathrm m,k}^{\mathrm{an}}$.

Nous nous sommes donc intéressés à l'image
sous un morphisme $f\colon Y\to X$  d'un squelette classique 
$\Sigma\subset Y$ sous un morphisme $f\colon Y\to X$ dont la restriction à $\Sigma$ est topologiquement propre ; 
le résultat principal de cet article (théorème \ref{image-squelette-glocalelem}) assure que $f(\Sigma)$ est alors
un squelette. Nous ne savons pas s'il est classique, et penchons pour la négative
en général, même si expliciter un contre-exemple semble délicat ; précisons toutefois que si $f(\Sigma)$ est constitué uniquement de points
de rang d'Abhyankar égal à la dimension ambiante, il est classique : c'est l'assertion (4) du théorème \ref{image-squelette-glocalelem}.

Les parties d'un espace de Berkovich $X$ qui sont 
 G-recouvertes par des sous-ensembles de la forme $f(\Sigma)$ avec $f$ et $\Sigma$ comme ci-dessus sont alors des squelettes 
 que nous qualifions d'\emph{accessibles}, et
 la classe des squelettes accessibles satisfait essentiellement par construction les propriétés
 (a) à (e) ci-dessus, ainsi que la stabilité par image directe sous un morphisme dont la restriction
 au squelette concerné est topologiquement propre (théorème \ref{theo-accessibles})

Disons maintenant quelques mots de la preuve
du théorème \ref{image-squelette-glocalelem}. 
En raisonnant G-localement
on voit tout d'abord qu'on peut
supposer que $X$ et $Y$ sont affinoïdes, que $Y$ est purement 
de dimension $n$ pour un certain $n$, et que $\Tau$ est égal à 
$g^{-1}(S_n)$ pour un certain $g\colon Y\to \gma n$. 
Le théorème 4.3 de 
\cite{ducros2026}, qui repose lui-même
sur la déclinaison non archimédienne \cite{ducros2021b}
des techniques d'aplatissement de Raynaud et Gruson, 
permet de se ramener au cas où $f$ est composé d'un morphisme plat
et équidimensionnel, d'une immersion fermée et d'un morphisme quasi-étale, et il n'y a finalement 
plus que les deux cas particuliers suivants à traiter. 

\paragraph*{Premier cas}
C'est celui où $X$ est purement de dimension $m$ pour un certain $m$, et où $f$ est plat, 
ses fibres étant donc purement de dimension $n-m$. On montre alors
que $f(\Tau)$ est un squelette classique de $X$. 

Pour ce faire, considérons un
point $t$
de
$\Tau$ ; posons $s=f(t)$. 
Puisque $\Tau$
est égal à $g^{-1}(S_n)$, le corps résiduel complété $\hr t$ de $t$ 
est une extension d'Abhyankar de rang $n$ de
$k$, et par un simple argument de dimension 
on en déduit que $\hr s$ est d'Abhyankar de rang $m$ sur $k$, et que
$\hr t$ est d'Abhyankar de rang $n-m$ sur $\hr s$. Il existe alors un 
voisinage affinoïde $V$  de $s$ dans $X$ et un voisinage affinoïde $W$ de $t$ dans $f^{-1}(V)$, 
ainsi que $m$ fonctions
inversibles $h_1,\ldots, h_m$ sur $V$
et $n-m$ fonctions inversibles $h_{m+1},\ldots, h_n$ sur $W$ telles que les $h_i(t)$
forment une base d'Abhyankar de $t$ sur $k$. Autrement dit, si $h\colon W\to \gma n$ désigne
le morphisme induit par les $h_i$ le point $t$ appartient à $h^{-1}(S_n)$. L'intersection 
$\Tau\cap h^{-1}(S_n)$ est un squelette classqiue de $W$ qui contient $t$ et dont l'image
par $f$ est contenue dans le squelette classique $(h_1,\ldots, h_m)^{-1}(S_m)$ de $X$. 
Ainsi nous avons exhibé pour tout point $t\in \Tau$ une partie linéaire par morceaux compacte $\Upsilon_t$
de $\Tau$ et un squelette classique compact $\Sigma_t$ de $X$
tel que $f(\Upsilon_t)\subset \Sigma_t$. L'idée est alors la suivante : on souhaiterait recouvrir $\Tau$ par une
famille \textit{finie}
$(\Upsilon_t)_{t\in E}$ ; cela garantirait que $f(\Tau)$ est contenue dans le squelette classique
$\Sigma:=\bigcup_{t\in E}\Sigma_t$ de $X$ ; l'application $f|_\Tau \colon\Tau \to \Sigma$ serait alors linéaire par morceaux, 
et $f(\Tau)$ serait ainsi une partie linéaire par morceaux de $\Sigma$, et partant
un squelette classique de $X$. 
Mais comme $\Upsilon_t$ n'est pas \textit{a priori} un voisinage de $t$
dans $\Tau$ (il pourrait
même très bien être réduit à $\{t\}$), cette approche 
semble vouée à l'échec. Elle fonctionne cependant, à condition de faire varier $t$
non pas simplement sur $\Tau$ mais sur le sous-ensemble correspondant 
$\Tau^{\mathbf R\text{-}\mathrm{ad}}$ de \textit{l'espace de Huber-Kedlaya
$X^{\mathbf R\text{-}\mathrm{ad}}$} associé à $X$, le point crucial
étant que
$\Tau^{\mathbf R\text{-}\mathrm{ad}}$ est compact et que les $\Upsilon_t^{\mathbf R\text{-}\mathrm{ad}}$
sont des ouverts compacts de
$\Tau^{\mathbf R\text{-}\mathrm{ad}}$ (pour la topologie constructible, qui suffit ici), ce qui entraîne la finitude attendue. Cette étape demande
évidemment une étude préalable de l'avatar des squelettes dans le contexte réel-adique, qui fait l'objet des sections 
\ref{friable-general}, \ref{friable-analytique} et \ref{friable-polytope} ; indiquons que récemment, ces «squelettes adiques» ont été considérés, totalement 
indépendamment, par Hübner et Temkin dans \cite{hubner-temkin2024}.

\paragraph*{Second cas}
C'est celui où $f$ est composée d'une immersion fermée $Y\hookrightarrow Z$ et d'un 
morphisme quasi-étale $Z\to X$ (où $Z$ est compact). Un raisonnement G-local permet 
de se ramener au cas où $Z$ est un domaine analytique compact d'un revêtement fini galoisien 
$X'$ de $X$, dont on note $G$ le groupe. Quitte à remplacer $\Tau$ par $\bigcup_{g\in G}g(\Tau)$
(ce qui ne modifie pas son image) on peut 
le supposer $G$-invariant. Son image s'identifie alors topologiquement au quotient $\Tau/G$, 
lequel hérite d'une structure linéaire par morceaux naturelle qui en fait le quotient de
$\Tau$ par $G$ dans la catégorie linéaire par morceaux (théorème \ref{quotient-fini-pl}). 
En utilisant le fait que $\Tau$ est un squelette et que $\Tau\to \Tau/G$ est une immersion par
morceaux, on montre sans grande difficulté (en ayant à nouveau recours aux squelettes réels-adiques
pour pouvoir utiliser des arguments de compacité)
réels-adiques de voir que $\Tau/G$ est un squelette -- mais rien n'indique par contre
qu'il soit classique. 

\bigskip
Nous allons terminer cette introduction par quelques remarques.
\begin{itemize}[label=$\diamond$]
\item Soit $f\colon Y\to X$ un morphisme entre espaces compacts et soit $\Tau$
un squelette fermé de $Y$. Comme expliqué
ci-dessus, nous montrons que sous certaines hypothèses (si $\Tau$ est classique
ou plus généralement accessible) l'image $f(\Tau)$ est un squelette, ce qui entraîne automatiquement
que $f|_\Tau\colon \Tau \to f(\Tau)$  est linéaire par morceaux, et en particulier que la relation d'équivalence sur $\Tau$ induite par
$f$ est linéaire par morceaux. Or ce fait n'est pas du tout évident \textit{a priori}, et pourrait 
tomber en défaut pour des squelettes généraux. 
En effet, supposons pour simplifier
que $Y$ et $X$ sont affinoïdes et irréductibles, que $f$ est dominant et que $\Tau$
est constitué de points Zariski-dense.  Deux
points $t$ et $t'$ de $\Tau$ ont alors même
image sur $X$ si et seulement si pour toute fonction analytique non nulle $g$ sur $X$
on a $\abs{g(t)}=\abs{g(t')}$ ; or pour une telle $g$ fixée, la condition $\abs{g(t)}=\abs{g(t')}$
définit une relation d'équivalence linéaire par morceaux sur $\Tau$, si bien que la relation d'équivalence induite par $f$ sur $\Tau$ est
une intersection \textit{a priori} infinie de relations linéaires par morceaux. Notre résultat peut donc être vu comme un théorème de finitude
pour une classe raisonnable de squelettes.

\item Pour simplifier l'exposition dans cette introduction, nous n'avons pas évoqué la question des \textit{paramètres de définition}
des espaces linéaires par morceaux et des espaces analytiques en jeu, mais nous y prêtons attention dans le texte, ce qui alourdit
un peu la terminologie. En géométrie linéaire
par morceaux générale nous fixons ainsi un couple $c=(K,\Delta)$ où $K$ est un sous-corps de $\R$ et $\Delta$ un sous-$K$-espace vectoriel 
(pour l'exponentiation) de $\rpos$, et nous travaillons avec des espaces $c$-linéaires par morceaux, c'est-à-dire modelés sur des parties de
$\rposn n$ définies par des inégalités larges entre monômes dont les exposants appartiennent à $K$ et les termes constants à $\Delta$. 
En géométrie analytique, nous fixons un corps ultramétrique complet $k$ et un sous-groupe $\Gamma$ de $\rpos$ qui est non trivial si
$\abs{k^\times}=\{1\}$, et nous travaillons uniquement avec des espaces $k$-analytiques $\Gamma$-stricts, c'est-à-dire qui peuvent être définis
au moyen de paramètres réels appartenant à $\Gamma$ (ainsi tout espace $k$-analytique est $\rpos$-strict, et si $\abs{k^\times}\neq \{1\}$, un espace
$k$-analytique est $\{1\}$-strict si et seulement s'il est strict) ; et nous utilisons également des $\Gamma$-variantes des espaces de 
Huber-Kedlaya et des réductions de germes à la Temkin. Enfin, dans le contexte des espaces $k$-analytiques
$\Gamma$-stricts nous posons $c=(\Q, (\abs{k^\times}\cdot \Gamma)^\Q)$ et nous nous intéressons aux $c$-squelettes, qui ont une
structure $c$-linéaire par morceaux naturelle. 

\item Nous avons mentionné plus haut, à propos des rétractions par déformations qu'ont construites Hrushovski et Loeser, les
\textit{squelettes généralisés}, modelés non pas sur des polytopes de $\rposn n$, mais sur les adhérences de tels polytopes dans 
$\R_{\scriptscriptstyle \geq 0}^n$. Nous travaillons actuellement sur une
extension du présent travail au cas de ces squelettes généralisés, 
qui est techniquement beaucoup plus délicat ; disposer d'une classe de squelettes généralisés se comportant bien par G-recollement, images réciproques
et images directes devrait être très utile à terme pour l'étude du type d'homotopie des espaces de Berkovich généraux ; cet article constitue un premier
pas dans cette direction.

\end{itemize}

Nous adressons tous nos remerciements au rapporteur anonyme pour ses remarques et suggestions très pertinentes à propos d'une première
version de  cet article. 

\medskip
\textit{Ce travail a été amorcé lors de discussions entre les deux auteurs, et le second est très reconnaissant au premier de l'avoir mené à son terme.}

\section{Valuations colorées}

\subsection{Conventions générales relatives aux valuations}
Soit $A$ un anneau
(commutatif unitaire).

\subsubsection{}
Suivant en cela la terminologie utilisée par Huber en
théorie des espaces adiques,
nous appellerons valuation sur $A$ une application $\phi \cdot 
\colon
A\to G\cup\{0\}$ 
où $G$ est un groupe abélien ordonné noté multiplicativement et où $0$ est un plus petit
élément absorbant qu'on adjoint à $G$, qui possède les propriétés suivantes : 
\begin{itemize}[label=$\diamond$]
\item $\phi(1)=1$  ; 
\item $\phi (ab)=\phi (a) \phi(b)$ et 
$\phi(a+b)\leq \max(\phi(a), \phi(b))$ 
pour tout $(a,b)\in A^2$.
\end{itemize}

On dit que deux valuations $\phi_1 \colon A\to G_1\cup\{0\}$ et $\phi_2\colon A\to G_2\cup\{0\}$ 
sont équivalentes s'il existe une valuation 
$\phi \colon A\to G\cup\{0\}$ et deux morphismes injectifs
croissants 
$u_1\colon G\hookrightarrow G_1$ 
et $u_2\colon G\hookrightarrow G_2$ 
tels que le diagramme 

\[
\begin{tikzcd}
&&&G_1\cup\{0\}\\
\\
A\ar[rr,"\phi"]\ar[uurrr, "\phi_1"]
\ar[ddrrr,"\phi_2"']&&G\cup\{0\}\ar[uur, "u_1"']\ar[ddr, "u_2"]&\\
\\
&&&G_2\cup\{0\}
\end{tikzcd}
\]
commute. 

\subsubsection{}
Soit $\phi \colon A\to G\cup\{0\}$ une valuation. Son noyau est un idéal 
premier de $A$ appelé le \textit{support} de $\phi$. Si $\xi$ est un point de
$\spec A$, se donner une valuation sur $A$ de support $\xi$ revient à se donner une
valuation sur le corps résiduel $\kappa(\xi)$ (dans un sens, considérer la valuation induite
sur $\kappa(\xi)$ par passage au quotient ; dans l'autre, composer la valuation donnée sur 
$\kappa(\xi)$ avec l'évaluation en $\xi$). Deux valuations sur $A$ sont équivalentes si
et seulement si elles ont même support et si les valuations induites
sur le corps résiduel correspondant sont équivalentes. 

\subsubsection{Notations}
Pour une valuation sur un corps nous utiliserons le plus souvent le symbole $\abs \cdot$. 
Sur un anneau 
quelconque nous préfèrerons en général une notation plus géométrique, 
pour penser aux valuations comme à des points. Ainsi nous pourrons typiquement 
désigner par $x$ une valuation sur $A$ ; 
et nous noterons alors $\kappa(x)$ le corps résiduel du support de $x$ \textit{muni de la
valuation induite par $x$}, valuation que nous noterons $\abs \cdot$. Le morphisme
d'évaluation de $A$ vers son corps résiduel $\kappa(x)$ sera noté $a\mapsto a(x)$ ; avec ces
conventions, on a $x(a)=\abs {a(x)}$ pour tout $a\in A$, et 
nous opterons  évidemment pour la seconde écriture sauf exception. Remarquons que le groupe ordonné $\abs{\kappa(x)^\times}$
et la flèche $A\to \abs{\kappa(x)}, a\mapsto \abs{a(x)}$ sont déterminés canoniquement
(à unique isomorphisme près) par
la classe d'équivalence de $x$. 

\subsubsection{Fonctorialité}
Soit $\phi\colon A\to B$ un morphisme d'anneaux
et soit $y$ une valuation sur $B$. 
La flèche $A\mapsto 
\abs{\kappa(y)}, a\mapsto \abs{\phi(a)(y)}$
définit une valuation sur $A$, dont la classe
d'équivalence ne dépend que de celle de $y$. 

\subsubsection{Reconstitution d'une valuation}\label{valuation-reconstitution}
Une valuation 
$\abs \cdot$ sur un corps 
$K$ se reconstitue à équivalence près à partir de son anneau
$\{z\in K, \abs z\leq 1\}$. Il s'ensuit qu'une valuation $x$ sur $A$ se reconstitue à équivalence près 
à partir de
l'ensemble
$E:=\{(f,g)\in A^2, \abs {f(x)}\leq \abs {g(x)}\}$. En effet,
$E$ permet tout d'abord de retrouver 
le support $\{f, f(x)=0\}$ de $x$, qui se récrit $\{f\in A, (f,0)\in E\}$. La valuation résiduelle sur le corps 
$\kappa(x)$ a alors pour anneau
$\{f(x)/g(x)\}_{(f,g)\in E, g(x)\neq 0}$. 

\subsection{Valuations colorées}
Soit $\Delta$ un groupe abélien ordonné, soit $k$
un corps muni d'une $\Delta$-valuation, c'est-à-dire d'une valuation $\abs \cdot\colon k\to \Delta\cup\{0\}$, 
et soit $A$ une $k$-algèbre.

\subsubsection{Groupes colorés}
Un  groupe abélien ordonné \textit{$\Delta$-coloré}
est un groupe abélien ordonné $G$ muni
d'un morphisme injectif croissant $\Delta\hookrightarrow G$. 
Un groupe abélien ordonné $\Delta$-coloré $G$ 
est dit \textit{bridé}
si l'enveloppe
convexe de l'image de $\Delta\hookrightarrow G$
est égale à $G$ tout entier. 

Un morphisme $G\to G'$ de groupes abéliens ordonnés $\Delta$-colorés est 
est un morphisme de groupes croissant
tel que le diagramme
\[\begin{tikzcd}
G\ar[rr]&&G'\\
&\Delta\ar[ul,hook']\ar[ur,hook]&\end{tikzcd}\]
commute.

\subsubsection{Valuations colorées}
Une \textit{$k$-valuation $\Delta$-colorée} sur $A$ est la donnée d'un groupe abélien ordonné
$\Delta$-coloré $G$ et d'une valuation 
$A\to G\cup\{0\}$ 
telle que le diagramme 
\[\begin{tikzcd}
A\ar[r]&G\cup\{0\}\\
k\ar[u]\ar[r,"\abs \cdot"]&\Delta\cup\{0\}\ar[u, hook]\end{tikzcd}\]
commute.

On dit que deux $k$-valuations
$\Delta$-colorées $\phi_1 \colon A\to G_1\cup\{0\}$ et $\phi_2\colon A\to G_2\cup\{0\}$ 
sont équivalentes
s'il existe une $k$-valuation 
$\Delta$-colorée $\phi \colon A\to G\cup\{0\}$ 
et deux morphismes
$u_1\colon G\hookrightarrow G_1$ 
et $u_2\colon G\hookrightarrow G_2$ 
de groupes abéliens ordonnés $\Delta$-colorés 
tels que
l'on ait $u_1\circ \phi=\phi_1$ et $u_2\circ \phi=\phi_2$. Si $\xi$ est un point de
$\spec A$, se donner une $k$-valuation
$\Delta$-colorée sur $A$
de support $\xi$ revient à se donner une
$k$-valuation $\Delta$-colorée sur le corps résiduel $\kappa(\xi)$, 
et deux $k$-valuations $\Delta$-colorées sur $A$ 
sont équivalentes si
et seulement si elles ont même support et si les $k$-valuations 
$\Delta$-colorées 
sur le corps résiduel correspondant sont équivalentes. 

\subsubsection{}
Soit $x$ une $k$-valuation $\Delta$-colorée sur $A$, à valeurs dans
$G\cup\{0\}$ pour un certain groupe abélien ordonné $\Delta$-coloré $G$. 
Le morphisme structural $\Delta\hookrightarrow G$ permet de voir $\Delta$
comme un sous-groupe de $G$. Le groupe abélien ordonné $\Delta$-coloré 
$\abs{\kappa(x)}^\times \cdot \Delta\subset G$
et la flèche $A\to \abs{\kappa(x)}\cdot \Delta, a\mapsto a(x)$ sont alors déterminés 
à unique isomorphisme près par la classe de $x$. 
Dans ce contexte, la notation $\abs \lambda$ pour $\lambda\in k^\times$
pourrait apparaître
ambiguë, puisque pouvant désigner aussi bien désigner l'image de $\lambda$ dans
$\abs{\kappa(x)^\times}$ sous la valuation structurale de $\kappa(x)$ (le corps $k$
étant identifié naturellement à un sous-corps de $\kappa(x)$) que son image dans $\Delta$
par la valuation structurale de $k$. Mais il n'en est rien : la définition même d'une $k$-valuation $\Delta$-colorée
assure que ces deux images sont égales lorsqu'on les voit comme appartenant à $\abs{\kappa(x)}^\times \cdot \Delta$. 
Si $\Gamma$ est un sous-groupe de $\Delta$ l'engendrant modulo $\abs{k^\times}$ on a
$\abs{\kappa(x)^\times}\cdot \Gamma=\abs{\kappa(x)^\times}\cdot \Delta$. 
Nous dirons qu'une $k$-valuation $\Delta$-colorée $x$ sur $A$
est bridée si le
groupe abélien ordonné $\Delta$-coloré
$\abs{\kappa(x)^\times}\cdot \Delta$ 
est bridé.

\subsubsection{}
Soit $\phi\colon A\to B$ un morphisme de $k$-algèbres
et soit $y$ une $k$-valuation $\Delta$-colorée
sur $B$. 
La flèche $A\mapsto 
\abs{\kappa(y)}\cdot \Delta, a\mapsto \abs{\phi(a)(y)}$
définit une $k$-valuation $\Delta$-colorée
sur $A$, dont la classe
d'équivalence ne dépend que de celle de $y$ ; si $y$ est bridée, 
$x$ l'est aussi.

\subsubsection{Reconstitution d'une valuation colorée}\label{reconstituer-valuation}
Soit $K$ une extension de $k$, et soit $\Gamma$
un sous-groupe de $\Delta$ l'engendrant modulo 
$\abs{k^\times}$. 
Une $k$-valuation 
$\Delta$-colorée sur $K$ se reconstitue à partir des ensembles $\{z\in K, \abs z \leq\gamma\}$ 
pour $\gamma$ parcourant $\Gamma$. Par conséquent, une valuation
$\Delta$-colorée $x$ sur un anneau $A$ se reconstitue à équivalence près 
à partir de
\[\{(f,g,\gamma)\in A^2\times \Gamma, \abs {f(x)}\leq \gamma \abs {g(x)}\},\]
par un raisonnement analogue à celui
tenu au \ref{valuation-reconstitution}. 

\subsection{Raffinements des valuations}
Soit $K$ un corps. 

\subsubsection{}
Soient $\abs \cdot $ et $\abs \cdot '$ deux valuations sur $K$. On dit que 
$\abs \cdot '$ raffine $\abs \cdot$ si l'anneau de $\abs \cdot '$ est contenu dans
celui de $\abs \cdot$. Il revient au même de demander qu'il existe un morphisme croissant 
(nécessairement unique)
$\pi \colon \abs {F^\times}'\to \abs{F^\times}$ tel que $\abs \cdot =\pi \circ \abs\cdot '$. 

\subsubsection{}
Décrivons maintenant la variante colorée de cette notion. Soit $\Delta$ un groupe abélien ordonné,
et soit $k$ un sous-corps de $K$ muni d'une $\Delta$-valuation. Soient $\abs \cdot$ et $\abs \cdot'$
deux $k$-valuations $\Delta$-colorées sur $K$. On dit que $\abs \cdot '$ raffine $\abs \cdot$ si
pour tout 
$\delta \in \Delta$, l'ensemble $\{z\in K, \abs z'\leq \delta\}$ est contenu dans
 $\{z\in K, \abs z\leq \delta\}$ ; ctte dernière condition revient à demander qu'il existe un morphisme 
$\pi \colon \abs {F^\times}'\cdot \Delta\to \abs{F^\times}\cdot \Delta$ de groupes
abéliens ordonnés $\Delta$-colorés (nécessairement unique) 
tel que $\abs \cdot =\pi \circ \abs\cdot '$.

\section{Algèbre graduée et corpoïdes résiduels}\label{def-red-graduees}

\subsection{}
Nous utiliserons dans la suite le formalisme de l'algèbre graduée
introduite par Temkin dans
\cite{temkin2004}, et plus précisément sa variante présentée à la section 1 de  \cite{ducros2021b}.
La différence 
entre notre point de vue et celui de Temkin est que pour éviter de devoir systématiquement préciser que les 
éléments manipulés sont homogènes nous choisissons d'emblée de ne considérer \textit{que} des éléments homogènes. 
Pour ce faire, au lieu de travailler avec les anneaux gradués classiques, nous utiliserons 
ce que nous appelons les \textit{annéloïdes}. 

\subsubsection{}
Si $D$ est un groupe abélien
un \textit{annéloïde $D$-gradué} est un
ensemble $A$ muni d'une décomposition comme \textit{union disjointe}
$A=\coprod_{d\in D}A^d$, 
d'une loi de groupe abélien sur chacun des $A^d$ notée additivement
et d'élément neutre $0^d$, 
et d'une loi de composition interne
$A\times A\to A$ associative, commutative et possédant un élément 
neutre $1\in A^1$, qui
induit pour tout couple $(d,d')$ d'éléments de $D$  une application 
bi-additive de $A^d\times A^{d'}$ vers $A^{dd'}$. Le degré d'un élément 
$a$ de $A$ est l'unique élément de $D$
tel que $a\in A^d$ ; il arrivera qu'on écrive $0$ au lieu de
$0^d$ si le degré en jeu est clair. 
Un morphisme d'annéloïdes
$D$-gradués $f\colon A\to B$
est une application $f$ préservant les degrés, 
commutant aux deux lois et envoyant 
$1$ un sur $1$.

\subsubsection{}
Un annéloïde (tout court) est un couple constitué
d'un groupe abélien $D$ et d'un annéloïde $D$-gradué $A$. 
Pour faire des annéloïdes une catégorie suffisamment souple, 
nous allons procéder en deux temps. Définissons tout d'abord un 
morphisme \textit{stricto sensu} entre deux annéloïdes $(D,A)$ et 
$(E,B)$ comme la donnée d'un morphisme injectif de groupes
$i\colon D\to E$ et d'une application $f$ de $A$ dans $B$ envoyant
$A^d$ sur $B^{i(d)}$ pour tout $d$, commutant aux deux lois en envoyant
$1$ sur $1$. La notion de morphisme \textit{stricto sensu}
fait des annéloïdes une catégorie $\mathsf C$, mais
ce que nous appellerons la catégorie des annéloïdes sera la
localisée de $\mathsf C$ par rapport à la classes des flèches
$(i\colon D\hookrightarrow E, f\colon A\to B)$ possédant la
propriété suivante : $f$ est injective d'image
$\coprod_{d\in D}B^{i(d)}$ et $B^e=\{0^e\}$ pour tout $e\in E
\setminus i(D$). 

Ainsi, si $E$ est un groupe abélien et $D$ un sous-groupe de $E$, 
un annéloïde $E$-gradué $B$ tel que $B^e=\{0^e\}$ pour tout
$e\in E\setminus D$ est isomorphe en tant qu'annéloïde à l'annéloïde
$D$-gradué $\coprod_{d\in D}B^d$, et un annéloïde $D$-gradué $A$
est isomorphe en tant qu'annéloïde à l'annéloïde $E$-gradué
$\coprod_{e\in E}A^e$ où l'on a posé $A^e=\{0^e\}$ si $e\notin D$. 
Dans le premier cas nous dirons simplement que nous voyons $B$
comme un annéloïde $D$-gradué, et dans le second cas que nous voyons $A$
comme un annéloïde $E$-gradué. 

Tout morphisme d'annéloïdes $(A,D)\to (B,E)$ le long d'un morphisme injectif $i\colon D\hookrightarrow E$
peut
par ce biais être vu comme un morphisme d'annéloïdes $E$-gradués : on utilise la réciproque
de l'isomorphisme $i\colon D\to i(D)$ pour voir $A$ comme un annéloïde $i(D)$-gradué, puis on 
le voit comme un annéloïde $E$-gradué en le prolongeant par $0$ comme décrit ci-dessus.

\subsubsection{}
Nombre de notions et résultats de l'algèbre commutative
admettent des avatars gradués naturels ; nous renvoyons
à la section 1 de \cite{ducros2021b}
pour davantage de détails. Nous allons simplement
évoquer ici quelques uns de ces avatars qui nous serons particulièrement utiles. Remarquons
par ailleurs qu'un annéloïde $\{1\}$-gradué
est
simplement un anneau : l'algèbre commutative graduée contient
donc l'algèbre commutative standard. 

Un \textit{corpoïde}
est un annéloïde dans lequel $1\neq 0$ et  dont tout élément non nul est inversible.

\subsubsection{}
Il y a une notion d'algèbre de polynômes dans le contexte des annéloïdes, 
décalquée de la notion classique mais avec une petite complication (pour l'essentiel indolore) : on 
doit prescrire les degrés des indéterminées. Si $(r_i)_{i\in I}$ est une famille d'éléments d'un groupe
abélien $D$
et si $A$ est un annéloïde $D$-gradué,
on dispose ainsi d'un
annéloïde $D$-gradué
$A[r_i\backslash T_i]$ qui 
au sein des $A$-algèbres $D$-graduées, 
représente
le foncteur covariant qui envoie $B$ sur l'ensemble des familles
$(b_i)$ d'éléments de $B$, chaque $b_i$ devant être de degré $r_i$. 

\subsubsection{}\label{degtrans-corpoides}
Une fois ces annéloïdes de polynômes à notre
disposition, on peut développer une théorie
de l'indépendance algébrique et du degré de transcendance dans le cadre
des extensions de corpoïdes $D$-gradués

Plus précisément
soit $D$ un groupe abélien et soit $K\hookrightarrow L$
une extension de
corpoïdes $D$-gradués. Soit $E$ un sous-groupe de $D$. 
Soit $(t_i)$ une base de transcendance de $L^E$ sur $K^E$ et soit $(\lambda_j)$ une famille
d'éléments de $L^\times$ telle que les $\deg(\lambda_i)$ constituent une base
de \[\Q\otimes_\Z (\deg (L^\times)/((E\cap \deg(L^\times)\cdot \deg (K^\times)).\] Un calcul
immédiat montre que $(t_i)$ est encore une base de transcendance de $K\cdot L^E$ sur $K$, et que 
$(\lambda_j)$ est une base de transcendance de $L$ sur $K\cdot L^E$. Par conséquent, 
la concaténation de $(t_i)$ et $(\lambda_j)$ est une base de transcendance de $L$ sur $K$. 
En appliquant ce qui précède avec $E=\{1\}$, on voit que le degré de transcendance de 
$L$ sur $K$ est égal à la somme du degré de transcendance classique de $L^1$ sur $K^1$ et du rang
rationnel de $\deg(L^\times)/\deg(K^\times)$.

\subsection{Valuations et corpoïdes résiduels}\label{hypotheses-generales}
Il y a une notion de valuation sur un corpoïde (et même sur un annéloïde, mais nous n'en aurons pas besoin ici), définie 
comme dans le cas classique : si $G$ est un groupe abélien ordonné, une valuation sur un corpoïde $F$ est une application 
$\abs \cdot F\to G\cup\{0\}$ telle que $\abs a=0$ si et seulement si $a$ est nul, telle que 
$\abs {ab}=\abs a\cdot \abs b$ pour tout $(a,b)\in F^2$, et telle que
$\abs{a+b}\leq \max(\abs  a, \abs b)$ pour tout cpoupel $(a,b)$ d'éléments
de même degré de $F$. On définit également dans ce
cadre de manière naturelle les relations d'équivalence et de raffinement entre valuations, et les pendants colorés de toutes ces
notions. 

Fixons un groupe abélien $D$, un corpoïde $D$-gradué $F$ et une valuation $\abs \cdot$ sur $F$
à valeurs dans $G\cup\{0\}$ pour un certain groupe abélien ordonné $G$. 

\subsubsection{}\label{corpoide-residuel-general}
Pour tout
couple
$(d,g)\in D\times G$
nous poserons
\[\widetilde F^{(d,g)}=\{z\in F^d, \abs z\leq g\}/\{z\in F^d, \abs z<g\}\,;\]
si $x$ est un élément de $F^\times$ on notera $\widetilde x$ son image dans
$\widetilde F^{\deg x, \abs x}$. 
Si $H$ est un sous-groupe de $D\times G$ nous poserons
\[\widetilde F^H=\coprod_{(d,g)\in H}\widetilde F^{(d,g)}.\]
S'il n'y a pas d'ambiguïté, nous écrirons simplement
$\widetilde F$ au lieu de
$\widetilde F^{D\times G}$. 
L'addition et la multiplication de
$F$ induisent une structure de
$(D\times G)$-corpoïde
sur $\widetilde F$ ; nous l'appellerons
le \textit{corpoïde résiduel} de $(F,\abs\cdot)$ 
(ou simplement de $F$ s'il n'y a pas d'ambiguïté) ; remarquons que $\widetilde F$
n'a de termes non nuls qu'en degrés appartenant à $\widetilde F^{\deg(F^\times)\times \abs{F^\times}}$. 

\begin{rema}
La construction ci-dessus sera le plus souvent utilisée lorsque $D=\{1\}$, c'est-à-dire lorsque $F$ est un corps valué. 
Sous cette  hypothèse $\widetilde F$ est simplement $\abs{F^\times}$-gradué, et $\widetilde F^1$ est le corps
résiduel ordinaire de $F$. Par exemple supposons que $F$ est un corps
muni d'une valuation discrète dont on note 
$R$ l'anneau, $\pi$
une uniformisante et $k$ le corps résiduel
$R/\pi R$.
Le $\abs \pi^\Z$-annéloïde 
$\widetilde F$ est alors $\coprod_{n\in \Z} \pi^n R/\pi^{n+1}R$, le sommande indexé
par $n$ étant de degré $\abs \pi^n$. 
En tant qu'algèbre annéloïde, c'est l'algèbre $k[\widetilde \pi,\widetilde \pi^{-1}]$ des polynômes de Laurent en $\widetilde \pi$ (qui est 
de degré $\abs \pi$, les éléments de $k$ étant en degré $1$). 

\end{rema}

\subsubsection{}\label{abhyankar}
Revenons aux hypothèses en notations de \ref{hypotheses-generales}
et \ref{corpoide-residuel-general}. 
Soit $K$ un sous-corpoïde de $F$
et soit $(\alpha_i)_i$ une famille d'éléments non nuls de $\widetilde F$, 
de degrés respectifs $(d_i,g_i)_i$. Pour tout $i$, choisissons
un élément $a_i\in F^{d_i}$ tel que $\abs{a_i}=g_i$ et $\widetilde {a_i}=\alpha_i$. 
Il résulte immédiatement des définitions que les $\alpha_i$ sont algébriquement indépendants
sur
le corpoïde $\widetilde K$ si et seulement si pour tout polynôme
$P=\sum a_I T^I$ appartenant à $K[d_i\backslash T_i]$ on a 
$\abs{P(a_i)_i)}=\max \abs {a_I}g^I$ ; si c'est le cas on voit en particulier que les
$a_i$ sont algébriquement indépendants.

Supposons que $F$ soit de degré de transcendance fini $n$ sur $K$. Par ce qui précède
on voit que le degré de transcendance de $\widetilde F$ sur
le corpoide $\widetilde K$ est
majoré par $n$ : c'est l'\emph{inégalité d'Abhyankar}. On dit que l'extension de corpoïdes valués $K\hookrightarrow F$
est d'Abhyankar si  le degré de transcendance de $\widetilde F$ sur $\widetilde K$
est égal à $n$ ; cela revient à demander qu'il existe des éléments $a_1,\ldots, a_n$ non nuls de $F$,
dont on note $d_i$ les degrés respectifs, et des éléments $g_1,\ldots, g_n$ de $G$ tels que 
pour tout polynôme
$P=\sum a_I T^I$ appartenant à $K[d_i\backslash T_i]$ on ait 
$\abs{P(a_i)_i)}=\max \abs {a_I}g^I$ ; nous dirons qu'une telle famille $(a_i)$ est une
\textit{base d'Abhyankar} de $F$ sur $K$. 

Supposons que $F$ soit une extension d'Abhyankar de $K$, et soit 
$(a_1,\ldots, a_n)$
une base d'Abhyankar de $F$ sur $K$ ; soit $F_0$ le sous-corpoïde $K(a_1,\ldots, a_n)$ de $F$. 
La famille $(a_i)$ est une base de transcendance de $F$ sur $K$, et $F$ est donc
une extension algébrique
de $F_0$, finie si $F$ est de type fini sur $K$. Les $\widetilde a_i$ forment une base de transcendance de 
$\widetilde F$ sur $\widetilde K$, et $\widetilde {F_0}
$ est simplement égal à 
$\widetilde K(\widetilde{a_1},\ldots, \widetilde{a_n})$ par
un calcul immédiat fondé sur la description 
explicite de la restriction de $\abs \cdot$ à $F_0$. 
Si $F$ est de type fini sur $K$, la finitude de $F$ sur $K_0$ entraîne la finitude de $\widetilde
F$ sur $\widetilde F_0$, si bien que $\widetilde F$ est de type fini sur $\widetilde K$. 

\subsection{Raffinements de valuations colorées}\label{raff-val-cor}
Soit $\Delta$ un groupe abélien ordonné, soit $k$ un corps
muni d'une $\Delta$-valuation, et soit $F$ une extension de $k$. 
Soit $\abs \cdot$ une $k$-valuation $\Delta$-colorée sur $F$. 
Le but de ce qui suit est d'étudier les raffinements de $\abs \cdot$ en tant que
$k$-valuation $\Delta$-colorée au moyen du corpoïde résiduel $\widetilde F$, qui est
$\abs{F^\times}$-gradué. 

\subsubsection{}
L'application de $\widetilde F^\times$ vers $\abs F\cdot \Delta$
qui envoie $x$ sur $\deg(x)$ si $x\neq 0$ et sur $0$ sinon est une valuation sur $\widetilde F$,
qu'on se permettra de noter encore $\abs \cdot$, puisqu'elle satisfait par construction 
la formule $\abs {\widetilde a}=\abs a$ pour tout $a\in F^\times$. 
Remarquons que $\abs \alpha\in \Delta$ pour tout $\alpha\in \widetilde k$ : ainsi, 
la restriction de $\abs \cdot $ à $\widetilde k$ apparaît comme une 
$\Delta$-valuation, ce qui fait de $\abs \cdot$ une $\widetilde k$-valuation 
$\Delta$-colorée sur $\widetilde F$. 

\subsubsection{}
Supposons donnée une $k$-valuation $\Delta$-colorée $\abs \cdot'$
sur $F$ qui raffine $\abs \cdot$. 
Soit $\alpha$ un élément non nul de $\widetilde F$ et soit $a$ un élément non nul
de $F$ tel que $\widetilde a=\alpha$. Il est immédiat que $\abs a'$ ne dépend que de $\alpha$
et pas de $a$. Ainsi, $\abs \cdot '$ induit une valuation 
sur 
$\widetilde F$, encore notée $\abs \cdot '$,
à valeurs dans le groupe abélien 
$\Delta$-coloré $\abs{F^\times}'\cdot \Delta$, 
caractérisée par le fait
que $\abs a'=\abs{\widetilde a}'$ pour tout $a\in F^\times$.
Par construction, $\abs \cdot'$ est une $\widetilde k$-valuation
$\Delta$-colorée sur $\widetilde F$ qui raffine $\abs \cdot$.

Réciproquement, supposons donnée une
$\widetilde k$-valuation $\Delta$-colorée
$\abs \cdot '$ sur $\widetilde F$
qui raffine $\abs \cdot$. 
La valuation $\abs \cdot '$ induit alors une valuation
$\Delta$-colorée sur $F$ à valeurs dans $\abs{\widetilde F^\times}'\cdot \Delta$, 
qui raffine $\abs \cdot$ 
et
est définie par la formule
$\abs a'=\abs{\widetilde a}'$ pour tout $a\in F^\times$. 

\subsubsection{}\label{raffinement-corpores}
Les constructions précédentes mettent en bijection l'ensemble des raffinements
(à équivalence près) de la $k$-valuation $\Delta$-colorée $\abs\cdot $
sur $F$ et l'ensemble des raffinements
(à équivalence près) de la $\widetilde k$-valuation $\Delta$-colorée $\abs\cdot $
sur $\widetilde F$. 

Cette bijection préserve les corpoïdes résiduels, dans le sens suivant. 
Soit $\abs \cdot '$ un raffinement de la $k$-valuation $\Delta$-colorée $\abs \cdot$ de $F$ ; notons encore $\abs \cdot'$ la
$\widetilde k$-valuation $\Delta$-colorée correspondante 
correspondante sur $\widetilde F$.  Soit $\dot  F$ le corpoïde résiduel de $(F,\abs \cdot '$) et soit
$\ddot F$ celui de  $(\widetilde F,\abs \cdot '$). Le corpoïde résiduel $\ddot F$ est \textit{a priori}
gradué par $\abs {F^\times}\times \abs{\widetilde F^\times}'$, mais
si $\pi$ désigne le morphisme canonique $\abs {F^\times}'\cdot \Delta\to
\abs{F^\times}\cdot \Delta$, 
le degré d'un élément
de $\ddot F^\times$ appartient toujours au groupe
$\{\pi(g), g\}_{g\in\abs{F^\times}'}$, qui s'identifie à $\abs{F^\times}'$ \textit{via}
la seconde projection. 
On peut donc voir $\ddot F$ comme un corpoïde
$\abs{F^\times}'$-gradué, et l'on dispose alors d'un isomorphisme
de corpoïdes $\abs{F^\times}'$-gradués
entre $\dot F$ et $\ddot F$
tel que le diagramme 
\[\begin{tikzcd}
F^\times\ar[r]\ar[d]&\widetilde F^\times\ar[d]\\
\dot F^\times\ar[r,"\sim"]&\ddot F^\times
\end{tikzcd}
\]
commute. 

\subsubsection{}\label{bijection-raffcor-valtoutcourt}
Soit
$\Gamma$
un sous-groupe de $\Delta$ l'engendrant modulo $\abs{k^\times}$. 

Soit $\abs \cdot '$ une $\widetilde k$-valuation $\Delta$-colorée sur $\widetilde F$
qui raffine $\abs \cdot$. 
On lui associe une valuation (tout court, c'est-à-dire non colorée)
$\langle \cdot \rangle$ sur le corpoïde
$\widetilde F^\Gamma$, triviale sur $\widetilde k^\Gamma$,
qu'on définit par la formule 
$\langle \alpha \rangle=\abs \alpha '/\abs \alpha$ pour tout $\alpha$ non nul
de $\widetilde F^\Gamma$ (rappelons que pour un tel $\alpha$ on a $\abs \alpha=\deg(\alpha)$, si
bien que $\abs \alpha\in \Gamma\subset \Delta$ et que diviser par $\abs \alpha$ dans $\abs{\widetilde F}'\cdot \Delta$
a donc un sens).

Réciproquement, soit $\langle \cdot \rangle$ une valuation sur $\widetilde F^\Gamma$, triviale
sur $\widetilde k^\Gamma$.  On vérifie, en utilisant
le fait que $\Delta=\abs{\widetilde k^\times} \cdot \Gamma$,  qu'il existe 
une unique
$\widetilde k$-valuation $\Delta$-colorée
$\abs \cdot '$ sur $\widetilde F$ tel que pour tout $\gamma \in \Gamma$
et tout $\alpha \in \widetilde F^\times$  on ait 
\[\abs \alpha'\leq \gamma\iff (\abs\alpha <\gamma)
\;\text{ou}\;(\abs\alpha =\gamma\;\text{et}\;\langle \alpha \rangle
\leq 1).\]

Il résulte de la construction que pour tout $\delta\in \Delta$ et tout $\alpha \in\widetilde F$ on a 
l'implication $\abs \alpha '\leq \delta \Rightarrow \abs \alpha \leq \delta$. Par conséquent, 
$\abs \cdot '$ raffine $\abs \cdot$. 

\subsubsection{}\label{bij-raffinement-recap}
On a ainsi mis en bijection l'ensemble des $\widetilde k$-valuations $\Delta$-colorées sur $\widetilde
F$ (à équivalence près) qui raffinent $\abs \cdot$ et l'ensemble des valuations sur 
$\widetilde F^\Gamma$ (à équivalence près) qui sont triviales sur $\widetilde k^\Gamma$. 

Compte-tenu de
\ref{raffinement-corpores}
on obtient une bijection entre l'ensemble des $k$-valuations $\Delta$-colorées
sur $F$ (à équivalence près) qui raffinent $\abs \cdot$ et l'ensemble des valuations sur 
$\widetilde F^\Gamma$ (à équivalence près) qui sont triviales sur $\widetilde k^\Gamma$. 
Si $\abs \cdot '$ est une $k$-valuation $\Delta$-colorée
sur $F$ et si $\langle \cdot \rangle$ désigne la valuation associée sur $\widetilde F^\Gamma$,
le lien entre les deux  est entièrement
décrit par 
l'équivalence \[\abs a'\leq \gamma\iff(\abs a<\gamma)\;\text{ou}\;(\abs a=\gamma\;
\text{et}\;\langle \widetilde a\rangle \leq 1)\]
valable pour tout $a\in F^\times$.

\subsection{Compléments
sur les valuations bridées}\label{subsection-valuations-bridees}
On fixe un corps ultramétrique complet $k$, ce qui veut dire que $k$ est muni d'une valuation
\textit{réelle} $\abs \cdot \colon k\to \R_{\geq 0}$ pour laquelle il est complet. On fixe également un sous-groupe
$\Gamma$ de $\R_{>0}$ tel que $\abs{k^\times}\cdot \Gamma\neq \{1\}$ : autrement dit, $\Gamma$ est non trivial
si $k$ est trivialement valué. Dans ce qui suit, nous considèrerons $\abs \cdot$ comme une $\abs k^\times\cdot \Gamma$-valuation.

\subsubsection{}
Soit $F$ une extension de $k$. 
Soit $\abs \cdot$
une
$k$-valuation $\abs{k^\times}\cdot \Gamma$-colorée
et bridée sur $F$ (comme $F$ est un corps, cette
dernière condition revient à demander 
que pour tout 
$\lambda\in F^\times$
il existe $\gamma
\in \Gamma$ tel que $\abs \lambda
\leq \gamma$).
Soit $\lambda\in \abs {F^\times}$. L'ensemble 
\[\{\alpha\in (\abs{k^\times}\cdot \Gamma)^\Q,\alpha \leq \abs\lambda\}
\subset \rpos\] est non vide et majoré, 
et il admet donc une borne supérieure $\abs \lambda^\#$
dans $\rpos$. Il est immédiat que $\abs\cdot^\#$ (étendue à $F$ en posant
$\abs 0^\#=0$) est une $\rpos$-valuation sur $F$ prolongeant la valuation structurale
de $k$ ; c'est donc une $k$-valuation $\abs{k^\times}\cdot \Gamma$-colorée et bridée sur $F$, 
dont $\abs \cdot$ est un raffinement. 
Notons $\widetilde F$ le
corpoïde résiduel de $(F,\abs \cdot^\#)$. La valuation $\abs \cdot$ induit
d'après
\ref{bij-raffinement-recap}
une valuation $\abs \cdot ^\flat$ sur $\widetilde F^\Gamma$, triviale sur $\widetilde k^\Gamma$.

Réciproquement, \ref{bij-raffinement-recap}
permet d'associer à toute valuation réelle $\abs \cdot^\#$ sur $F$ prolongeant la valuation de $k$
et toute valuation $\abs \cdot^\flat$ sur $\widetilde F^\Gamma$, triviale sur 
$\widetilde F^\Gamma$
une $k$-valuation $\abs{k^\times}\cdot \Gamma$-colorée $\abs \cdot$ sur $F$
raffinant $\abs \cdot^\#$, qui sera bridée par
construction. 

\subsubsection{}\label{raffinement-gamma}
Il découle de ce qui précède qu'il existe une bijection entre l'ensemble des 
$k$-valuations $\abs{k^\times}\cdot \Gamma$-colorées et bridées sur $F$ et celui des couples
$(\abs\cdot^\#, \abs\cdot ^\flat)$ où $\abs \cdot^\#$ est une valuation réelle
sur $F$ prolongeant la valuation de $k$ et où $\abs \cdot^\flat$ est une valuation sur
la partie $\Gamma$-graduée du corpoïde résiduel 
de
$(F,\abs \cdot^\#)$, triviale sur $\widetilde k^\Gamma$. 

Le lien entre $(\abs \cdot^\#, \abs \cdot^\flat)$ et $\abs \cdot$ est entièrement décrit par l'équivalence
\[\abs{\lambda}\leq \gamma\iff (\abs{\lambda}^\#<\gamma)\;\text{ou}\;(\abs{\lambda}^\#=\gamma\;\text{et}\;
\abs{\widetilde \lambda}^\flat\leq 1),\]
valable pour tout $\lambda \in F^\times$. 

\subsubsection{}\label{raffinement-completion}
Soit  $\abs \cdot$
une
$k$-valuation $\abs{k^\times}\cdot \Gamma$-colorée
et bridée sur $F$ et soit $(\abs \cdot^\#, \abs \cdot ^\flat)$ le couple qui
lui correspond \textit{via}
la bijection du \ref{raffinement-gamma}. Soit
$\widetilde F$ le corpoïde résiduel 
de $(F,\abs \cdot ^\#)$. 
Soit $\widehat F$ le complété de $K$ pour la valuation réelle $\abs \cdot^\#$ ; celle-ci se prolonge
canoniquement en une valuation réelle sur $\widehat F$ que l'on note encore $\abs \cdot^\#$, et dont
le corpoïde résiduel est encore $\widetilde F$. Au couple $(\abs \cdot ^\#, \abs \cdot^\flat)$ relatif 
à l'extension $\widehat F$ de $k$ correspond alors en vertu de \ref{raffinement-gamma} une 
$k$-valuation $\abs{k^\times}\cdot\Gamma$-colorée et bridée $\abs \cdot$ sur $\widehat F$, qui prolonge
la valuation $\abs \cdot $ de $F$ -- nous dirons que c'est son prolongement canonique à $\widehat F$. 
Celui-ci se décrit aisément. En effet, 
soit $\lambda$ un élément non nul de $\widehat F$, et soit $\lambda_0$ un élément de $F$ tel que
$\abs{\lambda-\lambda_0}^\#<\abs \lambda^\#$ ; il résulte alors
immédiatement des définitions que 
$\abs \lambda=\abs{\lambda_0}$. Notons qu'en conséquence $\abs{\widehat F^\times}=\abs{F^\times}$.

\subsubsection{Le cas d'une $k$-algèbre}
\label{valbrid-casgen}
Soit $A$ une $k$-algèbre. Une  $k$-valuation $(\abs{k^\times}
\cdot \Gamma)$-colorée $x$ sur $A$ est bridée si
et seulement si la valuation $(\abs{k^\times}
\cdot \Gamma)$-colorée $\abs\cdot $ 
de $\kappa(x)$ est bridée. 
Si c'est le cas, il lui correspond un couple $(\abs \cdot^\#, \abs \cdot^\flat)$ par la
bijection de \ref{raffinement-gamma}. Le complété de $\kappa(x)$ pour la valuation réelle
$\abs \cdot^\#$ sera noté $\hr x$, et le prolongement canonique
de $\abs \cdot$ à $\hr x$ sera encore noté $\abs \cdot$
(\ref{raffinement-completion}). On désignera par $\hrt x$ le 
corpoïde résiduel de $\hrt x$ relatif
à sa valuation réelle $\abs \cdot^\sharp$. 

L'application $a\mapsto \abs{a(x)}^\#$ est une $k$-valuation $(\abs{k^\times}\cdot \Gamma)$-colorée
et bridée sur $A$, que nous noterons $x^\#$. Avec nos conventions on a donc $\abs{a(x)}^\#
=\abs{a(x^\#)}$. Quant à la valuation résiduelle $\abs \cdot ^\flat$, nous la verrons comme un point
de l'espace de Zariski-Riemann $\P_{\hrt {x^\#}^\Gamma/\widetilde k^\Gamma}$ des valuations sur le corpoïde 
$\hrt {x^\#}^\Gamma$ qui sont triviales sur $\widetilde k^\Gamma$, et nous la noterons $x^\flat$. Si $a$ est un élément
de $A$ tel que $\abs{a(x^\#)}\in \Gamma$, la notation $\abs{\widetilde{a(x^\#)}(x^\flat)}$ a donc un sens. 
Comme elle est passablement indigeste, nous l'abrègerons en $\abs{a(x^\flat)}$.

\section{Espaces domaniaux et géométrie linéaire par morceaux}
Le but de cette section est de décrire
la théorie des espaces
linéaires par morceaux abstraits dont nous nous servirons ici.
Nous avons adopté (suivant en cela Berkovich)
le point de vue \textit{multiplicatif}, consistant à voir $\rpos$ comme un espace vectoriel réel, 
sa loi de groupe étant la multiplication, et sa multiplication externe l'exponentiation ; cela
nous permettra d'éviter une profusion de symboles $\exp$ et $\log$ qui alourdiraient les notations
sans aucune utilité mathématique. 

Nous avons par ailleurs choisi de faire une présentation faisceautique de la théorie, 
qui nous 
a
\textit{in fine}
a semblé plus adaptée à la définition et l'étude des squelettes
que la présentation plus traditionnelle par cartes
et atlas, développée par Berkovich dans \cite{berkovich2004} et reprise dans
\cite{ducros2012b} ; cette présentation est en partie inspirée par le formalisme
des «espaces tropicaux» développé par Antoine Chambert-Loir et le premier auteur
dans leur travail en cours \cite{chambertloir-d-2012}. 

\subsection{Les espaces domaniaux}
Avant d'en venir aux espaces linéaires par morceaux proprement dite nous allons 
introduire une notion plus générale, celle d'\textit{espace domanial enrichi}, qui s'avèrera 
très commode pour décrire les relations entre la géométrie de Berkovich et la géométrie
linéaire par morceaux, qui sont au cœur de la
théorie des squelettes.

\subsubsection{Brefs rappels de topologie générale}
Rappelons qu'une application continue $f\colon Y\to X$ est dit \textit{propre}
si et seulement si elle est universellement fermée, c'est-à-dire si et seulement si pour tout espace topologique
$Z$ et toute application continue $Z\to Y$, l'application continue induite $Y\times_X Z\to Z$ est fermée. 
On démontre que $f$ est propre si et seulement si $f$
est fermée à fibres \textit{quasi-compactes} (nous ne faisons aucune hypothèse de séparation \textit{a priori}). 

Si $f$ est propre $f^{-1}(K)$ est quasi-compact pour toute partie quasi-compacte $K$ de $X$. La réciproque est vraie lorsque $X$
est localement compact (rappelons qu'un espace localement compact est \textit{par définition} séparé). 

Si $X$ possède une base de voisinages compacts (sans être nécessairement séparé), l'application $f$ est propre et séparée si et seulement si
$f^{-1}(K)$ est compacte pour toute partie compacte $K$ de $X$.

\subsubsection{}\label{def-gtopo}
Si $E$ est une partie d'un espace
topologique $X$, on dit qu'une famille $(E_i)$ de sous-ensembles de $E$ est un
\textit{G-recouvrement} de $E$ si tout point $x$ de $E$ possède un voisinage \textit{dans $E$}
de la forme $\bigcup_{i\in I} E_i$ où $I$ est un ensemble fini d'indices tels que $x\in \bigcap_{i\in I}E_i$. 
Donnons deux exemples importants de G-recouvrements : 

\begin{itemize}[label=$\diamond$]
\item si $U$ est un ouvert de $X$, tout recouvrement ouvert de $U$ en est un G-recouvrement ; 
\item si $(F_i)$ est une famille localement finie de fermés de $X$ et si l'on pose
$F=\bigcup_i F_i$ alors
$F$ est un fermé de $X$ dont les $F_i$ constituent un G-recouvrement. 
\end{itemize}

\begin{defi}\label{def-espace-domanial}
Un \textit{espace domanial} est un espace topologique $X$
dont tout point possède une base de voisinages compacts
et qui est muni d'un ensemble de parties, les \textit{domaines}
de $X$, vérifiant les propriétés suivantes : 
\begin{enumerate}[a]
\item tout ouvert de $X$ est un domaine ; 
\item l'intersection d'une famille finie de domaines de $X$ est un domaine de $X$ ; 
\item tout domaine de $X$ est G-recouvert par des domaines
compacts ; 
\item toute partie de $X$ qui est G-recouverte par des domaines de 
$X$ est un domaine de $X$.
\end{enumerate}
\end{defi}

\subsubsection{}\label{sorites-domaines}
Soit $X$ un espace domanial. 
Soit 
$V$ un domaine de $X$.
Choisissons un recouvrement $(U_i)$ de $X$
par des ouverts séparés. Chacun des $U_i\cap V$ est un domaine de $X$, donc est G-recouvert 
par des domaines compacts de $X$, qui ont fermés dans l'espace séparé $U_i$. Il s'ensuit que
chacun des $V\cap U_i$ est localement fermé dans $U_i$ puis que $V$ est localement fermé dans $X$. 

Le domaine $V$ hérite d'une structure naturelle d'espace domanial : ses domaines sont simplement les domaines
de $X$ contenus dans $V$. 

Si $x$ est un point de $X$, un \textit{voisinage domanial} de $x$ est simplement
un voisinage de $x$
qui est aussi un domaine de $X$. 
Si $U$ est un voisinage ouvert de $x$, il existe un voisinage ouvert séparé $U'$ de $x$ dans $U$, 
et un G-recouvrement de $U'$ par des domaines compacts ; en particulier, il existe une famille finie $(V_i)$ de domaines compacts
de $X$ contenus dans $U'$ et contenant $x$ dont la réunion $V$ est un voisinage
de $x$ dans $U'$. Comme $U'$ est séparé les $V_i$ sont
fermés dans $U'$ et $V$ est donc un domaine compact de $X$. 
Ainsi tout point de $X$ possède-t-il une base de voisinages domaniaux compacts. 

On fait de la catégorie des domaines de $X$ (le flèches étant les inclusion) un site
en décrétant que les familles couvrantes sont les G-recouvrements ; on l'appelle le site domanial, 
et sa topologie est appelée la topologie domaniale, ou encore la G-topologie
de $X$. 

\subsubsection{}
On fait de la classe des espaces domaniaux une catégorie, 
en définissant un morphisme d'espaces domaniaux 
de $Y$ vers $X$ 
comme
une application continue $f\colon Y\to X$ telle que $f^{-1}(V)$ soit un domaine de $Y$ pour tout domaine $V$ de
$X$. 
Un tel morphisme induit un morphismes entre les sites domaniaux de $Y$ et $X$. 

\subsubsection{}\label{locferm-domanial}
Soit $X$ un espace domanial et soit $Z$
une partie localement fermée de $X$. 
L'ensemble $Z$ hérite d'une structure  domaniale naturelle, dite induite, pour laquelle
une partie $T$ de $Z$ est un domaine si et seulement si $T$ est G-recouverte par des parties de la forme $V\cap Z$
où $V$ est un domaine de $X$. Lorsque $Z$ est un domaine de $X$, cette structure domaniale coïncide avec celle 
de \ref{sorites-domaines}. 
L'inclusion de $Z$ dans $X$ est un morphisme d'espaces domaniaux
et $Z\hookrightarrow X$ est l'objet final de la catégorie
des espaces domaniaux munis d'un morphisme vers $X$
dont l'image ensembliste est contenue dans Z. 

On déduit de cette 
caractérisation fonctorielle que si $T$ est une partie 
localement fermée de $Z$, les structures domaniales
sur $T$ héritées de $X$ et de $Z$ coïncident. 

\begin{defi}
Un espace domanial \textit{enrichi} est un 
espace domanial $X$ muni
d'un faisceau de fonctions continues
à valeurs dans $\rpos$ sur son site domanial. 

Un morphisme d'espaces domaniaux enrichis $f\colon (Y,\Lambda_Y)\to (X,\Lambda_X)$ est 
un morphisme d'espaces domaniaux $f\colon Y\to X$ tel que 
pour tout domaine $U$ de $X$, pour tout domaine $V$ de $f^{-1}(U)$
et toute $\phi\in \Lambda_X(U)$, la fonction $\phi\circ (f_{|V})$ appartienne à
$\Lambda_Y(V)$. 
\end{defi}

\subsubsection{Commentaires et premières propriétés}
\label{locferm-enrichi}
Si $(X,\Lambda)$ est un espace domanial
enrichi, tout
domaine $V$ de $X$ peut être vu naturellement comme
un espace domanial
enrichi, \textit{via} $\Lambda_{|V}$. Plus
généralement, soit $Z$ une partie localement fermée
de $X$ munie de sa structure domaniale induite 
(\ref{locferm-domanial}) et soit 
$i$ l'inclusion de $Z$ dans $X$. 
Pour tout domaine $T$ de $Z$ notons $\Lambda_Z(T)$ l'ensemble
des
applications continues de $T$ vers $\rpos$ qui sont G-localement de la forme
$u_{|V\cap T}$ où $V$ est un domaine de $X$ et $u$
un élément de $\Lambda(V)$ ; il est immédiat que $\Lambda_Z$
est un faisceau sur le site domanial de $Z$, faisant 
de ce dernier un espace domanial enrichi ; nous dirons
que
la structure d'espace domanial enrichi ainsi construite sur $Z$
est héritée de celle de $X$. Par construction, 
$i$ est 
un morphisme d'espaces domaniaux enrichis 
de $(Z,\Lambda_Z)$ vers $(X,\Lambda)$, et $i\colon
Z\hookrightarrow X$ est l'objet final de la catégorie
des espaces domaniaux enrichis
munis d'un morphisme vers $X$
dont l'image ensembliste est contenue dans $Z$. 

On déduit de cette 
caractérisation fonctorielle que si $T$ est une partie 
localement fermée de $Z$, les structures domaniales
enrichies sur $T$ héritées de $X$ et de $Z$ coïncident.

\subsection{Espaces linéaires par morceaux}
\label{pl-abstrait}
Nous allons maintenant donner un premier
example majeur d'espaces domaniaux enrichis. 
On fixe un couple $c=(K,\Delta)$ où $K$ est un
sous-corps de $\R$ et $\Delta$ un sous-$K$-espace
vectoriel de
$\rpos$. 
Ce couple représente les paramètres avec lesquels nos espaces linéaires
par morceaux seront définis : $K$ est l'ensemble
des exposants possibles de nos applications 
monomiales (c'est-à-dire affines, avec notre point de vue multiplicatif) 
et $\Delta$ celui de leurs termes
constants.  Dans les applications que nous avons en vue, $K$ sera toujours égal
à $\Q$.

\subsubsection{Les $c$-polytopes}\label{pl-plonge}
Une application $\phi$
de $\rposn n$
vers $\rposn m$
est dite $c$-affine si chacune
de ses composantes
est de la forme 
\[(x_1,\ldots, x_n)\mapsto a x_1^{e_1}\ldots x_n^{e_n}\]
où les $e_i$ appartiennent à $K$ et $a$ à $\Delta$. 
Un \textit{$c$-polytope} de $\rposn n$ est une partie 
de $\rpos^n$ 
qui est définie par une condition de la forme
\[\bigvee_i \bigwedge_j
\phi_{ij}\leq 1\] où les $\phi_{ij}$ sont des
applications $c$-affines
de $\rposn n$ vers $\rpos$.

\subsubsection{}Une
\textit{$c$-cellule}
de $\rposn n$ est un $c$-polytope de ce dernier
qui est non vide et peut être défini par une conjonction 
finie 
d'inégalités $\bigwedge_i \phi_i\leq 1$, où les $\phi_i$
sont $c$-affines. (On peut démontrer que cela revient
à demander qu'il soit convexe et non vide). 

Le \textit{bord} $\partial C$ d'une $c$-cellule $C$ 
est son bord topologique dans le sous-espace
affine $\langle C\rangle$ ; 
son \textit{intérieur} $\mathring C$ 
est son intérieur topologique
dans $\langle
C\rangle$. 
Si $C$ est décrite 
par une conjonction $\bigwedge_i \phi_i\leq 1$
comme ci-dessus, $\partial C$ est le $c$-polytope
décrit par la condition 
\[\bigvee_{j\in J}\left((\phi_{j}=1)
\wedge  \bigwedge_{i\neq j}\phi_i\leq 1\right),\] où $J$
est l'ensemble des indices $j$ tels que $\phi_j$
ne soit pas identiquement égale à $1$ sur $P$. 

Une \textit{$c$-cellule ouverte} de $\rposn n$ est une
partie de la forme $\mathring C$ où $C$ est
une $c$-cellule, qui est uniquement déterminée
(c'est l'adhérence de $\mathring C$). 

\begin{rema}\label{rem-im-cellule}
Soient $n$ et $m$ deux entiers, soit $\phi$ une application $c$-affine de $\rposn n$ vers $\rposn m$
et soit $C$ une $c$-cellule de $\rposn n$. L'image $\phi(C)$ est une $c$-cellule de $\rposn m$ (par des changements
convenables de coordonnées on se ramène au cas d'une projection, et l'on procède alors par un calcul explicite) ; 
si $\phi$ est injective, elle induit un
isomorphisme affine de $\langle C\rangle $ sur $\langle \phi(C)\rangle$, et en particulier
un homéomorphisme de $C$ sur $\phi(C)$. 
\end{rema}

\subsubsection{Décompositions
$c$-cellulaires des polytopes.}\label{decomp-cell}
Si $P$ est un $c$-polytope quelconque de $\rposn n$, il possède
une \textit{décomposition $c$-cellulaire finie}
$\mathscr C$, c'est-à-dire un ensemble fini de $c$-cellules
contenues dans $P$, qui recouvrent $P$, et telles que : 
\begin{itemize}[label=$\diamond$]
\item $\partial C$ est pour toute $C\in \mathscr C$
une union finie d'éléments
de $\mathscr C$ ; 
\item si $C$ et $D$ appartiennent à $\mathscr C$ alors
$C\cap D$ est une union finie d'éléments de $\mathscr C$ ; 
\item si $C$ et $D$ appartiennent à $\mathscr C$ et si 
$C\subset D$ alors $C=D$ ou $C\subset \partial D$. 
\end{itemize}

Plus précisément, donnons-nous un ensemble fini
$\mathscr P$ de $c$-polytopes de $\rposn n$. 
Choisissons une famille finie $(\psi_i)$ de fonctions $c$-affines sur
$\rposn n$ telle que chacun des $c$-polytopes appartenant
à $\mathscr P$ soit décrit par une conjonction et disjonction d'inégalités
de la forme $\psi_i\leq 1$ pour un certain $i$ et soit $\mathscr C$ la décomposition 
cellulaire de $\rposn n$ constituée  des $c$-cellules
décrites par une conjonction de la forme
\[\bigwedge_i\psi_i\Join_i1\] où
$\Join_i\in\{\leq, \geq, =\}$
(nous dirons que c'est la décomposition cellulaire  induite par  la famille
$(\psi_i)$). 
Chaque polytope de $\mathscr P$
est alors réunion de cellules appartenant à $\mathscr C$.

Si $Q$ est une partie de $\rposn n$ qui s'écrit comme combinaison booléenne
de $c$-polytopes appartenant à $\mathscr P$ il est alors immédiat que l'on peut
écrire  $Q=\coprod_{C\in \mathscr D}\mathring C$ pour un certain sous-ensemble
$\mathscr D$ de $\mathscr C$.

\subsubsection{Espaces $c$-linéaires par morceaux : le cas plongé}
Une partie de $\rposn n$ est dite $c$-linéaire par morceaux
si elle s'écrit comme une réunion localement finie de $c$-polytopes. 
Les parties $c$-linéaires par morceaux de $\rposn n$ satisfont les axiomes 
(a), (b), (c) et (d) de la définition \ref{def-espace-domanial}, ce qui veut dire qu'elles
définissent une structure d'espace domanial sur $\rposn n$, dite $c$-linéaire
par morceaux, dont elles sont 
précisément les domaines. 

Si $P$ est une partie $c$-linéaire par morceaux de $\rposn n$,  nous dirons qu'une application 
$\phi \colon P\to \rpos$ est $c$-linéaire par morceaux s'il existe un G-recouvrement $(P_i)$ de $P$
par des $c$-polytopes tel que $\phi_{|P_i}$ soit $c$-affine pour tout $i$ ; notons que si c'est le cas, $\phi$ 
est continue puisque cette propriété se teste G-localement.

Nous noterons $\Lambda_c(P)$ l'ensemble des applications
$c$-linéaires par morceaux de $P$ vers $\rpos$. 
Si $Q$ est une partie $c$-linéaire par morceaux de $\rposn n$ contenue dans $P$, 
alors $\phi|_{|Q}\in \Lambda_c(Q)$ pour toute $\phi\in \Lambda_c(P)$, et l'appartenance
à $\Lambda_c(P)$ d'une application de $P$ vers $\rpos$ est par définition une propriété G-locale. 
Il s'ensuit que $P\mapsto \Lambda_c(P)$ est un faisceau d'applications continues à valeurs dans $\rpos$ 
pour la G-topologie de $\rposn n$, qui fait dès lors de ce dernier un espace domanial enrichi.

La dimension d'une partie $c$-linéaire par morceaux $P$ de
de 
$\rposn n$ est $\sup \dim \langle C\rangle$, où
$C$ parcourt l'ensemble des $c$-cellules
contenues dans $P$ (pour ces questions
de dimension, on optera pour la convention
$\sup\emptyset=-\infty$). On peut se limiter à calculer le supremum
sur un G-recouvrement donné de $P$ par des $c$-cellules
(par exemple sur une décomposition $c$-cellulaire donnée si $P$ est un polytope).

\begin{defi}[Espaces $c$-linéaires
par morceaux abstraits]
Un \textit{espace $c$-linéaire par morceaux}
est un espace domanial enrichi $X$ possédant un G-recouvrement $(X_i)$ par des domaines tels
que pour tout $i$, il existe un entier $n_i$, 
une partie $c$-linéaire par morceaux $P_i$ de $\rposn{n_i}$
et un isomorphisme $X_i\simeq P_i$ d'espaces $c$-linéaires domaniaux enrichis. 

Un morphisme d'espaces $c$-linéaires par morceaux est un morphisme d'espaces domaniaux
enrichis. 

Si $X$ est un espace $c$-linéaire par morceaux, les
domaines de $X$ seront appelés \textit{sous-espaces $c$-linéaires par morceaux} de 
$X$, son faisceau structural sera noté $\Lambda_c$, et ses sections seront appelées
applications $c$-linéaires par morceau. Cette terminologie et cette notation 
sont compatibles avec celles déjà introduites au \ref{pl-plonge}
dans le cas plongé. 
\end{defi}

\begin{exem}
Soit $n$ un entier et soit $P$ une partie linéaire par morceaux de $\rposn n$. Muni de sa structure
d'espace domanial enrichi définie en \ref{pl-plonge}, $P$ est par construction un espace $c$-linéaire
par morceaux. 

Soit $m$ un entier et soit $Q$ une partie $c$-linéaire par morceaux de $\rposn m$. Un morphisme
d'espaces $c$-linéaires par morceaux de $P$ vers $Q$ est
une application de $P$ vers $Q\subset \rposn m$ dont toutes
les composantes sont $c$-linéaires par morceaux. 
\end{exem}

\subsubsection{Dimension d'un espace
$c$-linéaire par morceaux}

\label{plongeable}
Soit $X$ un espace $c$-linéaire par morceaux. 
Convenons de dire que  $X$ est \textit{plongeable}
s'il est isomorphe à un sous-espace $c$-linéaire 
par morceaux $P$ de $\rposn n$  pour un certain $n$. 
Si c'est le cas $\dim P$ ne dépend
alors
que de $X$ et sera appelé la dimension de $X$. 

En général, on définit la dimension de
$X$ comme
le supremum des dimensions de ses sous-espaces
linéaires par morceaux plongeables ;
ce supremum vaut $-\infty$ si $X$ est vide, 
appartient à $\Z_{\geq 0}\cup\{+\infty\}$
sinon, et il est fini dès
que $X$ est compact et non vide; 
il peut par ailleurs être calculé
par rapport
à une famille donnée de sous-espaces $c$-linéaires
par morceaux plongeables
qui G-recouvre $X$. Cette définition est 
compatible avec la précédente si $X$ est plongeable. 

Si $x$ est un point de $X$ on définit la dimension de $X$
en $x$,  notée $\dim_x X$, comme le minimum des $\dim V$ où 
$V$ est un voisinage $c$-linéaire par morceaux compact de $x$. 
La dimension de $X$ est égale au supremum des $\dim_x X$ 
pour $x$ parcourant $X$. On dira que
$X$ est \textit{purement de dimension $n$}
(pour un certain entier $n$) si $\dim_xX=n$ pour
tout $x\in X$.

\subsubsection{Extension des paramètres de défintion}\label{pl-extension-parametres}
Soit $X$ un espace $c$-linéaire par morceaux.
Dans la notation du faisceau structural $\Lambda_c$ de $X$, le 
$c$ en indice rappelle
quels sont les paramètres avec lesquels sont définis les polytopes (multiplicatifs) sur lesquels 
tout est modelé. Mais il est possible d'élargir l'ensemble des paramètres autorisés. 

Plus précisément, donnons-nous un couple $d=(L,\Upsilon)$ où $L$ est un sous-corps de $\R$
contenant $K$ et $\Upsilon$ un sous-$L$-espace vectoriel de $\rpos$ contenant $\Delta$. 
Pour tout $n$, toute partie $c$-linéaire par morceaux $P$ de $\rposn n$ est \textit{a fortiori} $d$-linéaire par morceaux, 
et $\Lambda_c(P)\subset \Lambda_d(P)$. 

Ces considérations plongées admettent la déclinaison abstraite que voici. Soit $\mathscr E$ l'ensemble des parties $V$ de $X$ possédant la propriété suivante : il existe une partie
$c$-linéaire par morceaux $U$ de $X$ contenant $V$, un entier $n$ et un isomorphisme d'espaces $c$-linéaires par morceaux 
$\iota \colon U\simeq P$ où $P$ est une partie $c$-linéaire par morceaux de $\rposn n$, tels que
$\iota(V)$ soit une partie $d$-linéaire par morceaux
de $\rposn n$. Si $V\in \mathscr E$ et si $\iota$ est comme ci-dessus, 
l'ensemble des applications de $V$ dans $\rpos$ de la forme $f\circ \iota$ ou $f\in \Lambda_d(\iota(V))$ ne dépend que de $V$, 
et sera noté $\Lambda_d(V)$. 
On définit alors comme suit une nouvelle structure domaniale enrichie sur $X$, dont le faisceau 
structural est noté $\Lambda_d$ : 
\begin{itemize}[label=$\diamond$]
\item ses domaines sont les parties de $X$ qui sont $G$-recouvertes par des éléments de $\mathscr E$ ; 
\item si $V$ est un domaine au sens ci-dessus, $\Lambda_d(V)$ est l'ensemble des applications $f$ de $V$
vers $\rpos$ telles que $f_{|W}\in \Lambda_d(W)$ pour toute partie $W$ de $V$ appartenant à $\mathscr E$. 
\end{itemize}
Cette structure domaniale enrichie fait de $X$ un espace $d$-linéaire par morceaux, que l'on dira déduit de l'espace
$c$-linéaire par morceaux initial par extension des paramètres.

\begin{lemm}
\label{c-lin-dom}
Soit $X$un espace $c$-linéaire par morceaux et soit
$Z$ une partie localement fermée de $X$, munie 
de sa structure domaniale enrichie héritée de celle de $X$
(\ref{locferm-enrichi}). Les assertions suivantes
sont équivalentes : 
\begin{enumerate}[i]
\item le sous-ensemble $Z$ 
de $X$ en est une partie $c$-linéaire par
morceaux ; 
\item l'espace domanial 
enrichi $Z$ est un espace $c$-linéaire par
morceaux. 
\end{enumerate}
\end{lemm}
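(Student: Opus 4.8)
L'approche sera de se ramener au cas plongé, puis de traiter séparément les deux implications. On observera d'abord que les propriétés (i) et (ii) sont toutes deux G-locales sur $X$ — (i) en vertu de l'axiome (d) de la définition \ref{def-espace-domanial}, (ii) de par la définition même des espaces $c$-linéaires par morceaux abstraits — et que les structures domaniales enrichies induites sont compatibles aux emboîtements de parties localement fermées (\ref{locferm-enrichi}). En choisissant un G-recouvrement de $X$ par des domaines isomorphes à des parties $c$-linéaires par morceaux plongées, on pourra donc supposer que $X$ est une partie $c$-linéaire par morceaux $P$ de $\rposn n$ pour un certain $n$ ; alors $Z\subset P\subset \rposn n$ est localement fermée dans $\rposn n$, et il est équivalent que $Z$ soit une partie $c$-linéaire par morceaux de $P$ ou de $\rposn n$. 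Dans ce cadre, l'implication (i)$\Rightarrow$(ii) sera immédiate : une partie $c$-linéaire par morceaux de $\rposn n$, munie de la structure domaniale enrichie de \ref{pl-plonge}, est par construction un espace $c$-linéaire par morceaux, et cette structure coïncide avec celle héritée de $P$ (\ref{locferm-enrichi}).

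Reste à prouver (ii)$\Rightarrow$(i), qui est le cœur du lemme. Supposons donc que l'espace domanial enrichi $Z$ soit un espace $c$-linéaire par morceaux ; l'idée sera de tirer parti des fonctions coordonnées de $\rposn n$ comme prise sur $Z$. En raffinant un G-recouvrement définissant la structure de $Z$ au moyen de l'axiome (c) de la définition \ref{def-espace-domanial} et de \ref{sorites-domaines}, on se donnera un G-recouvrement $(W_\beta)$ de $Z$ par des domaines compacts, chaque $W_\beta$ étant isomorphe comme espace domanial enrichi à une partie $c$-linéaire par morceaux compacte $R_\beta$ d'un $\rposn{m_\beta}$ convenable. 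Pour chaque $\beta$, les restrictions à $W_\beta$ des fonctions coordonnées $T_1,\ldots, T_n$ de $\rposn n$ — qui sont $c$-affines, donc appartiennent à $\Lambda_c(\rposn n)$ — appartiennent au faisceau structural de $W_\beta$ muni de la structure induite depuis $\rposn n$, laquelle est aussi celle induite depuis $Z$ d'après \ref{locferm-enrichi} ; elles se transportent \textit{via} l'isomorphisme $W_\beta\simeq R_\beta$ en des applications $c$-linéaires par morceaux $h_1,\ldots, h_n$ sur $R_\beta$.

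Le point délicat sera ensuite de passer de l'information que chaque $h_i$ est G-localement $c$-affine sur $R_\beta$ à l'existence d'une décomposition $c$-cellulaire \emph{finie} $\mathscr C$ de $R_\beta$ sur chacune des cellules de laquelle \emph{toutes} les $h_i$ sont $c$-affines : c'est là qu'interviendront de manière essentielle la compacité de $R_\beta$ (qui permet d'extraire, pour chaque $i$, un recouvrement fini de $R_\beta$ par des $c$-polytopes sur lesquels $h_i$ est $c$-affine) et la construction de décomposition cellulaire de \ref{decomp-cell}, appliquée d'un coup à la réunion sur $i$ de ces familles finies de $c$-polytopes ainsi qu'aux fonctions $c$-affines associées. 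Une fois $\mathscr C$ obtenue, pour chaque cellule $C\in\mathscr C$ contenue dans $R_\beta$ l'application $(h_1,\ldots, h_n)|_C\colon C\to \rposn n$ est $c$-affine, donc son image est une $c$-cellule de $\rposn n$ (remarque \ref{rem-im-cellule}) ; comme $(h_1,\ldots, h_n)$ n'est autre que l'inverse de l'isomorphisme $W_\beta\simeq R_\beta$ suivi de l'inclusion $W_\beta\hookrightarrow \rposn n$, on en conclura que $W_\beta$, vue comme partie de $\rposn n$, est une réunion finie de $c$-cellules, donc une partie $c$-linéaire par morceaux de $\rposn n$ (et \textit{a fortiori} de $P$). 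L'axiome (d) de la définition \ref{def-espace-domanial}, appliqué à la structure $c$-linéaire par morceaux de $\rposn n$, donnera alors que $Z$, qui est G-recouverte par les $W_\beta$, en est une partie $c$-linéaire par morceaux, ce qui achèvera la démonstration.
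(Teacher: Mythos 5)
Votre preuve est correcte, mais elle emprunte une route réellement différente de celle du texte. Après la réduction (légitime) au cas plongé $X=P\subset\rposn n$, vous réalisez chaque carte compacte $W_\beta$ de $Z$ comme \emph{image} du polytope compact $R_\beta$ par l'application $c$-linéaire par morceaux $(h_1,\ldots,h_n)$ obtenue en transportant les coordonnées ambiantes à travers la carte, et vous concluez au moyen d'une décomposition cellulaire finie et de la remarque \ref{rem-im-cellule} ; c'est en substance un cas particulier de l'assertion (1) du lemme \ref{image-pl-propre} (image d'un morphisme propre injectif), que le texte établit juste après par le même argument cellulaire. Le texte procède au contraire sans se ramener au cas plongé : il observe que les isomorphismes de cartes $\phi_i\colon V_i\cap Z\simeq P_i$ sont, après raffinement du recouvrement, induits par des restrictions de fonctions $c$-linéaires par morceaux définies sur des domaines $V_i$ de $X$, adjoint à ces fonctions les coordonnées d'un plongement $\psi_i$ de $V_i$ de sorte que $\phi_i$ devienne la restriction de $\psi_i$, et écrit $V_i\cap Z=\psi_i^{-1}(P_i)$ comme une \emph{image réciproque}. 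L'argument par image réciproque ne requiert ni compacité des morceaux ni lemme d'image ; le vôtre exige la compacité des $W_\beta$ (disponible d'après \ref{sorites-domaines}) et la finitude de la décomposition cellulaire qu'elle procure. Les deux démonstrations reposent sur le même point clef, à savoir que les sections du faisceau structural hérité par $Z$ sont G-localement des restrictions de fonctions $c$-linéaires par morceaux sur $X$.
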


\begin{proof}
L'implication (i)$\Rightarrow$(ii) est claire. 
Supposons que (ii) soit vérifiée. Par définition, il existe
un G-recouvrement de $Z$ de la forme $(V_i\cap Z)_i$, 
où chaque $V_i$ est une partie $c$-linéaire par morceaux de
$X$ et, pour tout $i$, un isomorphisme d'espaces
domaniaux enrichis
$\phi_i\colon (V_i\cap Z)\simeq P_i$ où $P_i$ est
une partie $c$-linéaire
par morceaux de $\rposn{n_i}$ pour un certain 
$n_i$. De plus,
quitte à raffiner le recouvrement, on peut supposer
que pour
tout indice $i$, il
existe un isomorphisme $c$-linéaire par
morceaux
$\psi_i\colon V_i \simeq Q_i$ où $Q_i$ est
une partie $c$-linéaire
par morceaux de $\rposn{m_i}$ pour un certain 
$m_i$, et que l'isomorphisme 
$\phi_i$ est induit par la restrictions 
à $Z\cap V_i$ 
de $n_i$ fonctions $c$-linéaires par morceaux sur $V_i$ ; 
en adjoignant
à ces dernières les $m_i$ fonctions qui définissent
$\psi_i$, 
on peut finalement
supposer que $\phi_i$ est induit par
la restriction de $\psi_i$ à 
$Z\cap V_i$. Dans ce cas
$P_i$ est contenu dans $Q_i$ et 
$V_i\cap Z=\psi_i^{-1}(P_i)$,
si bien que $V_i\cap Z$ est une partie
$c$-linéaire par morceaux de $V_i$, donc de $X$. 
Comme ceci vaut pour tout $i$, le sous-ensemble $Z$
de $X$ en est bien une partie $c$-linéaire par morceaux. 
\end{proof}

\begin{lemm}\label{image-pl-propre}
Soit $f\colon Y\to X$ un morphisme propre (comme application continue)
d'espaces $c$-linéaires par morceaux. 

\begin{enumerate}[1]
\item L'image $f(Y)$ est un sous-espace $c$-linéaire par morceaux (nécessairement 
fermé) de $X$. 
\item Si de plus $f$ est injective, $f$ induit un isomorphisme de $Y$ sur $f(Y)$. 
\end{enumerate}
\end{lemm}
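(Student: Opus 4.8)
The plan is to reduce to the embedded (plongé) case and then argue locally on the target, using the cellular decomposition machinery of \ref{decomp-cell} together with Remark \ref{rem-im-cellule}. First I would observe that the assertions are G-local on $X$ (the image $f(Y)$, being closed by properness, can be tested to be $c$-linéaire par morceaux G-locally on $X$ by axiom (d) of \ref{def-espace-domanial}, and likewise the isomorphism statement in (2) can be checked G-locally), so after replacing $X$ by a compact domaine $V$ that embeds in some $\rposn m$ and $Y$ by $f^{-1}(V)$ — which is compact since $f$ is propre — I may assume $X$ is a compact $c$-linéaire par morceaux subset of $\rposn m$. Since $Y$ is then compact, it has a finite G-recouvrement by domaines each isomorphic to a $c$-polytope $P_\alpha$ in some $\rposn{n_\alpha}$; refining, I can arrange that on each piece the composite $Y_\alpha \hookrightarrow Y \xrightarrow{f} X \subset \rposn m$ is given by $c$-affine coordinate functions on $P_\alpha$, hence (decomposing $P_\alpha$ cellularly via \ref{decomp-cell}) by a finite family of $c$-cellules $C$ on each of which $f$ restricts to a $c$-affine map. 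By Remark \ref{rem-im-cellule}, $f(C)$ is a $c$-cellule of $\rposn m$, so $f(Y)$ is a finite union of $c$-cellules, hence a $c$-polytope (in particular a $c$-linéaire par morceaux subset) of $\rposn m$; combined with the G-local reduction this gives (1).

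For (2), assume in addition $f$ is injective. Working again in the reduced embedded situation above, on each $c$-cellule $C$ appearing in the decomposition the restriction $f|_C$ is an injective $c$-affine map, so by the second half of Remark \ref{rem-im-cellule} it is a $c$-linéaire par morceaux isomorphism onto $f(C)$, with inverse the restriction of a $c$-affine map. The point is then to glue these local inverses: since $f$ is a continuous bijection from the compact space $Y$ onto $f(Y)$, it is a homeomorphism, so the set-theoretic inverse $g\colon f(Y)\to Y$ is continuous; and on each $f(C)$ it agrees with a $c$-linéaire par morceaux map, these being the images of a finite G-recouvrement of $Y$ by the $c$-cellules $C$ (hence their images G-recouvrent $f(Y)$, using that $f$ is a homeomorphism so carries the G-recouvrement of $Y$ to one of $f(Y)$). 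Since membership in $\Lambda_c$ is a G-local property, $g$ is a morphism of espaces $c$-linéaires par morceaux, and $f$, $g$ are mutually inverse morphisms, proving (2).

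The main obstacle I anticipate is bookkeeping rather than conceptual: one must be careful that the G-local reductions are legitimate, i.e.\ that "$f(Y)$ is $c$-linéaire par morceaux" and "$f$ is an isomorphism onto its image" really are G-local on $X$ in the precise sense of \ref{def-gtopo} and \ref{def-espace-domanial}(d), and — more delicately — that passing to the compact domaine $V\subset X$ does not lose information, which is where properness of $f$ is essential (it guarantees $f^{-1}(V)$ compact, hence finitely G-covered by polytope charts). A secondary subtlety is ensuring that after refining the charts of $Y$ one can simultaneously arrange $f$ to be $c$-affine on each cellule \emph{and} that the images $f(C)$ still G-recouvrent $f(Y)$; this uses that a homeomorphism transports G-recouvrements to G-recouvrements, together with \ref{rem-im-cellule} applied cellule by cellule. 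No genuinely hard estimate or new idea is needed beyond \ref{rem-im-cellule} and the cellular decomposition of \ref{decomp-cell}.
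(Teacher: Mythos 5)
Your proposal is correct and follows essentially the same route as the paper: G-localization on $X$ to reduce to the compact embedded case, a finite cover of the (quasi-)compact source by polytopal charts refined so that $f$ is $c$-affine on each cellule, and Remark \ref{rem-im-cellule} to conclude for both the image and, in the injective case, the isomorphism onto the image (the paper phrases your gluing of local inverses as reasoning G-locally simultaneously on $Y$ and on $X=f(Y)$). The only microscopic imprecision is that properness a priori gives $f^{-1}(V)$ quasi-compact rather than compact, but your immediate passage to a finite cover by compact polytope charts is exactly the paper's fix, so there is no gap.
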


\begin{proof}
Les deux assertions sont G-locales sur $X$, ce qui permet de le supposer compact. 
L'espace $Y$ est alors
quasi-compact. 

Montrons (1). Comme $Y$ est quasi-compact, il est réunion d'un nombre fini de domaines compacts. 
Il suffit alors de montrer que l'image de chacun de ces domaines est une partie
$c$-linéaire par morceaux de $C$, ce qui permet de se ramener au cas où $Y$ est également compact. 
En raisonnant à nouveau G-localement sur $X$ et sur $Y$, on se ramène ensuite au cas où tous
deux sont plongeables et l'on conclut à l'aide de la remarque \ref{rem-im-cellule}. 

Montrons maintenant (2). Au vu de (1) l'image $f(Y)$ est une partie $c$-linéaire par morceaux compacte
de $X$ ; quitte à remplacer $X$ par $f(Y)$ on peut supposer que $f$ est surjective. Tout
G-recouvrement fini $(Y_i)$ de $Y$ par des 
parties $c$-linéaires par morceaux compactes induit alors un recouvrement $(f(Y_i))$ de $X$,
et les $f(Y_i)$ sont $c$-linéaires par morceaux d'après (1), et compacts.
Ceci permet de raisonner G-localement sur $Y$
aussi bien que sur $X$, et partant de supposer que $X$ est un $c$-polytope compact de $\rposn m$ et $Y$
une $c$-cellule compacte de $\rposn n$ (pour $n$ et $m$ convenables) et que $f$ est $c$-affine. 
On conclut alors à l'aide une fois encore de la remarque  \ref{rem-im-cellule}.
\end{proof}

\subsection{Quotient d'un espace
$c$-linéaire par morceaux sous l'action 
d'un groupe fini}

Nous nous proposons maintenant de démontrer que le quotient d'un 
espace $c$-linéaire par morceaux (séparé) $X$ sous l'action d'un groupe fini $G$
a une structure $c$-linéaire par morceaux naturelle, qui en fait un quotient
catégorique de $X$ par $G$ (théorème \ref{quotient-fini-pl}). 
Pour ce faire nous établirons tout d'abord une variante 
nettement plus faible de ce résultat (lemme \ref{quotient-definissable}), qui dit
essentiellement que $X/G$ est définissable dans le langage des $K$-espaces
vectoriels ordonnés à paramètres dans $\Delta$.

\subsubsection{}\label{notation-dx}
Commençons par introduire quelques notations. 
Soit $n$ un entier. Nous noterons $\dom {\rposn n} $ la
sous-algèbre de Boole de $\mathscr P(\rposn n)$ 
engendrée par les $c$-polytopes ; remarquons
que les éléments compacts de $\dom{\rposn n}$
sont précisément les $c$-polytopes compacts 
de $\rposn n$ (on peut le voir
en considérant une décompositon cellulaire ouverte
de $P$, \textit{cf.} \ref{decomp-cell}). 
Si
$P\in \dom {\rposn n}$, nous désignerons
par $\dom P$ la sous-algèbre de Boole
$\mathscr P(P)\cap \dom{\rposn n}$ de
$\mathscr P(P)$. 
Lorsque $P$ est un $c$-polytope compact, 
$\dom P$ est aussi la sous-algèbre de Boole
de $\mathscr P(P)$ engendrée par les 
$c$-polytopes compacts contenus dans $P$. 

Nous emploierons également la notation 
$\dom X$ lorsque $X$ est un espace $c$-linéaire
par morceaux abstrait \textit{compact}, pour
désigner la sous-algèbre de Boole
de $\mathscr P(X)$ engendrée par les sous-espaces
$c$-linéaires par morceaux compacts de $X$.

\begin{lemm}\label{quotient-definissable}
Soit $X$ un espace 
 $c$-linéaire par morceaux compact et soit $G$ un groupe
fini agissant sur $X$ par automorphismes $c$-linéaires par morceaux. 
Il existe une partition $\mathsf P$ de $X$ en éléments de $\dom X$ possédant les propriétés suivantes  : 
\begin{enumerate}[a]
\item pour tout $V\in \mathsf P$ et tout $g\in G$ l'image $g(V)$ appartient
à $\mathsf P$ ; 
\item pour tout $V\in \mathsf P$ et tout $g\in G$ ou bien $g(v)=v$ pour tout $v\in V$ ou bien 
$V\cap g(V)=\emptyset$ ; 
\item pour tout $V\in \mathsf P$ il existe une partie
$c$-linéaire par morceaux compacte $V'$ 
de $X$ contenant $V$ et un isomorphisme
$f_V$ de $V'$ sur un $c$-polytope compact de
$\rposn {n_V}$ (pour un certain $n_V$) qui identifie
$V$ à une cellule ouverte de $\rposn {n_V}$ ; 
\item tout $V\in \mathsf P$ de dimension 
égale à $\dim X$ est un ouvert de $X$. 
\end{enumerate}

\end{lemm}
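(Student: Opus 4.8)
The plan is to exhibit $\mathsf P$ as the set of open cells of a $G$-equivariant $c$-cellular decomposition of $X$, produced by feeding into \ref{decomp-cell} a finite family of $c$-affine functions chosen large enough to force (a)--(d) simultaneously. We first reduce to an embedded situation in which $G$ acts $c$-affinely. This is clear when $X$ is embeddable -- the case of interest, e.g. for skeletons: from any embedding $\iota\colon X\hookrightarrow\rposn n$ one forms $\Phi=(\iota\circ g)_{g\in G}\colon X\to\prod_{g\in G}\rposn n=\rposn{n\abs G}$, a morphism of $c$-linéaire par morceaux spaces that is injective (its component at the identity of $G$ is $\iota$) and proper (since $X$ is compact), hence identifies $X$ with a compact $c$-linéaire par morceaux subset of $\rposn{n\abs G}$ by \ref{image-pl-propre}; and $\Phi$ is $G$-equivariant once $G$ is made to act on $\rposn{n\abs G}$ by the regular representation, which permutes the $\abs G$ coordinate blocks and is therefore $c$-affine. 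We may thus assume $X\subset\rposn m$ is compact and that $G$ acts on $X$ through $c$-affine automorphisms of $\rposn m$; for a general (non-embeddable) $X$ one runs the construction below chart by chart over a finite $G$-stable covering of $X$ by embeddable compact domains and glues, feeding in at each stage also the components of the transition maps and the cells already built on the charts treated before.

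Choose a finite family $(\psi_l)$ of $c$-affine functions on $\rposn m$, stable under precomposition by every $g\in G$, such that $X$ is a Boolean combination of the half-spaces $\{\psi_l\leq1\}$ and such that for every coordinate $x_\nu$ of $\rposn m$ and every $g\in G$ the function $x_\nu\cdot(x_\nu\circ g)^{-1}$ -- which is $c$-affine since $g$ is -- occurs among the $\psi_l$. Let $\mathscr C$ be the $c$-cellular decomposition of $\rposn m$ induced by $(\psi_l)$ as in \ref{decomp-cell}, and let $\mathscr D$ be the set of cells of $\mathscr C$ contained in $X$, so that $X=\coprod_{C\in\mathscr D}\mathring C$. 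Since $(\psi_l)$ is $G$-stable, $g$ carries the cell cut out by a system $\bigwedge_l\psi_l\Join_l1$ onto the one cut out by the system obtained from it through the permutation of indices induced by $\psi_l\mapsto\psi_l\circ g^{-1}$; hence $G$ permutes $\mathscr C$, and also $\mathscr D$ because $X$ is $G$-invariant. Set $\mathsf P=\{\mathring C:C\in\mathscr D\}$; it is a partition of $X$, and each $\mathring C=C\setminus\partial C$ is a Boolean combination of $c$-polytopes contained in $X$, so $\mathsf P$ consists of elements of $\dom X$.

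Property (a) is the $G$-stability of $\mathscr D$. For (c): a cell $C\in\mathscr D$ is closed (cut out by non-strict conditions) and contained in the compact set $X$, hence is a compact $c$-polytope and a compact $c$-linéaire par morceaux subspace of $X$; taking $V'=C$ and $f_V$ the inclusion $C\hookrightarrow\rposn m$ identifies $V=\mathring C$ with the open cell of the $c$-cellule $C$. For (d): assume $\dim\mathring C=\dim X$ and pick $y\in\mathring C$. If $C'\in\mathscr D$ contains $y$, then it contains the whole open cell $\mathring C$ (the open cells of $\mathscr D$ partition $C'$), hence contains $C=\overline{\mathring C}$; by the axioms of a $c$-cellular decomposition this forces $C'=C$, since $C\subset\partial C'$ is impossible ($\dim\partial C'<\dim C'\leq\dim X=\dim C$). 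Therefore the union of the cells of $\mathscr D$ missing $y$ is a compact set not containing $y$, its complement $O$ in $X$ is an open neighbourhood of $y$ contained in $C$, and $O\cap\mathring C$ is then an open neighbourhood of $y$ in $X$ contained in $\mathring C$. So $\mathring C$ is open in $X$.

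Finally (b): fix $V=\mathring C\in\mathsf P$ and $g\in G$. Then $g(V)$ is the relative interior of $g(C)$, again a cell of $\mathscr D$, so $g(V)\in\mathsf P$ and $V\cap g(V)=\emptyset$ unless $g(C)=C$, in which case $g(V)=V$. Suppose $g(C)=C$. For every coordinate $x_\nu$ the function $x_\nu\cdot(x_\nu\circ g)^{-1}$ is one of the $\psi_l$, hence on the open cell $V$ it has a fixed strict relation with $1$ or is identically $1$; were it, say, $<1$ on $V$ for some $\nu$, then picking $z\in V$ and iterating -- each $g^kz$ lies in $g(V)=V$ -- we would obtain $x_\nu(z)<x_\nu(gz)<\cdots<x_\nu(g^{\mathrm{ord}(g)}z)=x_\nu(z)$, which is absurd. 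Hence $x_\nu(gz)=x_\nu(z)$ for all $\nu$ and all $z\in V$, i.e. $g$ fixes $V$ pointwise. This establishes (b). The real difficulty lies in the construction of the decomposition itself -- making it at once adapted to $X$, globally $G$-stable, and (in the non-embeddable case) compatible with all charts along their overlaps; once the right finite family $(\psi_l)$ is assembled, all the verifications are immediate from \ref{decomp-cell}.
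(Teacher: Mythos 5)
Your argument in the embeddable case is correct and, there, genuinely slicker than the paper's: the equivariant embedding $\Phi=(\iota\circ g)_{g\in G}$ followed by the block-permutation action linearises $G$ (the paper never does this), after which a single $G$-stable family $(\psi_l)$ containing the functions $x_\nu\cdot(x_\nu\circ g)^{-1}$ produces all four properties from one cellular decomposition; in particular your monotonicity/iteration argument for (b) is a nice replacement for the paper's orbit bookkeeping, and your verifications of (a), (c), (d) are sound.

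The gap is the general case. The lemma is stated for an arbitrary compact $c$-linéaire par morceaux space $X$, which is only required to admit a finite G-covering by embeddable compact domains; nothing guarantees that $X$ itself is plongeable au sens de \ref{plongeable}, and this is exactly the situation in which the lemma is used (in the proof of the théorème \ref{quotient-fini-pl}, $X$ is a union $\bigcup_{g\in G}g(V)$ of translates of a compact PL neighbourhood, or a skeleton stable under a Galois action, neither of which is known to embed). Your one-sentence fix --- run the construction chart by chart and glue --- is not a proof, because every ingredient of your construction is global: the family $(\psi_l)$ lives on a single ambient $\rposn m$, the auxiliary functions $x_\nu\cdot(x_\nu\circ g)^{-1}$ only make sense when $g$ acts $c$-affinely on that ambient space, and an element $g\in G$ will in general carry points of one chart into another, so the dichotomy (b) cannot be verified chart by chart. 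The paper's proof is designed precisely to get around this: it disjointifies a finite atlas into a single definable subset $E$ of $\rposn n$ via a bijection $f$ which is \emph{not} a PL isomorphism but only respects the Boolean algebras $\dom X\to\dom E$, transports the $G$-action to a merely definable (non-affine, non-continuous) action on $E$, and then uses the lexicographic order to select orbit representatives --- the sets $F_{m,\phi}$ recording the orbit size and the position of each $g(x)$ in its orbit --- from which (b) follows by an orbit-minimality argument. To complete your proof you would either have to show that every compact $c$-PL space is embeddable (a statement not available in the paper and not to be expected in this generality), or reproduce something like this definable disjointification.
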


\begin{proof}
Commençons par choisir un G-recouvrement fini de $X$ par des sous-espaces $c$-linéaires 
par morceaux compacts et plongeables.
En considérant des combinaisons booléennes convenables des éléments
de ce recouvrement, on obtient une première partition $\mathsf Q$ de $X$ en éléments de $\dom X$
qui possède la propriété suivante : pour tout $V\in \mathsf Q$, il existe
une partie $c$-linéaire par morceaux compacte $V'$ de $X$
contenant $V$
et un isomorphisme $c$-linéaire par morceaux $f_V$ entre $V'$ et une partie $c$-linéaire par morceaux compacte
de $\rposn {n_V}$ pour un certain $n_V$ ; l'image $f_V(V)$ appartient alors
à $\dom{g_V(V')}$. De plus, on peut toujours supposer, quitte à agrandir les $n_V$ et à composer les $f_V$ avec des translations convenables
(par des vecteurs dont les composantes appartiennent à $\Delta$) que les $n_V$ sont tous égaux à un même entier $n$, et les $f_V(V)$ 
deux à deux disjoints. La concaténation $f:=\coprod_{V\in \mathsf Q}(f_V)_{|V}$ définit alors une bijection de $X$ sur
un certain $E\in \dom{\rposn n}$. 
De plus il résulte de la construction de $f$ que 
si $Y$ est une partie de 
$X$ alors $Y\in \dom X$ si et seulement si $f(Y)$ appartient à $\dom E$. 

L'action de $G$ sur $X$ en induit une \textit{via} $f$ sur $E$, par bijections définissables dans le langage des
$K$-espaces vectoriels ordonnés, 
à paramètres dans $\Delta$. On munit $\rposn n$ de l'ordre lexicographique. Soit $F$ le sous-ensemble de $E$ formé
des point $x$ qui sont minimaux au sein de leur orbite sous $G$ ; 
le sous-ensemble $F$ de $E$ appartient à $\dom E$. 
Si $m$ est un entier, on note
$F_m$ l'ensemble des points de $F$ dont l'orbite contient exactement $m$ éléments ; 
et
pour toute application 
ensembliste $\phi \colon G\to \{1,\ldots, m\}$, 
on désigne par $F_{m,\phi}$ le sous-ensemble de $F_m$ formé des points $x$ tels que 
$g(x)$ soit
pour tout $g$ le $\phi(g)$-ième point de l'orbite de $X$
(toujours pour l'ordre lexicographique
sur $\rposn n$). 
L'ensemble des $F_{m,\phi}$ non vides constitue une partition finie 
de $F$ en éléments de $\dom E$.
Fixons $m$ et $\phi$ tels que $F_{m,\phi}$
soit non vide. Il existe alors
une partition 
finie $\mathsf P_{m,\phi}$ de 
$F_{m,\phi}$ en $c$-cellules ouvertes telles
que pour toute $C\in \mathsf P_{m,\phi}$
les propriétés suivantes soient satisfaites : 

\begin{enumerate}[a] 
\item pour tout $g\in G$ la restriction de $g$ à $C$
est $c$-affine, et il existe $V\in \mathsf Q$ telle que
$g(C)\subset f_V(V')$ ; 
\item si $\dim C=\dim X$ alors $f^{-1}(C)$ est un ouvert
de $X$. 
\end{enumerate}
En effet, il est facile en vertu de
\ref{decomp-cell}
de construire une première partition $\mathsf R_{m,\phi}$
de $F_{m,\phi}$ 
satisfaisant
(a). 
Considérons une cellule ouverte $C$
de $\mathsf R_{m,\phi}$. 
L'image réciproque $f^{-1}(\overline C)$
est alors une partie $c$-linéaire par morceaux
compacte de $X$, et son  bord topologique
dans $X$ est de ce fait contenu
dans une partie $c$-linéaire par morceaux
de dimension $\dim X-1$
(on le vérifie $G$-localement sur $X$, 
ce qui permet de se ramener au cas où
ce dernier est un $c$-polytope compact de
$\rposn m$ pour un certain $m$, dans lequel c'est clair). Il s'ensuit
(là encore à l'aide de \ref{decomp-cell})
que si $C$
est une cellule 
de dimension $\dim X$
de $\mathsf R_{m,\phi}$ elle possède elle-même
une partition finie $\mathsf S_C$ en cellules ouvertes
telles que pour toute $D\in \mathsf S_C$ de dimension 
$\dim X$ l'image réciproque $f^{-1}(D)$
soit ouverte dans $X$. 
Il en résulte qu'on peut raffiner la partition 
$\mathsf R_{m,\phi}$ pour obtenir une partition 
$\mathsf P_{m,\phi}$ de $F_{m,\phi}$ possédant les propriétés requises.

Soit alors $\mathsf P$ l'ensemble des parties $V$ de $X$
possédant la propriété suivante : il existe $m$ et $\phi$
tels que $F_{m,\phi}$ soit non vide, il existe $A$
appartenant à $\mathsf P_{m,\phi}$ et $g$ appartenant
à $G$ tels que $f(V)=g(A)$. Par construction, $\mathsf P$
est une partition de $X$ répondant aux conditions de
l'énoncé. 
\end{proof}

\begin{theo}\label{quotient-fini-pl}
Soit $X$ un espace $c$-linéaire par morceaux
et soit $G$ un groupe
fini agissant sur $X$ par automorphismes $c$-linéaires par morceaux. On suppose que toute orbite
de $G$ sur $X$ est contenue dans un ouvert
séparé de $X$. 
Soit $p$
l'application quotient
de $X$ vers $X/G$ 
et soit $\mathscr V$ l'ensemble
des parties $c$-linéaires par morceaux compactes
de $X$ telles que $p_{|V}$ soit injective
et telles que $\bigcup_{g\in G}g(V)$ soit séparé. 

\begin{enumerate}[A]
\item Les éléments de $\mathscr V$
constituent un G-recouvrement de $X$. 
\item Soit $\mathscr D$ l'ensemble des parties 
$V$ de $X/G$ telles que $p^{-1}(V)$ soit un domaine de
$X$. Pour tout $V\in \mathscr D$, soit $\Lambda_c(V)$
l'ensemble des applications $u$ de $V$ vers $\rpos$ telles
que $u\circ p\in \Lambda_c(p^{-1}(V))$. La donnée
de $\mathscr D$ et de $V\mapsto \Lambda_c(V)$ fait du
quotient topologique 
$X/G$ un espace domanial enrichi,
qui est 
$c$-linéaire par morceaux et s'identifie \textit{via}
$p$ 
au quotient catégorique de $X$ par $G$
dans la catégorie
des espaces domaniaux enrichis.

\item La structure d'espace domanial enrichi sur 
$X/G$ définie ci-dessus est la seule
telle que $p$ induise un isomorphisme 
de $V$ sur un domaine de $X/G$ pour tout
$V\in \mathscr V$. C'est aussi la seule qui fasse de $p$
un morphisme d'espaces $c$-linéaires par morceaux. 
\end{enumerate}
\end{theo}

\begin{proof}
Nous allons commencer par
(A). Il suffit de démontrer que 
$X$ possède un G-recouvrement par des parties
$c$-linéaires par morceaux $G$-invariantes et satisfaisant
elles-mêmes (A). 
Pour tout $x\in X$
l'orbite $Gx$ est contenue dans un ouvert séparé
$U$, et même dans un ouvert séparé $G$-invariant 
(considérer $\bigcap_{g\in G}g(U)$), ce qui permet 
de se ramener au cas où $X$ est séparé. 
Tout point $x$ de $X$ possède alors
un voisinage
$c$-linéaire par morceaux compact et $G$-invariant : il
suffit en effet de considérer
$\bigcup_{g\in V}g(V)$ où
$V$ est un voisinage $c$-linéaire par
morceaux compact
arbitraire de $X$. Cela
permet de se ramener au cas où
$X$ est compact et non vide ; soit 
$d$ sa dimension. On raisonne par récurrence forte sur $d$, 
et l'on suppose donc (A)
vraie pour tous
les
espaces $c$-linéaires
par morceaux compacts de dimension $<d$. 
Choisissons une partition $\mathsf P$ de $X$
satisfaisant les conclusions du lemme
\ref{quotient-definissable}. 
Notons $\mathsf P_d$ (resp. $\mathsf P_{<d}$)
l'ensemble des éléments $V$ de
$\mathsf P$ qui sont de dimension $d$ (resp. $<d$). 

Posons $Y=\bigcup_{V\in \mathsf P_{<d}}
V$. Le complémentaire de $Y$ dans $X$ est égal à
$\bigcup_{V\in \mathsf P_d}V$, qui est ouvert
d'après nos hypothèses sur $\mathsf P$. 
Par conséquent $Y$ est fermé, et peut donc se
récrire  $Y=\bigcup_{V\in \mathsf P_{<d}}
\overline V$. Or, là encore en vertu de nos hypothèses
sur $\mathsf P$, l'adhérence $\overline V$ est pour
tout $V\in \mathsf P$ une
partie $c$-linéaire par morceaux
compacte de $X$, 
qui est isomorphe à une $c$-cellule fermée
de dimension 
$\dim V$. Il s'ensuit que $Y$ est une partie
$c$-linéaire par morceaux compacte de $X$ ;
par construction, $Y$ est de dimension
$<d$ et est stable sous $G$. D'après l'hypothèse de récurrence, $Y$
possède un recouvrement fini par des éléments
de $\mathscr V$. 
Il suffit dès lors
pour conclure de démontrer que pour tout 
$V\in \mathsf P_d$, l'adhérence $\overline V$ possède
un recouvrement fini par des éléments
de $\mathscr V$. 

Fixons $V\in \mathsf P_d$. Par choix de 
$\mathsf P$, il existe une partie $c$-linéaire
par morceaux compacte $V'$ de $X$ contenant $V$ et un 
isomorphisme $f$ de $V'$ sur un $c$-polytope
compact de $\rposn n$ (pour un certain $n$), tel que
$f(V)$ soit une $c$-cellule ouverte $C$. 
Le bord $\partial V$ de $V$ dans $X$ est alors
égal à $f^{-1}(\partial C)$ et est en conséquence
une partie $c$-linéaire par morceaux compacte de $Y$ ; 
il existe donc un recouvrement fini 
$(W_i)$ de $\partial V$ par des éléments
de $\mathscr V$. Posons $D_i=f(W_i)$ 
pour tout $i$. 

Toujours d'après les propriétés 
satisfaites par la partition
$\mathsf P$, 
si $g$ est un élément de $G$ alors ou bien $g(V)\cap
V=\emptyset$, ou bien $g(v)=v$ pour tout $v\in V$. 
Par conséquent, $p_{|V}$ est injectif. Et puisque 
$\partial V$ est contenu dans $Y$ qui est stable sous $G$, 
on ne peut avoir $p(w)=p(v)$ pour un couple
$(v,w)\in V\times \partial V$. Il suffit donc
pour conclure d'exhiber un recouvrement fini
$(V_j)$ de $V$ par des parties $c$-linéaires
par morceaux compactes telles que 
pour tout $j$ l'on ait $V_j\cap \partial V
\subset W_{i(j)}$ pour un certain entier $i(j)$. 
En utilisant $f$, on voit qu'il suffit d'exhiber
un recouvrement fini de $\overline C$ par des $c$-cellules
compactes $C_j$ telles que pour 
tout $j$ l'on ait $C_j\cap \partial C
\subset D_{i(j)}$ pour un certain entier $i(j)$. 
Pour ce faire, on peut raffiner $(D_j)$ et donc supposer que $(D_j)$ est
une décomposition $c$-cellulaire de $\partial C$, 
puis ne conserver que les $D_j$
de dimension $d-1$ (qui recouvrent 
$\partial C$). 
Pour toute paire $\{a,b\}$ 
d'indices
distincts on choisit une forme $c$-affine $\phi_{ab}$ tel que l'hyperplan d'équation $\phi=1$ sépare 
$D_a$ et $D_b$, et ne contienne aucune de
ces deux $c$-cellules. 
Soit $\mathscr C$ la décomposition $c$-cellulaire de $\rposn n$ 
définie par la famille $(\phi_{ab})_{a,b}$ et soit $\mathscr C'$ l'ensemble
des parties de la forme $P\cap C$ avec $P\in \mathscr C$. Par construction $C=\bigcup_{Q\in \mathscr C'}
Q$, et pour tout $Q\in \mathscr C'$ il existe $j$
tel que $Q\cap \partial C\subset D_j$, ce qui achève la démonstration de (A). 

Montrons maintenant (B). On a vu au début de la preuve
de (A) que tout point de $X$ possède un voisinage
ouvert $G$-invariant, et, au sein de ce voisinage, un 
second voisinage $c$-linéaire par morceaux compact 
lui aussi $G$-invariant. Il s'ensuit
aussitôt que 
$\mathscr D$ fait de l'espace topologique quotient
$X/G$ un espace
domanial, et il est immédiat que 
$V\mapsto \Lambda_v(V)$ est un faisceau
pour la G-topologie sur $X/G$ ; par conséquent, 
$(X/G,\mathscr D, \Lambda_c)$ est un espace domanial
enrichi, et il résulte des définitions que 
$p\colon X\to X/G$ est un morphisme $G$-invariant
d'espaces domaniaux enrichis. Si $\phi$
est un morphisme $G$-invariant de $X$ vers un espace
domanial enrichi $Z$, il se factorise
de manière unique par une application 
continue de $X/G$ vers $Z$, qui est un morphisme
d'espaces domaniaux enrichis par
définition de la structure domaniale enrichie sur 
$X/G$. Par conséquent $p$ fait bien de $X/G$
le quotient catégorique de $X$ par $G$
dans la catégorie des espaces domaniaux enrichis. 

Soit $V$ appartenant à $\mathscr V$. 
L'image réciproque
$p^{-1}(p(V))=\bigcup_{g\in G}g(V)$ est alors
une partie $c$-linéaire par morceaux de
$X$, si bien que $p(V)$ est un domaine 
compact de $X/G$ ; puisque
$p_{|V}$ est injective et que $V$
est compact, $p$ induit un homéomorphisme 
de $V$ sur $p(V)$. 
Soit $E$ une partie de $p(V)$. 
Pour que $E$ soit un domaine de $p(V)$ (c'est-à-dire
de $X/G$), il faut et il suffit que 
$p^{-1}(E)\cap V$ soit un domaine de $V$. Cette 
condition est en effet clairement nécessaire. 
Vérifions qu'elle est suffisante. 
Supposons donc que 
$p^{-1}(E)\cap V$ est un domaine de $V$. 
Il possède alors un G-recouvrement par des domaines
compacts $E_i$ de $V$. Comme $p$ induit un homéomorphisme
de $V$ sur $p(V)$, les $p(E_i)$ forment un G-recouvrement 
de $p(E)$. De plus on a pour tout $i$ 
l'égalité $p^{-1}(p(E_i))=\bigcup_g g(E_i)$, si bien que
$p^{-1}(p(E_i))$ est un domaine compact de $X$. En conséquence
chacun des $p(E_i)$ est un domaine compact de $X/G$, et $p(E)$
est en conséquence lui-même un domaine compact de $X/G$.

Soit $E$ un domaine
de $p(V)$ et soit $u$ une
application de
$E$ vers
$\rpos$. Posons
$F=p^{-1}(E)\cap V$ 
et $v=u\circ p_{|F}$. 
L'application
$u$
appartient à
$\Lambda_c(E)$
si et seulement si $v$
appartient à $\Lambda_c(F)$. 
C'est en effet clairement nécessaire. 
Supposons maintenant que $v$
appartient à $\Lambda_c(F)$
et montrons que $u\in \Lambda_c(E)$. 
On peut raisonner G-localement sur $E$, donc le supposer compact. 
Dans ce cas $p^{-1}(E)$ est G-recouvert
par les domaines $\bigcup_{g\in G}
g(F)$, et pour tout $g$ 
la restriction de $u\circ p$ à $g(F)$ 
et égale à $v\circ g^{-1}$, et appartient donc
à $\Lambda_c(g(F))$. Il s'ensuit
que $u\circ p$ appartient à
$\Lambda_c(p^{-1}(E))$, et donc
que $u\in \Lambda_c(E)$. 

Par conséquent, $p$ induit un isomorphisme
d'espaces domaniaux enrichis de $V$ sur $p(V)$ ; 
en particulier, $p(V)$ est un espace
$c$-linéaire par morceaux. 

Enfin comme tout point de $X$ possède, comme
on l'a vu au début de la preuve de (A), un voisinage
$c$-linénaire par morceaux compact et $G$-invariant
qui est une union finie d'éléments de $\mathscr V$, les
domaines de $X/G$ de la forme $p(V)$
avec $V\in \mathscr V$ constituent un G-recouvrement
de $X/G$. Par conséquent ce dernier est 
$c$-linéaire par morceaux, ce qui
achève de montrer (B) ; et sa structure
domaniale enrichie est entièrement
déterminée par celle des $p(V)$ pour $V\in \mathscr V$,
ce qui montre la première
assertion de (C) ; la seconde en découle au vu de l'assertion  (2) du lemme 
\ref{image-pl-propre}.
\end{proof}

\section{Géométrie analytique : généralités}
Comme en \ref{subsection-valuations-bridees}, nous désignons par $k$
un corps ultramétrique complet et par $\Gamma$ un sous-groupe
de $\rpos$ tel que $\Gamma\cdot \abs{k^\times}\neq\{1\}$ ; nous utiliserons les conventions de
\ref{def-red-graduees} en matière d'algèbre commutative graduée. 

\subsection{Rappels et conventions}
Nous travaillerons avec
la théorie des espaces $k$-analytiques \textit{au sens de Berkovich}, et plus
précisément de l'article \cite{berkovich1993}.

\subsubsection{}
Nous utiliserons librement dans ce contexte la théorie de la dimension 
(voir \cite{ducros2007}), celle des composantes irréductibles (\cite{ducros2009}, section 4), 
tout ce qui a trait aux propriétés usuelles d'algèbre commutative
(\cite{ducros2009}, section 3 et \cite{ducros2018}, chapitre 2), et la théorie de la platitude
(\cite{ducros2018}, notamment le chapitre 4).

Si $X$ est un espace $k$-analytique et si $x$ est un point de
$X$, le corps résiduel complété de $x$ sera noté $\hr x$, et $\hrt x$
désignera son corpoïde résiduel (qui est $\rpos$-gradué).
Pour tout morphisme
$f\colon Y\to X$ entre espaces $k$-analytiques, la fibre de $f$ en $x$ pourra être notée
$f^{-1}(x)$ aussi bien que $Y_x$ ; c'est un espace $\hr x$-analytique. La dimension locale
$\dim_y Y_x$ sera également notée $\dim_y f$ et appelée dimension de $f$ en $y$, ou dimension relative de
$Y$ sur $X$ en $y$. 

Nous noterons $d_k(x)$ le degré de transcendance de $\hrt x $ sur $\widetilde k$, qui
peut également se décrire en termes plus classiques comme la somme du degré de transcendance résiduel (au sens
usuel) de $\hr x$ sur $k$ et du rang rationnel de $\abs{\hr x^\times}/\abs{k^\times}$. L'intérêt de cet invariant
est l'égalité $\dim X=\sup_{x\in X}d_k(x)$. 

\subsubsection{}
Nous nous servirons également de la notion d'espace $k$-analytique
\textit{$\Gamma$-strict} introduite au chapitre 3 de \cite{ducros2018}. Informellement, 
un espace est $\Gamma$-strict s'il admet une description dont tous les paramètres réels
appartiennent à $\Gamma$. Ainsi tout espace $k$-analytique est $\rpos$-strict, et si la valuation
de $k$ n'est pas triviale, un espace $k$-analytique est $\{1\}$-strict si et seulement s'il est strict. 
(La condition que $\abs{k^\times}\cdot \Gamma\neq \{1\}$ sert à garantir que tout point
d'un espace $k$-affinoïde $\Gamma$-strict a une base de voisinages affinoïdes $\Gamma$-stricts). 

\subsubsection{}
Nous utiliserons librement la variante $\Gamma$-graduée de la théorie
des réductions de germes d'espaces analytiques due à Temkin 
lorsque $\Gamma=\rpos$ (
\cite{temkin2004} et \cite{ducros2018}, chapitre 3). 
Indiquons simplement ici que si $(X,x)$ est un germe
d'espace $k$-analytique $\Gamma$-strict et séparé,
sa réduction $\widetilde{(X,x)}^\Gamma$ est un ouvert quasi-compact 
et non vide de $\P_{\hrt x^\Gamma/\widetilde k^\Gamma}$
(rappelons que cette dernière notation désigne l'espace des valuations
sur $\hrt x^\Gamma$ qui sont triviales sur $\widetilde k^\Gamma$).

\subsection{La structure domaniale
sur un espace $\Gamma$-strict}

\subsubsection{}\label{domrat-approximation}
Si $X$ est un espace affinoïde $\Gamma$-strict nous dirons qu'un domaine affinoïde
$V$ de $X$
est \textit{$\Gamma$-rationnel} si $V$ peut être défini par une condition de la forme 
\[\abs{f_1}\leq \lambda_1 \abs g\;\text{et}\ldots\;\text{et}\;\abs{f_n}\leq \lambda_n \abs g\] où $g$ et
les $f_i$ sont des
fonctions analytiques sur $X$ sans zéro commun sur $X$ et où les $\lambda_i$ sont des éléments de $\Gamma$.

Supposons que ce soit le cas. Le fait que les $f_i$
et $g$ ne s'annulent pas simultanément sur $X$
assure que $g|_V$ est inversible. Mais cela entraîne aussi, par
compacité de $X$, que si $\epsilon$ est un élément suffisamment
petit de $\rpos$
l'ensemble des points $x$ de $X$ tels
que $\abs{g(x)}\leq \epsilon $ et  $\abs{f_i(x)}\leq \epsilon $ pour tout $i$
est vide. Il en résulte que si l'on se donne une fonction analytique $g'$ sur $X$ suffisamment proche 
de $g$ et, pour tout $i$, une fonction analytique $f'_i$ sur $X$ suffisamment proche de $f_i$, les
$f'_i$ et $g'$ n'ont pas de zéro commun sur $X$ et le domaine affinoïde $\Gamma$-rationnel
défini par
la condition \[\abs{f'_1}\leq \lambda_1 \abs {g'}\;\text{et}\ldots\;\text{et}\;\abs{f'_n}\leq \lambda_n \abs {g'}\]
coïncide avec $V$. 

\subsubsection{}\label{gerr-grau}
Nous utiliserons à plusieurs reprises la version $\Gamma$-stricte du théorème de Gerritzen-Grauert : 
si $X$ est un espace $k$-affinoïde $\Gamma$-strict, tout domaine affinoïde 
$\Gamma$-strict de $X$ est une union finie de domaines $\Gamma$-rationnels. 
À l'aide de ce résultat on démontre que si $X$ est un espace
$k$-analytique $\Gamma$-strict et compact et si $Y$ est un sous-espace analytique fermé de $X$, 
tout domaine analytique $\Gamma$-strict et compact de $Y$ est de la forme $V\cap Y$ pour un certain
domaine analytique $\Gamma$-strict et compact $V$ de $X$. (Sur tous ces points on pourra  par exemple 
consulter \cite{ducros2021a}, 1.18, lemme 1.19
et remarque 1.20).

\subsubsection{}
Soit $X$ un espace $k$-analytique $\Gamma$-strict. On définit le site $X\grot \Gamma$ comme suit : 
\begin{itemize}[label=$\diamond$]
\item ses objets sont les domaines analytiques $\Gamma$-stricts de $X$, et ses flèches les inclusions ; 
\item ses familles couvrantes sont les G-recouvrements. 
\end{itemize}

Ce site raffine le site topologique de $X$ : tout ouvert de $X$ en est un domaine analytique $\Gamma$-strict, et 
tout recouvrement ouvert d'un ouvert de $X$ en est un G-recouvrement.

 \subsubsection{Invariance radicielle de la $G$-topologie}
\label{radiciel-domanial}
Supposons le corps 
$k$ de caractéristique $p>0$, soit $X$
un espace $k$-analytique $\Gamma$-strict et soit
$F$ une extension complète de
$k$ dans laquelle la fermeture radicielle de $k$
est dense. 
Soit $\pi$ la flèche $X_F\to X$. On sait que
le
morphisme $\pi$ induit un homéomorphisme entre les espaces topologiques
sous-jacents à $X_F$ et $X$, et on a un résultat
analogue relatif aux G-topologies
$\Gamma$-strictes.
Plus précisément, soit $V$
une partie de $X_F$. Les assertions
suivantes sont alors équivalentes : 
\begin{enumerate}[i]
\item $V$ est un domaine analytique $\Gamma$-strict de
$X_F$ ; 
\item $\pi(V)$
est un domaine analytique $\Gamma$-strict
de $X$.
\end{enumerate}
En effet, l'implication (ii)$\Rightarrow$(i)
provient de l'égalité
$V=\pi^{-1}(\pi(V))$, due à l'injectivité de $\pi$. 
Supposons
maintenant
que (i) soit vérifiée, et
montrons (ii). 
L'assertion est
locale sur $X\grot \Gamma$,
ce qui permet de supposer $X$ affinoïde ;
posons $A=\mathscr O_X(X)$. 
Notre assertion est également locale sur $V\grot\Gamma$
puisque l'homéomorphisme $\pi$ préserve les G-recouvrements.
Par conséquent
on peut supposer que $V$ est affinoïde et même,
en vertu du théorème de Gerritzen-Grauert (\ref{gerr-grau})
que c'est un domaine
$\Gamma$-rationnel de $X_F$. Choisissons
une description de $V$ par une conjonction d'inégalités
\[\abs{f_1}\leq \lambda_1 \abs g\;\text{et}\ldots
\;\text{et}\;\abs{f_n}\leq \lambda_n \abs g\] où $g$ et
les $f_i$ sont des
fonctions analytiques sur $X_F$ sans zéro commun sur $X_F$
et où les $\lambda_i$ sont des éléments de $\Gamma$.
Comme on peut perturber légèrement $g$ et les $f_i$ sans
que cela change le domaine $V$ décrit par les inégalités
ci-dessus (\ref{domrat-approximation}), on
peut supposer par notre hypothèse
sur $F$ qu'ils appartiennent tous à
$\mathscr A\hotimes_kk^{1/p^m}$
pour un certain $m$
puis, en les remplaçant tous,
ainsi que les $\lambda_i$, par leur puissance $p^m$-ième,
qu'ils appartiennent à $A$. Mais
$\pi(V)$ est alors le domaine $\Gamma$-rationnel
de $X$ défini par la  «même» conjonction
d'inégalités que $V$.

\subsubsection{Structure
domaniale sur un espace analytique}\label{gamma-an-domanial}
Soit $X$ un espace $k$-analytique $\Gamma$-strict. L'ensemble des domaines analytiques $\Gamma$-stricts de $X$
munit l'espace topologique sous-jacent à $X$ d'une structure domaniale, dite $\Gamma$-strictement analytique
(ou éventuellement 
analytique si $\Gamma=\rpos$). Le site domanial correspondant est celui
que nous avons noté $X\grot \Gamma$. 

On dispose ainsi d'un foncteur de la catégorie des espaces
analytiques $\Gamma$-stricts vers celle des espaces 
domaniaux. 

Si $Y$ est un sous-espace analytique fermé de $X$, la structure domaniale sur $Y$ induite par la structure
domaniale $\Gamma$-strictement analytique de $X$ coïncide 
en vertu de \ref{gerr-grau} avec la structure domaniale $\Gamma$-strictement analytique de $Y$. 

Supposons $k$ de caractéristique 
$p>0$ et soit $F$ une extension complète de $k$ dans laquelle la fermeture radicielle de
$k$ est dense. Il résulte de
\ref{radiciel-domanial}
que $\pi$ induit un isomorphisme d'espaces
domaniaux entre $X_F$ et $X$, lorsque ceux-ci sont munis de 
leurs structures domaniales $\Gamma$-strictement analytiques.

\subsubsection{Structure domaniale enrichie  sur un espace
analytique} \label{gamma-an-enrichi}
Soit $X$ un espace $k$-analytique $\Gamma$-strict. 
On dispose sur ce dernier d'une structure d'espace domanial, décrite 
à l'exemple \ref{gamma-an-domanial}. On en fait un espace domanial enrichi en 
le munissant du faisceau $(\Gamma\cdot \abs{\mathscr O_X^\times})^\Q$
(c'est-à-dire de la faisceautisation
sur le site $X\grot\Gamma$ du préfaisceau
$V\mapsto
(\Gamma\cdot \abs{\mathscr O_X^\times(V)})^\Q$). 
Nous dirons que la structure domaniale enrichie
correspondante est la structure domaniale
enrichie $\Gamma$-strictement $k$-analytique. 

On dispose ainsi d'un foncteur de la catégorie des espaces
analytiques $\Gamma$-stricts vers celle des espaces 
domaniaux enrichis.

Si $Y$ est un sous-espace analytique fermé de $X$, 
sa structure domaniale enrichie
$\Gamma$-strictement $k$-analytique coïncide avec
la structure domaniale enrichie héritée de celle $X$ : 
c'est une conséquence de l'assertion analogue
pour les structures domaniales (non enrichies) et du
fait que si $V$ est un domaine affinoïde $\Gamma$-strict
de $X$, toute fonction analytique $f$
sur $Y\cap V$ se relève en une fonction analytique sur $V$, 
inversible au voisinage de $Y\cap V$ lorsque $f$
est inversible.

\subsubsection{Invariance par extension radicielle}
\label{radiciel-enrichi}
Supposons $k$
de caractéristique $p>0$, 
soit $X$ un espace $k$-analytique $\Gamma$-strict
et soit $F$ une extension complète de $k$ 
dans laquelle la clôture radicielle de $k$
est dense. Soit $\pi$ le morphisme
canonique $X_F\to X$. Nous avons vu 
que $\pi$ induit un isomorphisme entre
les espaces domaniaux $X_F$ et $X$. Mais 
c'est en fait même un isomorphisme d'espaces domaniaux enrichis
(les structures en jeu étant bien entendu les structures
$\Gamma$-strictement analytiques). 
Pour le voir, il suffit en effet de vérifier que si
$V$ est un domaine affinoïde $\Gamma$-strict de $X$ et
si $f$ est une fonction analytique inversible sur 
$V_F$, alors $\abs f=\lambda\circ \pi$ pour une certaine
section $\lambda$
de $(\Gamma\cdot \abs{\mathscr O_X^\times})^\Q$.
Or comme $\abs f$ ne change pas par petite perturbation 
de la fonction 
inversible $f$, on peut supposer que celle-ci appartient à
$\mathscr O_X(V)\hotimes_kk^{1/p^n}$ pour un certain $n$. 
Mais $f^{p^n}$ appartient dans ce cas à $\mathscr O_X(V)$, 
et l'assertion souhaitée est alors évidente.

\subsection{Squelettes d'un espace analytique}
Nous allons utiliser systématiquement la géométrie $c$-linéaire
par morceaux développée ci-dessus à la sous-section 
\ref{pl-abstrait}, mais en fixant désormais la valeur de $c$ qui sera pour toute la suite égale
à $(\Q,(\abs{k^\times}\cdot \Gamma)^\Q))$.

\subsubsection{}\label{sigma-enrichi}
Si $X$ est un espace $k$-analytique $\Gamma$-strict, il possède une structure naturelle 
d'espace domanial enrichi (\ref{gamma-an-enrichi}). 

Si $\Sigma$ est une partie localement fermée de $X$, elle hérite donc d'une structure
domaniale enrichie (\ref{locferm-enrichi}). 
Concrètement, un domaine de $\Sigma$ en est un sous-ensemble G-recouvert par des parties de la forme
$V\cap \Sigma$ où $V$ est un domaine analytique $\Gamma$-strict de $X$, et le faisceau structural de $\Sigma$ est le 
sous-faisceau domanial de $\mathscr C^0(\cdot, \rpos)$ engendré par les $r
\cdot\abs f_{|\Sigma \cap V}^q\colon \Sigma\cap V\to \rpos$
où $V$ est un domaine analytique $\Gamma$-strict de $X$, où
$f$ une fonction analytique inversible sur $V$, où $r$ est un élément de $\Gamma^\Q$ et $q$ un rationnel.

\begin{defi}\label{def-squelette}
Soit $X$ un espace $k$-analytique $\Gamma$-strict
et soit
$\Sigma$ une partie localement fermée de $X$. On dit que $\Sigma$ est un \textit{$c$-squelette}
de $X$ si sa structure d'espace domanial enrichi décrite en \ref{sigma-enrichi} en fait un espace
$c$-linéaire par morceaux. 
\end{defi}

\subsubsection{}
Soit $X$ un espace $k$-analytique $\Gamma$-strict
et soit $\Sigma$ une partie localement fermée
de $X$.

Si $\Sigma$ est un $c$-squelette
de $X$, une partie localement fermée
$\mathrm T$ de $\Sigma$ est un $c$-squelette de $X$ si
et seulement si c'est une partie $c$-linéaire par morceaux
de $\Sigma$ : cela résulte du lemme \ref{c-lin-dom}. 

\paragraph{}\label{squel-imm-ferm}
Si $Y$ est un sous-espace analytique fermé de $X$
contenant $\Sigma$, alors $\Sigma$ est un $c$-squelette
de $X$ si et seulement si c'est un $c$-squelette de $Y$ : 
cela résulte du fait que la formation 
de la structure domaniale enrichie $\Gamma$-strictement
$k$-analytique est compatible aux immersions
fermées, comme expliqué à la fin de l'exemple
\ref{gamma-an-enrichi}. 

\paragraph{}
Supposons que $\Sigma$ soit un $c$-squelette et soit
$x\in \Sigma$. Soit $d$ la dimension 
de $\Sigma$ en $x$. Il existe alors un domaine
analytique $\Gamma$-strict et compact $V$
de $X$
contenant $x$ et une famille finie de fonctions inversibles
$(f_1,\ldots, f_n)$ sur $V$ telles que 
le $n$-uplet
$\abs f:=(\abs{f_1},\ldots, \abs{f_n})$ induise
un isomorphisme $(\Sigma\cap V)\simeq P$ où $P$ 
est un $c$-polytope compact de $\rposn n$ de dimension 
$d$ en $\abs {f(x)}$. Soit $W$ un voisinage analytique
$\Gamma$-strict compact
de $x$ dans $V$. L'image $\abs f(W)$ est un $c$-polytope
compact de $\rposn n$ (\cite{ducros2012b}, théorème 1.2), 
qui contient le voisinage $\abs f(W\cap \Sigma)$ de $\abs {f(x)}$
dans $P$. Il est donc de dimension $\geq d$. Ceci valant pour
tout $W$, il résulte du théorème 3.4 de
\cite{ducros2012b} que $d_k(x)\geq d$. 

\subsubsection{Fonctorialité 
des $c$-squelettes}\label{fonctor-csquel}
Soit $f\colon Y\to X$ un morphisme d'espaces
$k$-analytiques, soit $\Tau$ un $c$-squelette de $Y$
et soit $\Sigma$ un $c$-squelette de $X$ tel que 
$f(\Tau)$
soit contenue dans
$\Sigma$. Par propriété universelle
de la structure domaniale enrichie induite, l'application 
$\Tau\to \Sigma$ induite par $f$ est un morphisme d'espaces
domaniaux enrichis, c'est-à-dire ici une application 
$c$-linéaire par morceaux. 

\subsubsection{}
La définition de $c$-squelette que nous venons de donner est 
étroitement apparentée à celle de \cite{ducros2012b}, sans lui
être équivalente. La différence de fond
est que nous exigeons dans
\cite{ducros2012b} que tout point $x$ d'un $c$-squelette de
$X$ soit d'Abhyankar de rang maximal, c'est-à-dire satisfasse
l'égalité $d_k(x)=\dim_x X$. Nous ne demandons rien de tel ici, 
et ce relâchement est crucial pour
pouvoir espérer des résultats de stabilité
par image directe un tant soit peu généraux 
(penser déjà
au cas d'une immersion fermée,
en l'occurrence traité plus haut en \ref{squel-imm-ferm}). 

Nous allons maintenant donner un critère qui nous sera
très utile
en pratique pour montrer qu'une partie d'un espace
$k$-analytique $\Gamma$-strict en est un $c$-squelette.

\begin{lemm}\label{critere-squelette}
Soit $X$ un espace $k$-analytique $\Gamma$-strict et soit $\Sigma$ une partie localement fermée
de $X$. Notons $\mathscr O_X(\Sigma)$ la colimite des $\mathscr O_X(V)$ où $V$
parcourt l'ensemble des domaines analytiques $\Gamma$-stricts de $X$ contenant $\Sigma$. 
Supposons qu'il existe un sous-ensemble $E$ de $\mathscr O_X(\Sigma)^\times$ et une famille finie
$(f_1,\ldots,f_n)$ d'éléments de $E$ satisfaisant les propriétés suivantes : 

\begin{enumerate}[A]
\item pour toute famille finie $(g_1,\ldots, g_s)$ d'éléments de $E$, 
la famille
\[(\abs{f_1},\ldots, \abs{f_n}, \abs{g_1},\ldots, \abs{g_s})\] induit un homéomorphisme de $\Sigma$
sur une partie $c$-linéaire par morceaux de $\rposn {n+s}$ ; 
\item pour tout $x\in \Sigma$, le sous-groupe de $\hr x^\times$ engendré par les
$h(x)$ pour $h\in E$ est dense. 
\end{enumerate}
Le sous-ensemble $\Sigma$ de $X$ est alors un $c$-squelette. 
\end{lemm}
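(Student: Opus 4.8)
Notons $\iota=(\abs{f_1},\ldots,\abs{f_n})\colon\Sigma\to\rposn n$ et $\Lambda_\Sigma$ le faisceau structural de $\Sigma$ (\ref{sigma-enrichi}). D'après (A) appliquée à la famille vide, $\iota$ est un homéomorphisme de $\Sigma$ sur une partie $c$-linéaire par morceaux $P$ de $\rposn n$. Le plan est de montrer que $\iota$ est un \emph{isomorphisme d'espaces domaniaux enrichis} de $\Sigma$ sur $P$ ; comme $P$, muni de sa structure naturelle, est un espace $c$-linéaire par morceaux, la définition~\ref{def-squelette} permettra alors de conclure. Il reste donc à voir que $\iota$ et $\iota^{-1}$ sont des morphismes d'espaces domaniaux enrichis, c'est-à-dire que $\iota$ échange les domaines dans les deux sens et identifie $\Lambda_\Sigma$ au faisceau structural de $P$.

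\emph{Identification des faisceaux structuraux.} Dans un sens, toute fonction $c$-affine sur $\rposn n$ se tire en arrière par $\iota$ en un monôme $\delta\cdot\abs{f_1}^{e_1}\cdots\abs{f_n}^{e_n}$ avec $e_i\in\Q$ et $\delta\in\Delta$ ; en écrivant une puissance entière convenable de $\delta$ sous la forme $\abs\lambda\cdot\gamma$ avec $\lambda\in k^\times$ et $\gamma\in\Gamma$, le facteur $\abs\lambda$ provenant de la fonction constante inversible $\lambda$, on voit qu'une telle fonction appartient à $\Lambda_\Sigma$. Pour le sens réciproque, le point central est que pour tout $g\in E$ la fonction $\abs g\circ\iota^{-1}$ est $c$-linéaire par morceaux sur $P$ : en effet (A) appliquée à la famille $(g)$ fournit un homéomorphisme $(\abs{f_1},\ldots,\abs{f_n},\abs g)$ de $\Sigma$ sur une partie $c$-linéaire par morceaux $\widetilde P$ de $\rposn{n+1}$, et la projection sur les $n$ premières coordonnées induit un homéomorphisme $\widetilde P\simeq P$ ; en appliquant la remarque~\ref{rem-im-cellule} aux cellules d'une décomposition $c$-cellulaire (\ref{decomp-cell}) de $\widetilde P$, on voit que cet homéomorphisme est $c$-linéaire par morceaux, de sorte que $\widetilde P$ est le graphe d'une fonction $c$-linéaire par morceaux sur $P$, à savoir $\abs g\circ\iota^{-1}$. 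On invoque ensuite (B) : si $h$ est une fonction inversible sur un domaine analytique $\Gamma$-strict de $X$ et si $x\in\Sigma$, la densité du sous-groupe de $\hr x^\times$ engendré par les $h'(x)$, $h'\in E$, fournit un monôme $u$ en des éléments de $E$ avec $\abs{(h/u-1)(x)}<1$, d'où $\abs h=\abs u$ sur un voisinage de $x$ dans $\Sigma$ ; par suite, $\abs h\circ\iota^{-1}$ coïncide G-localement sur $P$ avec $\abs u\circ\iota^{-1}$, qui est un produit de puissances rationnelles des $\abs{h'}\circ\iota^{-1}$, donc est $c$-linéaire par morceaux. Comme $\Lambda_\Sigma$ est engendré par les fonctions $r\abs h^q$ ($r\in\Gamma^\Q\subseteq\Delta$, $q\in\Q$, $h$ inversible), on conclut que $\iota$ identifie $\Lambda_\Sigma$ au faisceau structural de $P$.

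\emph{Identification des structures domaniales.} Dans un sens, un $c$-polytope de $P$ étant G-localement une réunion finie d'intersections finies d'ensembles $\{u\leq 1\}$ avec $u$ $c$-affine, son image réciproque par $\iota$ est G-localement réunion finie d'intersections finies de traces sur $\Sigma$ de domaines $\Gamma$-rationnels, donc un domaine de $\Sigma$ ; il en va de même de toute partie $c$-linéaire par morceaux de $P$. Le sens réciproque — montrer que $\iota(V\cap\Sigma)$ est $c$-linéaire par morceaux dans $P$ pour tout domaine analytique $\Gamma$-strict $V$ de $X$ — est le point le plus délicat, les fonctions découpant $V$ pouvant s'annuler sur $\Sigma$. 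On se ramène, par Gerritzen--Grauert (\ref{gerr-grau}) et G-localité sur $\Sigma$, au cas où $X$ est $\Gamma$-affinoïde, $\Sigma\subseteq X$ et $V=\{\abs{a_i}\leq\gamma_i\abs{a_0}\}_{1\leq i\leq r}$ est $\Gamma$-rationnel, les $a_i$ étant sans zéro commun ; $V$ est alors affinoïde, donc compact et fermé dans l'espace séparé $X$. Pour chaque $x_0\in\Sigma$ je choisirais un voisinage $c$-polytopal compact $Q_{x_0}$ de $\iota(x_0)$ dans $P$ assez petit, et je noterais $W_{x_0}=\iota^{-1}(Q_{x_0})$ : si $a_0(x_0)=0$, alors $x_0\notin V$ (sinon tous les $a_i$ s'annuleraient en $x_0$), donc $V\cap W_{x_0}=\emptyset$ pourvu que $Q_{x_0}$ soit assez petit ; si $a_0(x_0)\neq 0$, alors $a_0$ est inversible sur $W_{x_0}$, l'ensemble $V\cap W_{x_0}$ est l'intersection des $\{x\in W_{x_0}:\abs{(a_i/a_0)(x)}\leq\gamma_i\}$, et l'argument de densité ci-dessus (appliqué à $a_i/a_0$ là où elle ne s'annule pas, et en notant qu'elle est de valeur absolue strictement inférieure à $\gamma_i$ près des points où elle s'annule) montre qu'après un nouveau rétrécissement de $Q_{x_0}$ cet ensemble est l'image réciproque par $\iota$ d'une partie $c$-linéaire par morceaux de $Q_{x_0}$. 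Comme les $Q_{x_0}$ G-recouvrent $P$, que $\iota(V\cap\Sigma)\cap Q_{x_0}=\iota(V\cap W_{x_0})$ est $c$-linéaire par morceaux pour chaque $x_0$, et que le caractère $c$-linéaire par morceaux d'une partie de $\rposn n$ est G-local, $\iota(V\cap\Sigma)$ est $c$-linéaire par morceaux, et le cas d'un domaine analytique $\Gamma$-strict quelconque s'en déduit.

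En rassemblant ces points, $\iota$ est un isomorphisme d'espaces domaniaux enrichis de $\Sigma$ sur $P$, d'où, via~\ref{def-squelette}, le fait que $\Sigma$ est un $c$-squelette. Le principal obstacle est le sens réciproque pour les structures domaniales : il est essentiel d'y raisonner avec des voisinages $c$-polytopaux \emph{compacts} — un ouvert d'un espace $c$-linéaire par morceaux n'en étant en général pas un sous-espace $c$-linéaire par morceaux — et c'est leur compacité, jointe à l'approximation fournie par (B), qui permet d'absorber l'annulation éventuelle des fonctions analytiques découpant $V$.
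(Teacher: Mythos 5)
Votre démonstration suit pour l'essentiel la même stratégie que celle de l'article : transporter la structure $c$-linéaire par morceaux de $P=\abs f(\Sigma)$ sur $\Sigma$ \emph{via} l'homéomorphisme fourni par (A), puis vérifier la coïncidence des deux structures domaniales enrichies, l'hypothèse (A) donnant le caractère linéaire par morceaux des $\abs g_{|\Sigma}$ pour $g\in E$ (votre argument de graphe, avec la projection sur les $n$ premières coordonnées et la remarque \ref{rem-im-cellule}, est correct) et l'hypothèse (B) servant à approcher toute fonction inversible par un monôme en les éléments de $E$. La seule divergence réelle se situe au point crucial, à savoir montrer que $V\cap\Sigma$ est linéaire par morceaux pour $V$ domaine analytique $\Gamma$-strict : vous y procédez par Gerritzen--Grauert et une analyse directe des domaines $\Gamma$-rationnels $\{\abs{a_i}\leq\gamma_i\abs{a_0}\}$, en distinguant selon l'annulation de $a_0$ et des $a_i$ au point considéré de $\Sigma$, tandis que l'article utilise la réduction de Temkin du germe $(V,x)$, dont la forme $\mathbf P_{\hrt x/\widetilde k}\{a_1,\ldots,a_m\}$ fournit directement une description de $V$ au voisinage de $x$ par des inégalités $\abs{\alpha_i}\leq r_i$ avec $\alpha_i$ inversible, auxquelles (B) s'applique sans discussion de cas. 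Votre variante est plus élémentaire et fonctionne parfaitement lorsque $X$ est affinoïde ou, plus généralement, bon ; en revanche la réduction « on se ramène au cas où $X$ est $\Gamma$-affinoïde et $\Sigma\subseteq X$ » n'est pas justifiée pour un espace $\Gamma$-strict quelconque : un point n'y possède en général qu'une base de voisinages analytiques compacts, non affinoïdes, et un domaine affinoïde de $X$ n'est pas un domaine rationnel d'un affinoïde ambiant qui serait un voisinage du point ; c'est précisément cette difficulté que la réduction des germes à la Temkin permet de contourner, et c'est ce que « achète » l'approche de l'article. Le reste de votre argument (le cas $a_0(x_0)=0$ traité par fermeture de $V$ dans l'affinoïde séparé ambiant, le cas $a_i(x_0)=0$ traité par continuité, le recollement G-local au moyen des voisinages polytopaux compacts $Q_{x_0}$) est correct.
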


\begin{proof}
L'hypothèse (A) assure 
en particulier que $\abs f:=(\abs{f_1}, \ldots, \abs{f_n})$ induit un homéomorphisme de $\Sigma$
sur une partie $c$-linéaire par morceaux $P$ de $\rposn m$. On munit $\Sigma$ de sa structure d'espace $c$-linéaire
par morceaux $\mathsf L$ déduite de celle de $P$ \textit{via}
$\abs f$, et nous allons montrer que cette structure coïncide avec sa structure domaniale enrichie $\mathsf D$.

\paragraph{Identité des structures domaniales}
Soit $\mathrm T$ une partie de $\Sigma$.

Supposons que $\mathrm T$ est $c$-linéaire
par morceaux au sens de $\mathsf L$ ; nous allons montrer que c'est un domaine au sens de $\mathsf D$.
Par définition, $\mathrm T$ s'écrit $\abs f_{|\Sigma}^{-1}(Q)$ pour une certaine partie
$Q$ de $P$
qui est $c$-linéaire par morceaux, ce qui veut
dire que $Q$ est une union localement finie de $c$-polytopes ; 
comme une union localement finie de domaines fermés au sens de $\mathsf D$ est un domaine au sens de $\mathsf D$, on 
peut supposer que $Q$ est un $c$-polytope. Mais alors $\mathrm T=\abs{f}_{|\Sigma}^{-1}(P)$ est décrit en tant
que sous-ensemble de $\Sigma$ par une combinaison booléenne finie 
d'inégalités de la forme $a\prod \abs{f_i}_{|\Sigma}^{r_i}\leq 1$, où $a\in (\Gamma \cdot \abs{k^\times})^\Q$ et où 
les $r_i$ sont rationnels. C'est donc l'intersection de $\Sigma$ avec un domaine analytique $\Gamma$-strict de $X$,
et c'est dès lors un domaine au sens de $\mathsf D$. 

Réciproquement, 
supposons que $\mathrm T$ est un domaine au sens de $\mathsf D$, nous allons montrer que
$\mathrm T$ est $c$-linéaire par morceaux au sens de $\mathsf L$.
Cette assertion étant G-locale sur $\mathrm T$, on peut supposer que 
$\mathrm T=V\cap \Sigma$ pour un certain domaine affinoïde $\Gamma$-strict $V$ de 
$X$, et il suffit de s'assurer que pour tout $x\in \mathrm T$ il existe un voisinage ouvert $U$ de $x$ dans $\Sigma$ tel
que
$U\cap \mathrm T$ soit $c$-linéaire par morceaux relativement à $\mathsf L$. Soit donc $x\in \mathrm T$. 
Le point $x$ appartient alors à $V$. Comme ce dernier est affinoïde et $\Gamma$-strict, 
sa réduction à la Temkin est de la forme $\mathbf P_{\hrt x/\widetilde k}\{a_1,\ldots, a_m\}$ ;
en relevant les $a_i$
dans $\hr x$
puis en utilisant (B), on en déduit l'existence 
d'éléments $\alpha_1, \ldots, \alpha_m$ de $\langle E\rangle$, tous définis sur un même domaine analytique $\Gamma$-strict $W$
contenant $\Sigma$, et d'un voisinage ouvert $\Omega$ de $x$ dans $W$ tel que $V\cap \Omega$ soit défini comme 
le lieu de validité sur $\Omega$ d'une conjonction 
d'inégalités de la forme $\abs \alpha_i\leq r_i$, où chaque $r_i$ appartient à $\Gamma$. 
Posons $U=\Omega\cap \Sigma$. C'est un voisinage ouvert de $x$ dans $\Sigma$, 
et $\mathrm T\cap U$ est défini comme sous-ensemble de $U$ par la conjonction des inégalités 
$\abs {\alpha_i}_{|U}\leq r_i$. Il suffit pour conclure de s'assurer que pour tout $i$, la fonction 
$\abs{\alpha_i}_{|\Sigma}$ est $c$-linéaire par morceaux au sens de $\mathsf L$. Comme $\alpha_i$ appartient
à $\langle E\rangle$, il suffit finalement de démontrer que $\abs g_{|\Sigma}$ est $c$-linéaire par morceaux
au sens de $\mathsf L$ pour toute $g \in E$. Mais c'est une conséquence immédiate de l'hypothèse (A). 

\paragraph{Identité des faisceaux structuraux}
Soit $\mathrm T$ un domaine de $\Sigma$ (pour l'une ou l'autre des structures $\mathsf L$ et $\mathsf D$, par ce qui précède 
leurs domaines sont les mêmes). Soit $\lambda$ une application continue de $\mathrm T$ vers $\rpos$. 

Supposons que 
$\lambda$ est une section du faisceau structural relatif à $\mathsf D$. Montrons que $\lambda$ est $c$-linéaire
par morceaux relativement à $\mathsf L$. L'assertion étant G-locale, on peut supposer que $\mathrm T$
est égal $V\cap \Sigma$ pour un certain domaine analytique $\Gamma$-strict $V$ de $X$, et que 
$\lambda=r\cdot\abs g_{|\Sigma}^q$ pour une certaine fonction analytique inversible $g$ sur $V$, un certain
$r\in \Gamma^\Q$ et un certain $q\in \Q$.  En raisonnant localement et en
utilisant l'hypothèse (B) on peut de surcroît supposer que $g$ appartient à $\langle E\rangle$, puis à $E$. 
Mais il résulte alors de (A) que $\abs g_{|\Sigma}$ est $c$-linéaire par morceaux au sens de $\mathsf L$,
et il en va dans ce 
alors de
même de $r\cdot\abs g^q_{|\Sigma}$.

Soit $\mu\colon \mathrm T
\to \rpos$ une section $c$-linéaire par morceaux relativement à $\mathsf L$.
Montrons que $\mu$ est une section du faisceau structural 
relatif à $\mathsf D$. L'assertion étant G-locale sur $\mathrm T$, on peut supposer 
que $\mu$ est de la forme $a\cdot\abs{f_1}_{|\mathrm T}^{r_1}\cdot \ldots \cdot\abs{f_n}_{|\mathrm T}^{t_n}$ où
$a\in (\Gamma\cdot\abs{k^\times})^\Q$ et où les $r_i$
sont rationnels. Et $\mu$ est alors par définition une section du faisceau structural relatif à $\mathsf D$. 
\end{proof}

\section{Exemples de squelettes}

Nous allons maintenant présenter quelques exemples
fondamentaux de $c$-squelettes, qui avaient déjà été étudiés par le premier
auteur dans \cite{ducros2012b}. Le théorème \ref{imrec-squel}  ci-dessous, 
dont la preuve occupe la plus grande partie de cette section, est essentiellement 
le théorème 5.1
de \textit{loc. cit.}. Nous avons choisi d'en redonner une démonstration 
adaptée aux définitions et conventions du présent article, avec de surcroît une simplification : 
en fin de preuve, nous substituons à un calcul un peu ingrat et \textit{ad hoc}
de \cite{ducros2012b} (voir 5.6 et le lemme 5.7 de \textit{loc. cit.}) l'utilisation d'une 
variante graduée d'un «lemme de normalisation de Noether torique» de Maclagan et Sturmfels ; 
cette simplification est elle-même tirée
du travail en révision
\cite{chambertloir-d-2012} 
d'Antoine Chambert-Loir et du premier auteur. 

\subsection{Images réciproques du squelette standard de $\gma n$ : les énoncés}

\begin{enonce}[remark]{Le $c$-squelette standard de $\gma n$}
Soit $n$ un entier. Nous désignerons par $S_{n,k}$
le sous-ensemble de $\gma n$ égal 
à $\{\eta_r\}_{r\in \rposn n}$, où $\eta_r$
désigne la (semi)-norme 
\[\sum a_I T^I\mapsto \max \abs{a_I}r^I\]
(en notation multi-indicielle). 
En tant qu'image de la section continue
$r\mapsto \eta_r$
de $\abs T\colon \gma n \to \rposn n$, le sous-ensemble
$S_n$ de $\gma n$ en est une partie fermée. 
Il est immédiat
que les hypothèses (A) et (B) du lemme
\ref{critere-squelette}
sont satisfaites 
lorsqu'on prend $X=\gma n$, $\Sigma
=S_{n,k}$, $E=k[T]\setminus\{0\}$ et $f_i=T_i$ pour
tout $i$ ; par conséquent, 
$S_{n,k}$ est un $c$-squelette. 

\end{enonce}

\subsubsection{}
Soit $L$
une extension complète de $k$. 
Il est immédiat que
la flèche naturelle
$\gmb Ln\to \gma n$
induit un homéomorphisme $S_{n,L}\simeq
S_{n,k}$. 
Par ailleurs, il résulte 
de la définition qu'un point $x$ de $X$ appartient à 
$S_{n,k}$ si et seulement si les $T_i(x)$ sont tous 
non nuls, et si les $\widetilde{T_i(x)}$ 
forment une famille d'éléments de $\hrt x$
algébriquement indépendants sur $\widetilde k$. 
Il s'ensuit que si $\widetilde L$ est algébrique sur
$\widetilde k$ (c'est par exemple le cas dès que
la fermeture algébrique de $k$ est dense dans $L$)
alors $S_{n,L}$ est l'image réciproque
de $S_{n,k}$ sur $\gmb Ln$.

\begin{theo}
[\cite{ducros2012b}, th. 5.1]\label{imrec-squel}
Soit $n$ un entier, 
soit 
$X$ un espace $k$-analytique de dimension
au plus $n$
et soient 
$\phi_1,\ldots, \phi_m$ des morphismes de
$X$ vers $\gma n$. 
Pour toute extension
complète $L$
de $k$, on pose $\Sigma_L=\bigcup_i \phi_i^{-1}(S_{n,L})$
et l'on note $c_L$ le couple
$(\Q, (\Gamma\cdot \abs{L^\times})^\Q)$ (on a donc
$c_k=c$).

\begin{enumerate}[1]
\item Le fermé $\Sigma_k$ de $X$
en est un $c$-squelette. 
\item On a 
$\dim_x \Sigma_k=n$ en tout point
$x$ appartenant à  $\Sigma_k\setminus \partial X$. 
\item Pour tout $i$, l'application 
du $c$-squelette $\phi_i^{-1}(S_{n,k})$ vers $S_{n,k}$ induite
par $\phi$ est $c$-linéaire par morceaux et à fibres
discrètes. 
\item Pour toute extension complète $L$ de $k$
l'application naturelle de
$\Sigma_L$ vers $\Sigma_k$ est $c_L$-linéaire par morceaux, surjective, propre, ouverte et 
à fibres finies. 
\item Si $X$ est compact il existe une extension finie séparable
$F$ de $k$ telle que
l'application naturelle de $\Sigma_L$
vers $\Sigma_F$ soit un isomorphisme
$c_L$-linéaire par morceaux
pour toute extension complète $L$ de $F$. 
\end{enumerate}

\end{theo}

\subsection{Preuve 
du théorème \ref{imrec-squel} dans
le cas où $m=1$}
On suppose que
$m=1$ et l'on écrit $\phi$ au lieu de $\phi_1$. 
L'étude de ce cas particulier est en fait le cœur
de la preuve, et comprend un nombre important
d'étapes.

\subsubsection{}
Nous allons 
tout d'abord
expliquer comment nous
réduire au cas d'un
corps de base parfait. 
Il n'y a rien à faire si
$k$ est de caractéristique nulle. 
Supposons
maintenant qu'il est
de caractéristique $p>0$
et que le théorème a été démontré
sur un corps
parfait. Il est alors en particulier
vrai pour la perfection complétée
$\perf k$ de $k$.

Le squelette $S_{n,\perf k}$ est l'image réciproque
de $S_{n,k}$ sur $\gmb{\perf k}n$, si bien que 
$\Sigma_{\perf k}$ est l'image réciproque de
$\Sigma_k$ sur $X_{\perf k}$. Il s'ensuit en vertu de
\ref{radiciel-enrichi} que
$S_{n,\perf k}\to S_{n,k}$ et
$\Sigma_{\perf k}\to \Sigma_k$
sont des isomorphismes d'espaces domaniaux enrichis. Puisque
$\Sigma_{\perf k}$ est par hypothèse un espace $c$-linéaire
par morceaux, on en déduit que $\Sigma_k$ est $c$-linéaire
par morceaux : c'est donc un $c$-squelette, 
isomorphe à $\Sigma_{\perf k}$ (nous
ferons dans la suite de la preuve 
référence à ce fait en parlant d'\textit{invariance 
radicielle}). Et si $x$ est un point de $\Sigma_k\setminus
\partial X$, son unique antécédent $x'$ 
sur $X_{\perf k}$ est situé sur $\Sigma_{\perf k}
\setminus \partial X_{\perf k}$, et 
$\Sigma_{\perf k}$ est par hypothèse de dimension $n$
en $x'$ ; par conséquent, $\Sigma_k$ est 
de dimension $n$ en $x$.

On dispose 
d'un diagramme commutatif mettant en jeu quatre espaces
$c$-linéaires par morceaux, 
\[
\begin{tikzcd}
\Sigma_{\perf k}\ar[d]\ar[r]&S_{n,\perf k}\ar[d]\\
\Sigma_k\ar[r]&S_{n,k}
\end{tikzcd},\]
dans lequel les deux flèches verticales sont des isomorphismes
$c$-linéaires par morceaux,
et dont la flèche horizontale
du haut est $c$-linéaire par 
morceaux à fibres discrètes, puisque le théorème vaut
sur $\perf k$ ; il s'ensuit que $\Sigma_k\to S_{n,k}$
est $c$-linéaire par morceaux à fibres discrètes.

Soit $L$ une extension complète de $k$. Le diagramme
commutatif 
\[
\begin{tikzcd}
\Sigma_{\perf L}\ar[d]\ar[r]&\Sigma_{\perf k}\ar[d]\\
\Sigma_L\ar[r]&\Sigma_k
\end{tikzcd}\]
met en jeu quatre espaces $c_L$-linéaires par morceaux
(concernant la colonne de droites,
on a procédé à une extension
des paramètres de définition, 
\textit{cf.} \ref{pl-extension-parametres}),
ses deux flèches verticales
sont des isomorphismes par invariance
radicielle, et sa flèche horizontale du haut est $c_L$-linéaire
par morceaux à fibres finies puisque le théorème vaut
sur $\perf k$. Par conséquent, la flèche horizontale
du bas est elle aussi $c_L$-linéaire par morceaux
à fibres finies. 

Enfin, supposons 
que $X$ est compact. Comme le théorème vaut sur 
$\perf k$, il existe une extension finie séparable 
$E$ de $\perf k$ pour laquelle l'énoncé (5) est satisfait. 
Le lemme de Krasner assure l'existence d'une extension finie
séparable $F$ de $k$ telle que $E\simeq \perf F$. Soit
$L$ une extension complète de $F$. 
Dans la catégorie des espaces $c_L$-linéaires par morceaux
on dispose du diagramme commutatif
\[
\begin{tikzcd}
\Sigma_{\perf L}\ar[d]\ar[r]&\Sigma_E\ar[d]\\
\Sigma_L\ar[r]&\Sigma_F
\end{tikzcd},\]
dans laquelle les flèches verticales sont des isomorphismes
par invariance radicielle, et dans laquelle la flèche horizontale du haut en est un par
choix de $E$. La flèche horizontale du bas est donc
également un isomorphisme. 

La validité du théorème sur le corps $\perf k$ entraîne donc
sa validité sur $k$. Il est par
conséquent licite de supposer
$k$ parfait, ce que nous ferons désormais. 

\subsubsection{}
Les assertions (1), (2), (3)
et (4) étant locales sur $X$, on peut
le supposer compact (et
il n'y a donc plus lieu
de spécifier cette hypothèse dans (5)). 
Soit $x\in X$. 
Il suffit de démontrer
que (1), (3), (4) et (5) valent sur un voisinage
analytique compact et $\Gamma$-strict de $X$, 
et
que $\dim_x \Sigma_k=n$
si $x\in \Sigma_k\cap \partial X$. 
Si $x\notin
\Sigma_k$ on peut supposer quitte à restreindre 
$X$ que $\Sigma_k$ est vide, auquel cas
il n'y a rien à démontrer. On suppose
à partir de maintenant que $x\in \Sigma_k$. 
Puisque $\phi(x)\in S_{n,k}$ on a $d_k(x)\geq d_k(\phi(x))=n$, 
et comme $\dim_x X\leq n$ il vient $d_k(x)=n$ et 
$\dim_x X=n$. 

On peut de plus
supposer que $X$ est affinoïde. C'est en effet
clair pour les assertions (1), (3), (4) et (5) qui sont
locales sur $X\grot \Gamma$. Et en ce qui concerne
la dimension de $\Sigma_k $ en $x$ lorsque
$x\notin \partial X$, on remarque que si $x\notin \partial X$
alors $x$ possède un voisinage affinoïde $\Gamma$-strict 
dans $X$, par lequel on peut remplacer $X$.

\subsubsection{Algébrisation de la situation}
Si $Y\hookrightarrow X$ est une nilimmersion,
le théorème vaut pour $X$ si et seulement s'il 
vaut pour $Y$ (voir les considérations à la fin 
de l'exemple \ref{gamma-an-enrichi} ; notons
que $x\in \partial X$ si et seulement si
$x\in \partial Y$). 
Par conséquent, on peut supposer $X$ réduit.

L'anneau local $\mathscr O_{X,x}$ est artinien 
puisque $d_k(x)=\dim_x X=n$ ; étant réduit, c'est un corps. 
Il est en particulier régulier. L'espace $X$ est donc
régulier en $x$, et est dès lors quasi-lisse en $x$ puisque
$k$ est parfait. Quitte à restreindre $X$, on peut donc supposer
que c'est un domaine affinoïde de $\mathscr X\an$ pour un certain
$k$-schéma
$\mathscr X$ de type fini, lisse, intègre et de dimension $n$
(\cite{ducros2012b}, 0.21), et 
$\partial X$ est alors
le bord topologique 
de $X$ dans $\mathscr X\an$. 

Le morphisme $\phi \colon X\to \gma n$ est donné par $n$ fonctions analytiques
inversibles $f_1,\ldots, f_n$. Si $g_1,\ldots, g_n$ sont des fonctions analytiques inversibles
sur $X$ telles que $\abs{f_i-g_i}<\abs {f_i}$ identiquement sur $X$ pour tout indice $i$, alors
$\widetilde{g_i(y)}=\widetilde{f_i(y)}$ pour tout $y\in X$
et tout $i$, et en un point $y$ de $X$
la famille des $\widetilde{g_i(y)}$ est algébriquement indépendante sur $\widetilde k$
si et seulement
si c'est le cas de celle des $\widetilde{f_i(y)}$. Autrement dit, si $\psi$
désigne le morphisme
de $X$ vers $\gma n$ induit par les
$g_i$ alors $\psi^{-1}(S_{n,k})$ est
égal à $\Sigma_k$,
si bien qu'on peut remplacer
$\phi$ par $\psi$. Par approximation des $f_i$ on peut
donc, quitte à restreindre $X$ autour de $x$, 
supposer que $\phi$ est la
restriction à $X$
de l'analytification d'un morphisme
de $\mathscr X$ vers $\gmc n$, encore noté $\phi$, et dont 
les composantes sont encore notées $f_1,\ldots, f_n$. Remarquons
que $\phi$ est dominant,
puisque $\phi(x)\in S_{n,k}$ par hypothèse. 

Pour toute extension complète
$L$ de $k$ on notera
désormais $\Sigma_L$ l'image réciproque
$\phi^{-1}(S_{n,L})$ sur $\mathscr
X\an_L$ tout entier. Nous allons
montrer que $\Sigma_k$ est un
$c$-squelette de $\mathscr X\an$ purement de
dimension $n$, que $\Sigma_k\to 
S_{n,k}$ est $c$-linéaire par morceaux
à fibres finies,
que $\Sigma_L\to \Sigma_k$ est 
$c_L$-linéaire par morceaux, surjective, propre, ouverte et à fibres finies pour toute
extension complète $L$ de $k$, et qu'il existe une extension 
finie séparable $F$ de $k$ telle que
la flèche 
de changement de base $\Sigma_L\to \Sigma_F$ soit un isomorphisme pour toute
extension complète $L$ de $F$. Cela permettra de conclure, 
par intersection avec $X$ (qui n'interviendra désormais
plus, non plus que $x$). 

Remarquons que $d_k(y)=n$ pour tout $y\in \Sigma_k$, si bien que tout
point de $\Sigma_k$ est situé au-dessus du point générique de $\mathscr X$ : on pourra
donc dans la suite remplacer $\mathscr X$ par n'importe lequel de ses ouverts de Zariski non vides, 
et toute fonction rationnelle non nulle sur $\mathscr X$ appartient ainsi à 
$\mathscr O_{\mathscr X\an}(\Sigma_k)^\times$.

Si $K$ désigne la fermeture algébrique
de $k$ dans $\mathscr O(\mathscr X)$, le morphisme
$\phi$ se factorise par $\mathbf G_{\mathrm m, K}^n$, et 
$\Sigma_k$ peut tout aussi bien se décrire comme 
$\phi^{-1}(S_{n,K})$ sur $\mathscr X\an$. 
De plus, pour toute extension complète $L$ de $k$,
le schéma $\mathscr X_L$ est une union disjointe finie 
de schémas de la forme $\mathscr X\times_KE$ où 
$E$ est une extension complète de $K$. On peut dès lors
pour démontrer le théorème remplacer
$k$ par $K$, c'est-à-dire supposer
$\mathscr X$ géométriquement intègre. 

\subsubsection{Description détaillée de $\Sigma_k$}\label{description-sigma}
Commençons par introduire un peu
de vocabulaire. Soit $(h_1,\ldots, h_\ell)$ une famille de fonctions rationnelles non nulles
sur $\mathscr X$. Nous dirons que $(h_1,\ldots, h_\ell)$
est \textit{$\Sigma_k$-fidèle} 
si la restriction de
$(\abs{h_1},\ldots, \abs{h_\ell})$ à $\Sigma_k$
est injective, et qu'il est \textit{admissible} s'il existe un ouvert de Zariski non vide
$\mathscr U$ de $\mathscr X$ tel que  $(h_1,\ldots, h_\ell)$ induise
une immersion fermée de $\mathscr U$ dans $\gmc{n+\ell}$  ; 
dans ce cas $\abs h:=(\abs{h_1},\ldots, \abs{h_\ell})$ 
est une application propre de $\mathscr U\an$
vers $\rposn m$, et sa restriction au fermé 
$\Sigma_k$ de $\mathscr U\an$ est encore propre.

La famille $(h_1,\ldots, h_\ell)$ peut toujours être complétée en une famille admissible. 
En effet, choisissons un ouvert 
affine non vide $\mathscr V$ 
de $\mathscr X$ tel que 
les $h_i$ soient définies et inversibles
sur $\mathscr V$. Choisissons une famille
$a_1,\ldots, a_r$ de fonctions non nulles engendrant la $k$-algèbre $\mathscr O_{\mathscr X}(\mathscr V)$. 
Si $\mathscr U$ désigne l'ouvert d'inversibilité simultanée des $a_i$
sur $\mathscr V$ alors les
$a _\ell^{\pm1}$ engendrent
$\mathscr O_{\mathscr X}(\mathscr U)$, 
et $(h_1,\ldots, h_\ell, a_1,\ldots, a_r, a_1^{-1},\ldots, a_r^{-1})$ induit 
par conséquent une immersion fermée de $\mathscr U$ dans
$\gmc{\ell+r}$. 

Par ailleurs, il est clair que si $(h_1,\ldots, h_\ell)$ est $\Sigma_k$-fidèle,
alors $(h_1,\ldots, h_\ell,h)$ est encore $\Sigma_k$-fidèle pour toute
fonction rationnelle non nulle $h$ sur $\mathscr X$.

L'ingrédient essentiel de notre preuve est le théorème 
2.8 (1) de \cite{ducros2012b} qui assure l'existence d'une famille finie $g_1,\ldots, 
g_\ell$ de fonctions
rationnelles non nulles sur $\mathscr X$ qui séparent les prolongements
à
$\kappa(\mathscr X)$ de toute valuation de Gau\ss~ 
sur son sous-corps $k(f_1,\ldots, f_n)$ ; par construction, 
$(f_1,\ldots, f_n,g_1,\ldots, g_\ell)$ est $\Sigma_k$-fidèle. 
Par ce qui précède il est loisible, quitte à agrandir la famille des $g_i$, 
de supposer de plus que $(f_1,\ldots, f_n,g_1,\ldots, g_\ell)$ est admissible.

Soit $(h_1,\ldots, h_p)$ une famille
finie de fonctions rationnelles non nulles sur $\mathscr X$. Nous allons montrer que
$(\abs{f_1},\ldots, \abs{f_n},\abs{g_1},\ldots,
\abs{g_\ell}, \abs{h_1},\ldots, \abs{h_p})$
induit un homéomorphisme entre $\Sigma_k$ et un $c$-polytope de $\rposn{n+\ell+p}$
purement de dimension $n$ : 
il s'ensuivra en vertu du lemme \ref{critere-squelette} que $\Sigma_k$ est un squelette,
purement de dimension $n$.
La restriction de $(\abs{f_1},\ldots, \abs{f_n},\abs{g_1},\ldots, \abs{g_\ell}, \abs{h_1},\ldots, \abs{h_p})$ 
à $\Sigma_k$ est injective et propre, puisque c'est déjà le cas  pour
 $(\abs{f_1},\ldots, \abs{f_n},\abs{g_1},\ldots, \abs{g_\ell})$. Il suffit donc de montrer que 
 l'image de $\Sigma_k$ sous
 $(\abs{f_1},\ldots, \abs{f_n},\abs{g_1},\ldots, g_\ell, \abs{h_1},\ldots, \abs{h_p})$
 est un $c$-polytope de $\rposn {n+\ell+p}$. Ce dernier point peut se vérifier
 après agrandissement de la famille des $h_i$, ce qui autorise
 à supposer que 
 $(f_1,\ldots, f_n,g_1,\ldots, g_\ell, h_1,\ldots, h_p)$ est admissible ; pour simplifier les notations, 
on peut alors inclure les $h_i$ dans la famille des $g_i$. 
En restreignant $\mathscr X$
si besoin, on se ramène au cas où
$(f_1,\ldots, f_n,g_1,\ldots, g_\ell)$ induit une immersion fermée de $\mathscr X$
vers
$\gmc{n+\ell}$. Posons $\tau:=(\abs{f_1},\ldots, \abs{f_n},\abs{g_1},\ldots,
\abs{g_\ell})$.
C'est une application propre de $\mathscr X\an$ vers 
$\rposn{n+\ell}$, et 
il reste à montrer 
que $\tau(\Sigma_k)$ est un $c$-polytope
purement de dimension $n$. 

D'après le théorème  1.2 de \cite{ducros2012b}, il existe un $c$-polytope $P$ de dimension $\leq n$
de $\rposn {n+\ell}$ tel que $\tau(\mathscr X\an)=P$. 
Soit $\pi\colon \rposn {n+m}\to \rposn n$ la projection sur les
$n$ premières coordonnées. 
Soit $Q$ l'ensemble des points $z$ de $P$ tels que l'image de tout voisinage $c$-polytopal
de $z$ dans $P$ par $\pi$ soit de dimension $n$. C'est un
$c$-polytope purement de dimension $n$ : si l'on choisit une décomposition cellulaire
$\mathscr C$ de $P$, c'est la réunion des cellules
$C\in \mathscr C$ telles que $\dim C=n$ et $\pi_{|C}$ soit injectif ; 
remarquons que $\pi \colon Q\to \rposn n$ est à fibres finies. 

Nous allons
démontrer que $\tau(\Sigma	_k)=Q$. Pour cela, nous allons utiliser
la caractérisation suivante de $\Sigma_k$, fournie
par le théorème 3.4
de \cite{ducros2012b}, en posant
$\abs f=(f_1,\ldots, f_n)$ : c'est l'ensemble des points $y$ de $\mathscr X\an$
tels que pour tout voisinage analytique $\Gamma$-strict compact $V$
de $y$ dans $\mathscr X\an$, l'image $\abs f(V)$ (qui est un $c$-polytope
de $\rposn n$ d'après le théorème 1.2 de \cite{ducros2012b}) soit de dimension $n$. 

Soit $z\in P\setminus Q$. Par définition, il existe un voisinage $c$-polytopal
compact $R$ de $z$ dans $P$ dont la projection sur $\rposn n$ est de dimension $<n$. 
L'image réciproque $\tau^{-1}(R)$  est un domaine
analytique $\Gamma$-strict compact de $\mathscr X\an$, dont l'image
sous $\abs f$ est un $c$-polytope
de dimension $<n$. Par conséquent, l'intérieur topologique de
$\tau^{-1}(R)$  dans $\mathscr X\an$ ne rencontre pas $\Sigma_k$.
Il s'ensuit que l'intérieur topologique de
$R$ dans $P$ ne rencontre pas $\tau(\Sigma_k)$. En particulier, $z\notin\Sigma_k$
et $\Sigma_k\subset Q$. 

Réciproquement, soit $z\in Q$ et soit $R$ un voisinage $c$-polytopal compact de 
$z$ dans $P$. Par définition de $Q$ , l'image de $R$ dans $\rposn n$ est de dimension $n$. 
Par conséquent, $\abs f(\tau^{-1}(R))$ est de dimension $n$. Ceci implique que le domaine
analytique compact et $\Gamma$-strict $\tau^{-1}(R)$ rencontre $\Sigma_k$, et partant que
$R$ rencontre $\tau(\Sigma_k)$. Ceci valant pour tout $R$, le point $z$ est adhérent à $\tau(\Sigma_k)$. 
Par propreté de $\tau|_{\Sigma_k}$, le point $z$ appartient à $\tau(\Sigma_k)$.

On a donc bien $\tau(\Sigma_k)=Q$, si bien que $\Sigma_k$ est un $c$-squelette
purement de dimension $n$, comme annoncé. 
Dans le diagramme commutatif 
\[\begin{tikzcd}
\Sigma_k\ar[r, "\tau"]\ar[d,"\phi"']&Q\ar[d,"\pi"]\\
S_{n,k}\ar[r,"\sim"]&\rposn n\end{tikzcd}\]
les flèches horizontales sont des isomorphismes $c$-linéaires par morceaux, 
et la projection $\pi$ est à fibres finies. Il s'ensuit que $\Sigma_k\to S_{n,k}$
est $c$-linéaire par morceaux et à fibres finies. 

\subsubsection{Propriétés supplémentaires de la flèche
$\Sigma_k\to S_{n,k}$}\label{sigma-s-fini}
Le morphisme $\phi$ étant dominant, il existe un ouvert non vide
$\mathscr U$ de $\gmc n$ tel que $\phi^{-1}(\mathscr U)\to \mathscr U$ soit fini et plat
de degré $>0$ ; ceci entraîne que $\phi^{-1}(\mathscr U\an)\to \mathscr U\an$ est 
propre, ouvert, surjectif  et à fibres finies ; puisque $S_{n,k}\subset \mathscr U\an$, on en déduit que 
$\Sigma_k\to S_{n,k}$ est propre, ouvert, surjectif (et à fibres finies, ce que nous savions déjà). 

\subsubsection{Effet de l'extension des scalaires}
Soit maintenant $L$ une extension compète de $k$. 
Conservons les notations $f_1,\ldots, f_n, g_1,\ldots, g_\ell$
introduites au paragraphe 
\ref{description-sigma}.
Le raisonnement qu'on y suit montre
qu'il existe une famille finie de fonctions rationnelles
$b_1,\ldots, b_r$ sur $\mathscr X_L$
sur $\mathscr X_L$ telles que
$(\abs{f_1},\ldots, \abs{g_1},
\ldots, \abs{g_\ell}, \abs{b_1},\ldots, \abs {b_r})$
induise un homéomorphisme
entre $\Sigma_L$ et
un $c$-polytope $\Pi$ de $\rposn{n+m+r}$,
homéomorphisme qui définit la structure $c_L$-linéaire
par morceaux de $\Sigma_L$
(si les $g_i$ séparent encore les prolongements
à $\kappa(\mathscr X_L)$ des valuations de Gauß sur $L(f_1,\ldots f_n)$  on peut prendre pour
$(b_j)$ la famille vide ; mais en général, on a besoin de fonctions supplémentaires). On déduit alors du diagramme commutatif
\[\begin{tikzcd}\Sigma_L\ar[r]\ar[d]&\Pi\ar[d]\\
\Sigma_k\ar[r]&Q\end{tikzcd},\]
dont la flèche verticale de droite est induite par la projection sur les $n+m$ premières coordonnées
et dont les flèches horizontales sont des isomorphismes,
que $\Sigma_L\to \Sigma_k$ est $c_L$-linéaire par morceaux. L'application 
$S_{n,L}\to S_{n,k}$ est bijective, et $\Sigma_L\to S_{n,L}$ est à fibres finies ; il s'ensuit que $\Sigma_L\to S_{n,k}$
est à fibres finies, et il en va \textit{a fortiori} de même
de $\Sigma_L\to \Sigma_k$. Soit $y$ un point de $\Sigma_k$ et soit $s$
son image sur $S_{n,k}$ ; remarquons que
$\hr y$ est une extension finie de $\hr s$
(\ref{sigma-s-fini}). Soit $t$ l'unique antécédent de $s$ sur $S_{n,L}$. L'ensemble
des antécédents de $y$ sur $\Sigma_L$ s'identifie alors à $\mathscr M(\hr y\otimes_{\hr s}\hr t)$, 
qui est non vide ; ainsi, $\Sigma_L\to \Sigma_k$ est surjective. Remarquons
par ailleurs que cette application est propre 
car $\Sigma_L$ est fermé dans $\mathscr X_L\an$, lequel est topologiquement propre sur 
$\mathscr X\an$. Il reste à s'assurer qu'elle est ouverte ; ce sera fait à la fin du paragraphe
suivant. 

\subsubsection{Stabilisation après une extension finie et conclusion}
La stabilisation de $\Sigma_k$ après une extension
finie séparable de $k$ provient de 
ce qui précède et de l'existence d'une extension 
finie séparable $F$ de $k$ et
d'une famille finie de fonctions 
rationnelles non nulles sur $\mathscr X_F$
séparant \textit{universellement}
(c'est-à-dire après toute extension valuée
de $F$) 
les prolongements des valuations de Gauß 
de $F(f_1,\ldots, f_n)$ à $\kappa(\mathscr X_F)$
(\cite{ducros2012b}, théorème 2.8 b). Remarquons que $\Sigma_F$ est 
l'image réciproque de $\Sigma_k$ sur $\mathscr X\an_F$, et que le
morphisme fini et plat $\mathscr X\an_F\to \mathscr X\an$ est ouvert ; 
par conséquent, $\Sigma_F\to \Sigma_k$ est ouverte.

Soit $L$ une extension complète quelconque de $k$ et soit $M$ une extension complète composée de $F$ et $L$. 
Dans le diagramme commutatif 
\[\begin{tikzcd}
\Sigma_M\ar[d]\ar[r]&\Sigma_L\ar[d]\\
\Sigma_F\ar[r]&\Sigma_k\end{tikzcd}\]
la flèche verticale de gauche est un homéomorphisme, 
la flèche horizontale du haut est surjective, et la flèche horizontale du bas est ouverte.  
Il s'ensuit que $\Sigma_L\to \Sigma_k$ est ouverte. 

\subsection{Preuve du théorème \ref{imrec-squel} dans le cas général}
On ne suppose plus désormais que $m=1$, et l'on reprend donc les notations
$\phi_1,\ldots, \phi_m$ de l'énoncé.

\subsubsection{}
Il suffit de démontrer que $\Sigma_k$ est un $c$-squelette : les assertions
(2), (3), (4) et (5) découleront alors du fait qu'elles valent dans le cas $m=1$ (qu'on vient de traiter) et que $\Sigma_k$
est réunion finie de ses parties $c$-linéaires par morceaux fermées $\phi_i^{-1}(S_{n,k})$. On peut supposer
(en raisonnant G-localement) que $X$ est affinoïde. 

Il suffit même simplement de montrer que $\Sigma_k$ est contenu
dans un $c$-squelette $\mathrm T$. Supposons en effet que ce soit le cas. 
On sait par le cas $m=1$ déjà traité que chacun des $\phi_j^{-1}(S_{n,k})$ est un $c$-squelette, donc est une partie
$c$-linéaire par morceaux fermée de $\mathrm T$. Leur réunion est alors une partie $c$-linéaire par morceaux de $\mathrm T$, 
donc un $c$-squelette. 

\subsubsection{}
Chacun des $\phi_i$ est donné par $n$ fonctions analytiques inversibles $f_{i1},\ldots, f_{in}$. Renumérotons pour simplifier les $f_{ij}$
avec un seul indice, ce qui fournit une famille $(f_1,\ldots, f_N)$ de fonctions inversibles sur $X$. L'ensemble $\Sigma_k$ est alors
contenu dans l'ensemble $\mathrm T$ des points $x$ de $X$ tels que le degré de transcendance 
de $(\widetilde{f_i(x)})_i$ sur $\widetilde k$ soit égal à $n$. 
Notons que $\mathrm T$
est fermé (c'est la réunion des images réciproques de $S_{n,k}$ sous \textit{tous} les morphismes
de $X$ vers $\gma n$  définis
par les différents $n$-uplets  qu'on peut extraire de $(f_1,\ldots, f_N)$). 
Nous allons montrer par récurrence sur $N$
que $\mathrm T$ est un $c$-squelette. Si $N<n$ alors $\mathrm T$ est vide, et si $N=n$
alors $\mathrm T$ est un $c$-squelette par le cas $n=1$ déjà traité. 

Supposons $N>n$ et le résultat vrai pour $N-1$. 
Soit $x\in \mathrm T$. 
Par définition, 
le degré de transcendance de $(\widetilde{f_i(x)})_i$ sur $\widetilde k$ est égal à $n$.
Posons $a_i=\widetilde{f_i(x)}$ pour tout $i$. 
Une application de l'avatar gradué du 
«lemme de normalisation de Noether torique» de Maclagan et Sturmfels (\cite{MaclaganSturmfels-2015}, 
preuve de la proposition 3.2.7, page 105)
assure alors l'existence d'une matrice $M=(m_{ij})$
de $\mathbf{GL}_N(\mathbf Z)$ telle que si l'on pose
$b_i=\prod_j a_j^{m_{ij}}$ pour tout $i$, la propriété suivante soit satisfaite : il existe $d>0$ et des monômes $c_{d-1},\ldots, c_0$
en $b_1,\ldots, b_{N-1}$, à exposants entiers relatifs, tels que
$b_N^d+c_{d-1}b_N^{d-1}+\ldots, +c_0=0$. 
Posons $g_i=\prod_j f_j^{m_{ij}}$ pour tout $i$. 
Par ce qui précède, il existe des monômes $\mu_{d-1},\ldots, \mu_0$ en 
$g_1,\ldots, g_{N-1}$, à exposants entiers relatifs, tels que
$\abs{g_N^d(x)+\mu_{d-1}(x)g_N^{d-1}(x)+\ldots+\mu_0(x)}<\max_i \abs{\mu_i(x)g_N^i(x)}$ 
(en posant $\mu_d=1$). Le lieu de validité de l'inégalité 
$\abs{g_N^d+\mu_{d-1}g_N^{d-1}+\ldots+\mu_0}<\max_i \abs{\mu_ig_N^i}$ est donc un voisinage 
ouvert $U$ de $x$. Par construction, $\widetilde{g_N(y)}$ est algébrique
sur $\widetilde k( {g_i(y)})_{1\leq i\leq N-1}$ pour tout $y\in U$. Il s'ensuit (la famille des $g_i$ 
se déduisant de celle des $f_i$ par une transformation monomiale inversible) que si $y\in U$
les assertions suivantes sont équivalentes : 

\begin{enumerate}[i]
\item $y\in \mathrm T$ ; 
\item le degré de transcendance de $(\widetilde{g_i(y)})_{1\leq i\leq N}$ sur $\widetilde k$ est égal à $n$ ; 
\item le degré de transcendance de $(\widetilde{g_i(y)})_{1\leq i\leq N-1}$ sur $\widetilde k$ est égal à $n$. 
\end{enumerate}

La propriété (iii) couplée à l'hypothèse de récurrence montre alors que $U\cap T$ est un $c$-squelette. 
Ceci valant quel que soit $x\in \mathrm T$, le fermé $\mathrm T$ de $X$ est bien un 
$c$-squelette. \qed

\section{L'espace friable associé à un espace domanial}\label{friable-general}
Nous nous proposons d'associer à un espace domanial 
un espace topologique 
possédant
une base d'ouverts compacts, que nous appellerons \textit{l'espace friable} associé.

\subsection{Brefs rappels sur les filtres}
Nous allons commencer par quelques rappels sur les filtres. On fixe un ensemble
$X$ et une sous-algèbre de Boole $\mathsf B$ de $\mathscr P(X)$.

\begin{defi}
Un \textit{filtre} d'éléments de $\mathsf B$ est un sous-ensemble $\mathscr F$ de $\mathsf B$ 
tel que : 
\begin{enumerate}[a]
\item $\mathscr F$ est stable par intersections finies ; 
\item $\emptyset\notin \mathscr F$ ; 
\item si $V$ appartient à $\mathscr F$ et si $W$ est un élément de $\mathsf B$ contenant $V$ alors
$W$ appartient à $\mathscr F$.
\end{enumerate}

Un \textit{ultrafiltre} d'éléments de $\mathsf B$ est un filtre d'éléments de $\mathsf B$ qui est maximal pour l'inclusion. 
\end{defi}

\begin{rema}
L'axiome (a) entraîne que tout filtre
d'éléments de $\mathsf B$ contient $X$ (considérer l'intersection vide). Si $X$ est lui-même vide, 
on obtient une contradiction avec l'axiome (b) : si $X$ est vide, il n'existe donc pas de filtres d'éléments de
$\mathsf B$. 
\end{rema}

\begin{rema}\label{fitre-v-pascv}
Si $\mathscr F$ est un filtre d'éléments de $\mathsf B$ et si $V\in \mathscr F$
alors $X\setminus V$ ne peut appartenir à $\mathscr F$, sans quoi 
$\emptyset=V\cap (X\setminus  V)$ appartiendrait à $\mathscr F$. 
\end{rema}

\subsubsection{}
Soit $\mathscr F$
un filtre d'éléments de $\mathsf B$. On déduit du lemme de Zorn que $\mathscr F$
est contenu dans un ultrafiltre d'éléments de $\mathsf B$.

Soit $V$ un élément de $\mathsf B$ n'appartenant pas à $\mathscr F$. Supposons
que $X\setminus V$ n'appartient pas non plus à $\mathscr F$. Pour tout $U\in \mathscr F$ l'intersection
$U\cap V$ est alors non vide : en effet dans le cas contraire $U$ serait contenu dans $X\setminus V$
qui appartiendrait donc à $\mathscr F$, contredisant nos hypothèses. L'ensemble des éléments de $\mathsf B$
contenant une partie de la forme $U\cap V$ avec $U\in \mathscr F$ est alors un filtre d'éléments de $\mathsf B$
contenant strictement $\mathscr F$. 
On en déduit que si $\mathscr F$ est un ultrafiltre alors $V\in \mathscr F$ ou 
$(X\setminus V)\in \mathscr F$ pour tout $V\in \mathsf B$.

Réciproquement, si $V\in \mathscr F$ ou 
$(X\setminus V)\in \mathscr F$ pour tout $V\in \mathsf B$ alors $\mathscr F$ est un ultrafiltre : en effet
soit $\mathscr G$ un filtre d'éléments de $\mathsf B$ contenant $\mathscr F$ et soit $V\in \mathscr G$ ; 
si $V$ n'appartenait pas à $\mathscr F$ alors $X\setminus V$ appartiendrait à $\mathscr F$
et partant à $\mathscr G$, ce qui est absurde puisque $V\in \mathscr G$ (remarque
\ref{fitre-v-pascv}).

En conséquence, $\mathscr F$ est un ultrafiltre si et seulement si $V\in \mathscr F$ ou $(X\setminus V)\in \mathscr F$
pour tout $V\in \mathsf B$. 

Supposons que ce soit le cas et soit $(V_i)_{i\in I}$ une famille finie d'éléments de $\mathsf B$ dont la réunion $V$ appartient
à $\mathscr F$. Il existe alors $i$ tel que $V_i\in \mathscr F$. En effet dans le cas contraire chacun des $X\setminus V_i$
appartiendrait à $\mathscr F$ puisque ce dernier est un ultrafiltre, si bien que leur intersection $X\setminus V$ appartiendrait
également à $\mathscr F$, ce qui est absurde, là encore en vertu de la remarque
\ref{fitre-v-pascv}. 

\begin{exem}
Si $x\in X$, l'ensemble des $V\in \mathsf B$ tels que $x\in V$ est un ultrafiltre d'éléments de $\mathsf B$ ; 
un tel ultrafiltre est dit \textit{principal}. 
\end{exem}

\subsubsection{}
Notons $\widetilde X$ l'ensemble des ultrafiltres d'éléments de $\mathsf B$. On  peut le voir
comme un sous-ensemble de $\{0,1\}^{\mathsf B}$, et on le munit de la topologie induite
par la topologie produit. Compte-tenu des axiomes qui caractérisent les filtres, et du fait qu'un 
filtre $\mathscr F$ d'éléments de $\mathsf B$ est un ultrafiltre si et seulement si
$V\in \mathscr F$ ou $(X \setminus V)\in \mathscr F$ pour tout $V\in \mathsf B$, on voit 
que $\widetilde X$ est fermé dans $\{0,1\}^{\mathsf B}$ ; c'est donc un espace topologique compact
totalement discontinu. Les parties de $\widetilde X$ de la forme $\{\mathscr F\in \widetilde X, V\in \mathscr F\}$ 
où $V$ parcourt $\mathsf B$ sont exactement les ouverts compacts de $\widetilde X$, et forment
une base de sa topologie. 

On dispose d'une application naturelle de $X$ dans $\widetilde X$, qui envoie un point $x$ sur
l'ultrafiltre principal correspondant. Elle est injective si et seulement si $\mathsf B$ sépare les points de $X$. 

\subsubsection{}\label{vtilde-xtilde}
Soit $V$ un élément de $\mathsf B$. L'intersection $\mathscr P(V)\cap \mathsf B$ est une sous-algèbre de 
Boole de $\mathscr P(V)$. Si $\mathscr F$ est un ultrafiltre d'éléments de $\mathsf B$ contenant $V$, l'ensemble
des éléments de $\mathscr P(V)\cap \mathsf B$ appartenant à $\mathscr F$ est un ultrafiltre d'éléments de 
 $\mathscr P(V)\cap \mathsf B$. Si $\mathscr G$ est un ultrafiltre d'éléments de 
 $\mathscr P(V)\cap \mathsf B$, l'ensemble des éléments de $\mathsf B$ contenant un élément de $\mathscr G$ est un ultrafiltre 
 d'éléments de $\mathsf B$. 
 
 Ces constructions établissent un homéomorphisme entre l'ouvert compact de $\widetilde X$ constitué des
 ultrafiltres contenant $V$, et l'espace $\widetilde V$ des ultrafiltres d'élément de $\mathscr P(V)\cap \mathsf B$.  
 
 \subsubsection{}\label{conventions-filtres}
 Le plus souvent, nous préfèrerons penser à 
 un élément de $\widetilde X$ comme à un point auquel \textit{correspond} un ultrafiltre, plutôt que comme
 à un ensembles de parties de $X$, et nous emploierons donc plus volontiers la notation $x$ (ou $y$, ou $z$\ldots)
 que $\mathscr F$ (ou $\mathscr G$, ou $\mathscr H$) pour désigner un élément de $\widetilde X$. 
 
 Si $V$ est un élément de $\mathsf B$, nous identifierons $\widetilde V$ à un ouvert compact de 
 $\widetilde X$ \textit{via} l'homéomorphisme décrit en \ref{vtilde-xtilde} ; la flèche
 $V\mapsto \widetilde V$ établit une bijection de $\mathsf B$
sur l'ensemble des ouverts compacts de $\widetilde X$. 

Avec ces conventions, l'ultrafiltre
 correspondant à un point $x$ de $\widetilde X$ est l'ensemble des éléments $V$ 
 de $\mathsf B$ tels que $x\in \widetilde V$. 
 
 Si $\mathsf B$ sépare les points de $X$ nous utiliserons l'injection naturelle $X\hookrightarrow \widetilde X$
 pour identifier $X$ à un sous-ensemble de $\widetilde X$. On a alors $V=\widetilde V\cap X$ pour tout
 $V\in \mathsf B$ ; en particulier, $X$ est dense dans $\widetilde X$. 
 
 \subsubsection{}
 Soit $Y$ un ensemble et soit $\mathsf C$ une sous-algèbre de Boole de $\mathscr P(Y)$. Soit
 $\widetilde Y$ l'espace des ultrafiltres d'éléments de $\mathsf C$, et soit $f\colon Y\to X$ une application telle
 que $f^{-1}(V)\in \mathsf C$ pour tout $V\in \mathsf B$. L'application $f$ induit une application continue 
$\widetilde f$
de $\widetilde Y$ vers $\widetilde X$ : en termes d'ultrafiltres, elle envoie un ultrafiltre $\mathscr G$ d'éléments
 de $\mathsf C$ sur l'ensemble des $V\in \mathsf B$ tels que $f^{-1}(V)\in \mathscr G$ ; en termes
 plus topologiques, elle envoie un point $y$ de $\widetilde Y$ sur l'unique élément de l'intersection 
 des ouverts compacts $\widetilde V$, où $V$ parcourt l'ensemble des éléments de $\mathsf B$ tels que
 $y\in \widetilde{f^{-1}(V)}$. Autrement dit, $\widetilde f$
 est caractérisée par le fait que 
 $\widetilde f^{-1}(\widetilde V)=\widetilde{f^{-1}(V)}$
 pour tout $V\in \mathsf B$. 
 
 Il découle de la construction de $\widetilde f$ que le diagramme 
 \[
 \begin{tikzcd}
 Y\ar[r,"f"]\ar[d]
 &X\ar[d]\\
 \widetilde Y\ar[r, "\widetilde f"]&\widetilde X
 \end{tikzcd}
 \]
 est commutatif. Si $\mathsf B$ sépare les points de $X$ et si $\mathsf C$ sépare les points
 de $Y$ l'application $\widetilde f$ apparaît donc, modulo les plongements
 naturels de $X$ dans $\widetilde X$ et de $Y$ dans $\widetilde Y$, 
 comme un prolongement de $f$.

\subsection{Le cas d'un espace domanial compact}\label{friable-compact}
On fixe un espace domanial \textit{compact}
$X$.

\subsubsection{}
\label{def-friable}
Nous noterons $\domc X$ l'ensemble des domaines
compacts de $X$, et $\dom X$ la sous-algèbre de Boole de
$\mathscr P(X)$ engendrée
par $\domc X$ ; nous désignerons par 
$\widetilde X$ l'ensemble des ultrafiltres d'éléments 
de $\dom X$. C'est un espace topologique compact et totalement discontinu,
que nous appellerons l'\textit{espace friable} associé à $X$. 

Si $V\in \dom X$ nous noterons $\widetilde V$ l'ensemble des ultrafiltres d'éléments 
de $\dom X$ contenu dans $V$, que nous identifierons à un ouvert compact
de $\widetilde X$ (\ref{conventions-filtres}). 

Les éléments de $\dom X$ séparent les points de $X$ (c'est déjà le cas des 
éléments de $\domc X$), si bien que l'on dispose d'une injection
naturelle $X\hookrightarrow \widetilde X$
d'image dense. Elle n'est pas continue en général : 
on a en effet $\widetilde V\cap X=V$ pour tout $V\in \dom X$, et un tel $V$
peut très bien n'être pas ouvert.


\subsubsection{}
Soit $x\in \widetilde X$ et soit $\mathsf E$ l'ensemble des éléments
$V$
de $\dom X$ tels que $x\in \widetilde V$. L'intersection d'une famille finie d'éléments de
$\mathsf E$ appartient à $\mathsf E$ et est en particulier non vide. Il en résulte 
par compacité que $U:=\bigcap_{V\in \mathsf E} \overline V$ est non vide. 
Le compact $U$ est en fait un singleton. En effet, soient $y$ et $z$ deux points distincts de $X$. 
Soit $V$ un voisinage domanial compact de $y$ dans $X\setminus \{z\}$
 et soit $W$ le complémentaire
de $V$. Puisque $X=V\coprod W$ on a 
$\widetilde X=\widetilde V\coprod \widetilde W$ ; en particulier $x$
appartient à $\widetilde V$ ou à $\widetilde W$, si bien que $V\in \mathsf E$ ou $W\in \mathsf E$ ; 
dans le premier cas $z\notin U$ puisque $z\notin \overline V$, et dans le second  cas $y\notin U$
puisque $y\notin \overline W$ ; ainsi $y$ et $z$ ne peuvent appartenir tous deux à $U$,
qui est dès lors un singleton, comme annoncé. 
Son unique point sera noté $\rho_X(x)$.

\subsubsection{}
Par construction, $\rho_X$ est une rétraction de l'inclusion 
$X\hookrightarrow \widetilde X$. Si $V$
appartient à $\domc X$ 
le diagramme 
\[\begin{tikzcd}
\widetilde V\ar[r,hook]\ar[d,"\rho_V"']&\widetilde X\ar[d,"\rho_X"]\\
V\ar[r,hook]&X
\end{tikzcd}\]
est commutatif ; on pourra donc se permettre d'écrire $\rho$ au lieu de $\rho_X$, 
ce que nous ferons le plus souvent. 

Soit $x$ un point de $\widetilde X$ et soit $V$
un voisinage domanial compact de $\rho(x)$ ; soit $U$ le complémentaire de $V$
dans $X$. Par construction, $\rho(x)$ n'appartient pas
à $\overline U$, ce qui entraîne que $x$ ne peut appartenir à $\widetilde U$ ; par conséquent, 
$x$ appartient à $\widetilde V$. Ainsi, $\widetilde V$ est un voisinage ouvert de $x$, 
dont l'image par $\rho$ est contenue dans $V$ (et même égale à $V$ tout entier). 
Il en résulte que $\rho$ est continue. 

\subsubsection{}
Soit $f\colon Y\to X$ un morphisme d'espaces domaniaux compacts. 
Si $V$ appartient à $\domc X$ alors $f^{-1}(V)$ appartient
à $\domc Y$. Il s'ensuit que $f^{-1}(V)$ appartient à 
$\dom Y$ pour tout $V\in \dom X$, et $f$ se prolonge
par conséquent en 
une application continue $\widetilde f$ de $\widetilde Y$ vers
$\widetilde X$. 
Il résulte des constructions que le diagramme
\[
\begin{tikzcd}
\widetilde Y\ar[r,"\widetilde f"]\ar[d,"\rho"']&\widetilde X \ar[d,"\rho"]\\
Y\ar[r,"f"]&X\end{tikzcd}
\]
commute.

\subsubsection{Cas d'une partie localement fermée}
\label{breve-comp-comp}
Soit $E$ une partie compacte
de $X$. 
Elle hérite d'une structure domaniale naturelle, 
pour laquelle l'inclusion de $E$ dans $X$ est un morphisme
(\ref{locferm-domanial}). On dispose par ce qui précède
d'une application continue naturelle de 
$\widetilde E$ vers $\widetilde X$. Cette application induit un homéomorphisme
entre $\widetilde E$ et une partie compacte de $\widetilde X$. 

En effet, nous allons montrer que $\widetilde E$ s'injecte 
dans $\widetilde X$, ce qui permettra de conclure par compacité. 
Soient donc $x$ et $y$ deux points distincts
de $\widetilde  E$. Quitte à échanger $x$ et $y$, 
on peut supposer qu'il existe $W\in \domc E$ tel que
$x\in \widetilde W$ et $y\notin \widetilde W$. Par définition de la
structure d'espace domanial sur $E$ et par compacité
de $W$, il existe une famille finie 
de domaines compacts $(V_i)$ de $X$ tels 
que $W=\bigcup W_i$ où l'on a posé $W_i=V_i\cap E$. 
Il existe alors $i$ tel que $x\in \widetilde W_i$, 
et $y$ n'appartient pas à $\widetilde W_i\subset \widetilde W$. 
Par  la définition même de l'application de $\widetilde E$
vers $\widetilde  X$ l'ouvert compact
$\widetilde V_i$
de $\widetilde X$ contient l'image de $x$
mais pas celle de $y$, ce qui montre que ces
deux images sont distinctes. 

Nous nous permettrons donc d'identifier $\widetilde E$ 
à une partie compacte de $\widetilde X$. Modulo cet abus on a
$\rho(x)\in E$ pour tout $x\in \widetilde E$ et $\widetilde E\cap X=E$.

\subsection{Espace friable associé à un espace domanial général}
Soit
$X$ un espace domanial ; on ne le suppose plus compact (ni même séparé). 
Nous désignons encore par $\domc X$ l'ensemble des domaines compacts de $X$. 
L'ensemble $\domc X$ est naturellement ordonné par inclusion, et cofiltrant (l'intersection de deux 
éléments de $\domc X$ n'appartient pas forcément à $\domc X$, l'espace $X$ n'étant pas
supposé séparé ; mais elle est G-recouverte par des éléments de $\domc X$).

Si $V$ est un élément de $\domc X$ nous noterons $\widetilde V$ l'espace friable associé, 
et $\rho \colon \widetilde V\to V$ la rétraction continue naturelle de $V\hookrightarrow \widetilde V$
(voir la sous-section \ref{friable-compact} ci-dessus
pour ces différentes notions).

\subsubsection{}
Nous noterons $\breve X$ la colimite
topologique des $\widetilde V$ pour $V$ parcourant $\domc X$. 
C'est un espace topologique qui s'identifie à ce que nous avons déjà noté 
$\widetilde X$ lorsque $X$ est compact, et que nous appellerons encore l'espace friable associé à $X$. 
Pour tout domaine
$V$ de $X$, la flèche naturelle $\breve V\to \breve X$ est une immersion ouverte, et les
$\breve V$ pour $V\in \domc X$ forment une base d'ouverts compacts de $\breve X$.
Si $X$ est séparé $\breve X$ est séparé, et les $\breve V$ en constituent donc une base d'ouverts fermés. 

Notre choix de la notation $\breve X$ plutôt que $\widetilde X$, malgré la coïncidence des deux objets dans le cas compact, 
s'explique ainsi. Si $V$ est un domaine compact de $X$, si $W$ est un domaine compact de $V$ et si l'on pose
$U=V\setminus W$, alors $\widetilde U$ a été défini à la sous-section \ref{friable-compact} : c'est l'espace
des ultrafiltres de parties de $\dom V$ contenues dans $U$, qui s'identifie à un ouvert compact
de 
$\widetilde V$. Quant à $\breve U$, c'est la réunion des $\widetilde Z$ où $Z$ parcourt l'ensemble
des domaines compacts de $U$ ; c'est donc un ouvert de $\widetilde  U$, qui est strict dès que $U$ n'est pas compact.

\subsubsection{}
Lorsque $V$ parcourt $\domc X$, les inclusions naturelles 
$V\hookrightarrow \breve V$ et leurs rétractions continues $\rho \colon \breve V\to V$
se recollent en une injection $X\hookrightarrow \breve X$ et une 
rétraction continue $\rho \colon \breve X\to X$ de cette dernière. 

\subsubsection{Fonctorialité.} Soit $f\colon Y\to X$ un morphisme d'espaces domaniaux. 
Soit $W$ un domaine compact de $Y$ et soit $V$ un domaine compact de $X$ tel
que $f(W)\subset V$. L'application $f$ induit une application $\breve f$ de $\breve W$ vers
$\breve V$. 
Lorsque $V$ et $W$ varient les applications continues ainsi définies se recollent en une application continue
$\breve f$ de $\breve Y$ vers $\breve X$. On vérifie par réduction au cas compact que le diagramme 
\[\begin{tikzcd}
Y\ar[r,"f"]\ar[d,hook]&X\ar[d,hook]\\
\breve Y\ar[r,"\breve f"]\ar[d,"\rho"']&\breve X\ar[d,"\rho"]\\
Y\ar[r,"f"']&X\end{tikzcd}
\] est commutatif.

\subsubsection{Cas d'une partie localement fermée}
\label{breve-locferm}
Soit $E$ une partie localement fermée de $X$. 
Elle hérite d'une structure domaniale naturelle, 
pour laquelle l'inclusion de $E$ dans $X$ est un morphisme
On dispose dès lors d'une application 
naturelle de 
$\breve E$ vers $\breve X$. Cette application induit un homéomorphisme
entre $\breve E$ et une partie localement
fermée de $\breve X$, fermée si $E$ est fermé dans $X$. 

En effet, choisissons un ouvert $U$
de $X$ dans lequel $E$ est fermé. 
On peut  pour montrer l'assertion remplacer
$X$ par $U$, et partant supposer $E$ fermé
dans $X$. Il suffit alors de prouver que
$\breve E\times_{\breve X}\breve V\to \breve V$
induit un homéomorphisme entre 
$\breve E\times_{\breve X}\breve V$
et une partie fermée
de $\breve V$ pour tout domaine compact
$V$ de $X$. Mais pour un tel domaine, 
$\breve E\times_{\breve X}\breve V$ est par
construction l'espace friable
associé à
$E\cap V$ ; on se ramène ainsi au cas
où $X$ est compact, et où $E$ est une partie compacte de
$X$, qui a été traité en \ref{breve-comp-comp}. 

\subsubsection{Raffinement de la structure domaniale}\label{dom-x1-x2}
Supposons données deux structures domaniales
ayant même espace topologique sous-jacent $X$, que nous 
notons $X_1$ et $X_2$. On suppose que $\dom {X_2}\subset \dom{X_1}$, 
ce qui signifie précisément que l'identité de $X$ induit un morphisme
d'espaces domaniaux $X_1\to X_2$. 
On dispose alors en particulier d'une application continue 
$\breve X_1\to \breve X_2$, qui est propre : si $V$ est un domaine compact
de $X_1$, dont on note $V_1$ et $V_2$ les structures domaniales induites par celles de
$X_1$ et $X_2$, l'image réciproque du compact
$\breve V_1$ sur $\breve X_2$ s'identifie au compact $\breve V_2$. 
Le diagramme \[\begin{tikzcd}
\breve V_1\ar[r]&\breve V_2\\
V\ar[u,hook]\ar[ur,hook]&\end{tikzcd}\]
étant commutatif, l'image de la flèche $\breve V_1\to \breve V_2$ contient le
sous-ensemble dense $V$ de $\breve V_2$. Cette image étant compacte, c'est $\breve V_2$
tout entier. Il s'ensuit que $\breve X_1\to \breve X_2$ est surjective. 

\begin{rema}
On peut démontrer (nous ne nous servirons pas de ce fait) que 
$\breve X$ est en bijection avec l'ensemble $\mathsf P$
des points du topos
associé au site domanial de $X$. 
Cette bijection n'est pas un homéomorphisme : la
topologie sur $\breve X$ héritée de son identification avec
$\mathsf P$ est la topologie engendrée par les $\breve V$
où $V$ est un domaine compact de $X$, qui est 
moins fine que celle que nous utilisons ici : 
si $W$ et $V$ sont deux domaines compacts de $X$ avec $W\subset V$ alors $\breve V\setminus \breve W$ sera 
toujours ouvert pour la topologie friable considérée ici, mais pas en général pour la topologie induite par
celle de $\mathsf P$.

\end{rema}

\section{Espaces friables en géométrie analytique}\label{friable-analytique}

Nous nous proposons maintenant d'étudier en détail l'espace $\breve X$ lorsque
$X$ est un espace $k$-analytique $\Gamma$-strict, et de montrer qu'il s'identifie
à l'espace de Huber-Kedlaya (muni de sa topologie constructible).
C'est en fait par la construction de ce dernier que nous allons commencer, 
dans le cas affinoïde.

\subsection{Espace $\Gamma$-adique associé à un espace
$k$-affinoïde $\Gamma$-strict}
Soit $X$ un espace $k$-affinoïde
$\Gamma$-strict ; on le munit de sa structure 
domaniale $\Gamma$-strictement $k$-analytique. 
Le but de ce qui suit
est de définir l'espace $\Gamma$-adique $X\gad$
associé à $X$ en termes purement valuatifs, puis
de montrer qu'il s'identifie à l'espace friable
$\breve X$. 
Nous  allons utiliser librement les définitions et notations de
\ref{subsection-valuations-bridees}.

\subsubsection{Description  ensembliste 
de $X\gad$}
Notons $A$
l'algèbre des fonctions analytiques sur $X$. 
Comme $X$ est $\Gamma$-strict, la semi-norme spectrale $\|\cdot\|_\infty$
de $A$ est 
à valeurs dans
le monoïde
$(\abs{k^\times}\cdot \Gamma)^\Q\cup\{0\}$, et nous 
dirons qu'une $k$-valuation $\abs{k^\times}\cdot \Gamma$-colorée $x$ sur $A$ est
\textit{bornée} si $\abs{a(x)}\leq \|a\|_\infty$
pour tout $a\in A$. Pour que $x$ soit bornée, il suffit en fait 
que $\abs{a(x)}\leq \|a\|_\infty$ pour tout $a\in A$ telle que
$ \|a\|_\infty\in \Gamma$. Supposons en effet que ce soit le cas, et
soit $b\in A$. Si $\|b\|_\infty=0$ alors $b$ est nilpotente si bien
que $b(x)=0$, et $\abs{b(x)}$ est bien majoré par $\|b\|_\infty=0$. Sinon il existe 
un entier $n\geq 0$, un élément $\lambda$ de $k^\times$ et un élément
$r$ de $\Gamma$ tels que $\|b\|_\infty^n=r\abs \lambda$. Si l'on pose
$a=b^n/\lambda$ on a donc $\|a\|_\infty\in \Gamma$. Par hypothèse ceci
entraîne que $\abs{a(x)}\leq \|a|\|_\infty$, d'où il suit
immédiatement que $\abs{b(x)}\leq \|b|\|_\infty$. 

Nous définissons $X\gad$ comme l'ensemble des classes d'équivalence de
$k$-valuations
$\abs{k^\times}\cdot \Gamma$-colorées, bridées et bornées sur $A$.

\subsubsection{Liens avec la réduction de Temkin.}
\label{lien-gad-temkin}
Tout point de $X$ est en particulier 
une $k$-valuation
$\abs{k^\times}\cdot \Gamma$-colorée bornée et bridée
sur $A$, et peut donc être vu comme appartenant à $X\gad$ ; 
nous considèrerons désormais par ce biais
$X$ comme un sous-ensemble
de $X\gad$. Par ailleurs 
si $x$ est point de $X\gad$, il lui correspond un couple
$(x^\#,x^\flat)$ où 
$x^\#$ est une $k$-valuation $\Gamma$-colorée réelle sur $A$
et $x^\flat$ une $\widetilde k^\Gamma$-valuation sur $\hrt x^\Gamma$, et comme le passage de 
$x$ à $x^\#$ préserve les inégalités larges, $x^\#$ est bornée : 
c'est donc un point de $X$. L'application $x\mapsto x^\#$ est une rétraction de
l'inclusion $X$ dans $X\gad$, puisqu'un point $\xi$ de $X\subset X\gad$ 
correspond au couple $(\xi, \theta)$
où $\theta$ est la valuation triviale sur $\hrt \xi^\Gamma$. 

Soit $\xi \in X$, soit $\omega$ une valuation sur $\hrt \xi^\Gamma$ et soit 
$x$ la $k$-valuation
$\abs{k^\times}\cdot \Gamma$-colorée bridée sur $A$ correspondant
au couple $(\xi,\omega)$. La valuation $x$ appartient à $X\gad$ si et seulement 
si $\abs{a(x)}\leq \|a\|_{\infty}$ pour tout élément $a\in A$ dont la semi-norme
spectrale appartient à $\Gamma$. Mais cela revient à demander
que $\abs{\alpha(\omega)}\leq 1$ pour tout $\alpha$ appartenant à l'image
de $\widetilde A$ dans $\hrt \xi^\Gamma$, c'est-à-dire encore
que $\omega\in\widetilde{(X,\xi)}^\Gamma$. La fibre en $\xi$ de l'application 
$X\gad\to X$
est donc ensemblistement la réduction de Temkin $\Gamma$-graduée
du germe $(X,\xi)$.

\subsubsection{Fonctorialité.}
Soit $Y=\mathscr M(B)$ un espace $k$-affinoïde $\Gamma$-strict
et soit $f\colon Y\to X$ un morphisme. Soit
$y$ un point de $Y\gad$. 
L'application $f^*\colon A\to B$
contracte les semi-normes spectrales. Il s'ensuit que
la $k$-valuation $\abs{k^\times}\cdot \Gamma$-colorée et bridée
$a\mapsto \abs{f^*(a)(y)}$ sur $A$
est bornée, et 
appartient
dès lors à $X\gad$. Ainsi $f$ induit-il une application 
$Y\gad\to X\gad$, qu'on notera $f\gad$ ou, le plus souvent,
simplement $f$ s'il n'y a pas de risque de confusion. 

\subsubsection{Domaines affinoïdes.}\label{domaines-aff}
Soit $\domaff X$ l'ensemble
des domaines affinoïdes
$\Gamma$-stricts de $X$ ; c'est une partie
de l'ensemble $\domc X$ de tous les domaines
analytiques compacts et $\Gamma$-stricts de $X$. 
En vertu de \ref{lien-gad-temkin},
pour tout
$V\in \domaff X$
la flèche naturelle de
$V\gad$
vers $X\gad$ induite par le morphisme
$V\hookrightarrow X$ 
identifie $V\gad$ au sous-ensemble
de $X\gad$ formé des points
$x$ tels que $x^\#\in V$ et $x^\flat\in\widetilde{(V,x^\#)}^\Gamma$.
Il en résulte que la
flèche $\domaff X\to \mathscr P(X\gad), 
V\mapsto V\gad$ commute aux unions et
intersections finies, puis que cette flèche s'étend
d'une unique manière en une flèche
$V\mapsto V\gad$ de $\domc X$ vers $\mathscr P(X\gad)$
commutant aux unions et intersections finies. 
Pour tout $V\in \domc X$, le sous-ensemble
$V\gad$ de $X\gad$ est formé des 
des points
$x$ tels que $x^\#\in V$ et $x^\flat\in\widetilde{(V,x^\#)}^\Gamma$. 

Soit
$x\in X\gad$ et soit $V\in \domaff X$
tel que $x\in V\gad$.
Chacune des notations $x^\#, x^\flat, \hr x$ et $\hr {x^\#}$
désigne un objet qui ne dépend pas du fait que $x$
soit vu comme point de $V\gad$ ou de $X\gad$, et nous
pourrons donc les employer sans aucun risque de confusion
(notons par contre que la notation $\kappa(x)$
serait ambiguë ici, le morphisme 
$A\to \mathscr O_X(V)$ n'ayant aucune
raison de préserver les corps résiduels \textit{schématiques}, 
et nous ne l'utiliserons pas). 

\subsubsection{Un exemple explicite.}
\label{ex-gamma-rat}
Soit $V$ un domaine affinoïde
$\Gamma$-rationnel de $X$. 
Choisissons-en une description 
par une conjonction d'inégalités 
\[\abs{f_1}\leq \lambda_1 \abs g\;\text{et}\ldots\;\text{et}\;\abs{f_n}\leq \lambda_n \abs g\] où $g$ et les $f_i$ sont des
fonctions analytiques sur $X$ sans zéro commun et où les $\lambda_i$ sont des éléments de $\Gamma$. 

Soit $x$ un point de $X\gad$. Il appartient à $V\gad$ si et seulement si $x^\# \in V$
et $x^\flat\in\widetilde{(V,x^\#)}^\Gamma$. La première condition revient à demander que 
$\abs{f_i(x^\#)}\leq \lambda_i \abs{g(x^\#)}$ pour tout $i$ (ce qui entraîne l'inversibilité de $g(x^\#)$, 
les $f_i$ et $g$ étant sans zéro commun), et la seconde revient à demander que
$\abs{(f_i/g)(x^\flat)}
\leq 1$ pour tout $i$ tel que $\abs{f_i(x^\#)}=\lambda_i\abs {g(x^\#)}$, c'est-à-dire encore
que $\abs{(f_i/g)(x)}
\leq \gamma$ pour tout $i$ tel que $\abs{f_i(x^\#)}=\lambda_i\abs {g(x^\#)}$. 
Il en résulte que $x\in V\gad$ si et seulement si $\abs{f_i(x)}\leq \lambda_i \abs{g(x)}$ pour tout $i$ ;
autrement dit, on peut décrire $V\gad$ comme sous-ensemble de $X\gad$ par la même conjonction d'inégalités 
que $V$ comme sous-ensemble de $X$.

\subsection{Identification de $X\gad$ à $\widetilde X$}
On désigne toujours par $X$
un espace $k$-affinoïde $\Gamma$-strict.

\subsubsection{}\label{ugad-barre}
Rappelons 
que $\dom X$ désigne la sous-algèbre de Boole de $\mathscr P(X)$ engendrée par $\domc X$. 
L'application $V\mapsto V\gad$ de $\domc X$ dans $\mathscr P(X\gad)$
commutant aux unions et intersections finies, elle s'étend de manière unique en un
morphisme d'algèbres de Boole de
$\dom X$ vers $\mathscr P(X\gad)$ que nous allons noter
$V\mapsto \gadc V$ (cette notation est pour le moment à considérer 
comme d'un seul tenant ; elle est justifiée par la remarque \ref{rem-ugad-barre} plus bas). 
Si $V\in \domc X$ et si
$U$ désigne son complémentaire alors $\gadc V=V\gad$ 
et $\gadc U=X\gad \setminus V\gad$.

\begin{prop}\label{xgad-filtres}
La formule
\[x\mapsto \{V\in \dom X, x\in \gadc V\}\]
établit une bijection de $X\gad$
sur l'ensemble $\widetilde X$ des ultrafiltres d'éléments de $\dom X$. 
\end{prop}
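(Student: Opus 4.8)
The strategy is to show that the proposed map $x\mapsto \mathscr F_x:=\{V\in\dom X, x\in\gadc V\}$ is (a) well-defined, i.e.\ $\mathscr F_x$ really is an ultrafilter of elements of $\dom X$; (b) injective; and (c) surjective. That $\mathscr F_x$ is a filter is almost formal: stability under finite intersection follows from the fact that $V\mapsto\gadc V$ commutes with finite intersections, $\emptyset\notin\mathscr F_x$ because $\gadc\emptyset=\emptyset$, and upward closure is clear since $V\subset W$ forces $\gadc V\subset\gadc W$. To see it is an \emph{ultra}filter I use the criterion established in the filter rappels: it suffices that for every $V\in\dom X$ either $V\in\mathscr F_x$ or $X\setminus V\in\mathscr F_x$; and this is exactly the content of the displayed fact in \ref{ugad-barre} that $\gadc{(X\setminus V)}=X\gad\setminus\gadc V$ when $V\in\domc X$, extended to all of $\dom X$ by the Boolean-algebra-morphism property of $V\mapsto\gadc V$.

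For injectivity I must show that $\dom X$ separates the points of $X\gad$, i.e.\ if $x\ne y$ in $X\gad$ then some $V\in\domc X$ has $x\in V\gad$ but $y\notin V\gad$. Here I unwind what $x\ne y$ means via the pair $(x^\#,x^\flat)$ of \ref{lien-gad-temkin}. If $x^\#\ne y^\#$ in $X$, then since $X$ is affinoid and $\Gamma$-strict its points are separated by $\Gamma$-rational domains (Gerritzen--Grauert, \ref{gerr-grau}, together with \ref{ex-gamma-rat}), and such a domain $V$ separates $x$ from $y$ in $X\gad$ by the explicit description of $V\gad$ in \ref{ex-gamma-rat}. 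If instead $x^\#=y^\#=:\xi$ but $x^\flat\ne y^\flat$ as valuations on $\hrt\xi^\Gamma$ trivial on $\widetilde k^\Gamma$, then by \ref{reconstituer-valuation}/\ref{bij-raffinement-recap} there is $\alpha\in\hrt\xi^\Gamma$, say $\alpha=\widetilde{a(\xi)}$ for some $a\in\mathscr O_X(\Gamma)$-regular function with $\abs{a(\xi)}\in\Gamma$, and some $\gamma\in\Gamma$ with $\abs{\alpha(x^\flat)}\le 1<\abs{\alpha(y^\flat)}$ (after scaling); the $\Gamma$-rational domain cut out near $\xi$ by $\abs a\le\gamma$ then separates $x$ from $y$ in $X\gad$, again via \ref{ex-gamma-rat}. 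In both cases the separating domain lies in $\domc X$, so $\mathscr F_x\ne\mathscr F_y$.

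For surjectivity I start from an ultrafilter $\mathscr G$ of elements of $\dom X$ and must produce $x\in X\gad$ with $\mathscr F_x=\mathscr G$. The natural move is: the trace $\mathscr G\cap\domaff X$ is a collection of $\Gamma$-strict affinoid domains closed under finite intersection and not containing $\emptyset$; by compactness of $X$ the intersection $\bigcap_{V\in\mathscr G\cap\domaff X}V$ is a nonempty compact subset, and by the ultrafilter property together with the fact that any point of an affinoid has a fundamental system of $\Gamma$-rational (hence affinoid) neighbourhoods, this intersection is a single point $\xi\in X$; this $\xi$ will be $x^\#$. Next, inside the fibre over $\xi$, which by \ref{lien-gad-temkin} is the Temkin reduction $\widetilde{(X,\xi)}^\Gamma$, a quasi-compact open subset of the Zariski--Riemann space $\P_{\hrt\xi^\Gamma/\widetilde k^\Gamma}$, one runs the analogous argument: the ultrafilter $\mathscr G$ induces, via the descriptions in \ref{ex-gamma-rat}, an ultrafilter on the Boolean algebra of constructible subsets of $\widetilde{(X,\xi)}^\Gamma$ generated by the sets $\{\abs\alpha\le\gamma\}$, and spectral-space theory (Zariski--Riemann spaces are spectral) identifies such an ultrafilter with a point $x^\flat$ of $\widetilde{(X,\xi)}^\Gamma$. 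The pair $(\xi,x^\flat)$ yields via \ref{raffinement-gamma} a point $x\in X\gad$, and one checks $\mathscr F_x=\mathscr G$ by testing against the generating families $V\gad$ for $V$ $\Gamma$-rational, using that an ultrafilter is determined by the generators it contains.

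\textbf{Main obstacle.} The delicate point is surjectivity, and within it the passage from the ultrafilter $\mathscr G$ to the \emph{pair} $(x^\#,x^\flat)$: one must be careful that the ``constructible'' Boolean algebra on $X\gad$ really is generated by the $V\gad$ with $V\in\domaff X$ (so that $\mathscr G$ determines, and is determined by, its behaviour on $\Gamma$-rational domains), and that the reduction step to the fibre over $\xi$ is compatible — i.e.\ that after pinning down $x^\#=\xi$ the residual data lives exactly in $\widetilde{(X,\xi)}^\Gamma$ and that its Boolean algebra of quasi-compact opens is generated by the $\{\abs\alpha\le\gamma\}$ appearing in \ref{ex-gamma-rat}. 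This is essentially a matter of matching the valuative description of $X\gad$ in \ref{domaines-aff}--\ref{ex-gamma-rat} with the combinatorics of $\dom X$, and the only real work is bookkeeping; the existence of the points $\xi$ and $x^\flat$ themselves is guaranteed by compactness of $X$ and spectrality of the Zariski--Riemann space respectively.
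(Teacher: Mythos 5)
Votre démonstration est correcte dans ses grandes lignes, mais elle suit un chemin réellement différent de celui de l'article. L'article construit \emph{directement} la valuation associée à un ultrafiltre $\mathscr F$ : il définit l'ensemble $\mathscr E$ des $f$ telles que $\{\abs f\geq r\}\in\mathscr F$ pour un $r$ convenable, puis fabrique le groupe des valeurs comme quotient de $\mathscr E^2\times\Gamma$ par une relation d'équivalence lue dans $\mathscr F$ ; cette construction livre d'un seul coup l'existence \emph{et} l'unicité de l'antécédent (l'unicité venant de \ref{reconstituer-valuation} et du fait, via Gerritzen--Grauert, que les domaines $\Gamma$-rationnels engendrent $\dom X$). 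Vous, au contraire, scindez l'argument selon la décomposition $x\mapsto(x^\#,x^\flat)$ : l'injectivité par séparation des points (cas $x^\#\neq y^\#$ par un voisinage $\Gamma$-rationnel, cas $x^\flat\neq y^\flat$ par un élément de $\hrt\xi^\Gamma$ séparant les deux valuations résiduelles), et la surjectivité en épinglant d'abord $x^\#$ par compacité de $X$, puis $x^\flat$ par la dualité de Stone pour la topologie constructible de l'espace spectral $\widetilde{(X,\xi)}^\Gamma$. Ce que votre approche achète, c'est une lecture conceptuelle du point comme assemblage de ses deux composantes ; ce qu'elle coûte, c'est une dépendance plus lourde envers la théorie de Temkin (il faut que tout ouvert quasi-compact de $\widetilde{(X,\xi)}^\Gamma$ soit de la forme $\widetilde{(V,\xi)}^\Gamma$ et que la réduction reflète fidèlement les inclusions de germes) et un «bookkeeping» que vous signalez mais qui n'est pas tout à fait anodin : pour que le filtre induit sur la fibre en $\xi$ soit propre (ne contienne pas $\emptyset$), il faut d'abord établir que \emph{tout} voisinage analytique compact $\Gamma$-strict de $\xi$ appartient à $\mathscr G$ (ce qui se fait en recouvrant $X\setminus\mathring V$ par des affinoïdes évitant $\xi$ et en utilisant la propriété d'ultrafiltre sur les réunions finies), puis que $\widetilde{(V_0,\xi)}^\Gamma\subset\bigcup\widetilde{(V_i,\xi)}^\Gamma$ force une inclusion de germes $V_0\cap\Omega\subset\bigcup V_i$. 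Ces deux points sont vrais et démontrables avec les outils du texte, mais ils constituent le vrai contenu de votre étape de surjectivité et mériteraient d'être rédigés ; la preuve de l'article les contourne entièrement en ne manipulant que des inégalités entre fonctions analytiques.
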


\begin{proof}
Pour tout $x\in X\gad$,
nous noterons
$\Phi(x)$ l'ensemble
des éléments $V$ de $\dom X$
tels que $x\in \gadc V$. Il est immédiat
que $\Phi(x)$ est pour tout $x\in X\gad$ un ultrafiltre d'éléments de
$\dom X$. On a donc bien défini une application $\Phi\colon  X\gad
\to \widetilde X$, et il reste à montrer qu'elle est bijective. 

Soit $\mathscr F$ 
un ultrafiltre d'éléments de $\dom X$.
Nous allons construire un antécédent de $\mathscr F$ par $\Phi$,
puis montrer qu'il est unique.
Dans ce qui suit, le domaine 
analytique $\Gamma$-strict
de $X$ décrit par une 
certaine
combinaison booléenne entre normes
de fonctions analytiques 
sera simplement désigné par la combinaison en question
placée entre accolades. 

Soit 
$A$ l'anneau
des fonctions analytiques sur $X$. Notons $\mathscr E$ 
l'ensemble des éléments $f$ de $A$ 
vérifiant la propriété suivante :
\textit{il existe $r\in \abs{k^\times}
\cdot \Gamma$ tel que 
$\{\abs f\geq r\}$ appartienne à $\mathscr F$} ; nous dirons
alors d'un tel $r$ qu'il est \textit{adapté} à $f$.
Remarquons que
si $r$ est adapté à $f$ il en va de même de tout élément
$s\leq r$ de
$\abs {k^\times}\cdot \Gamma$ 
parce que $\{\abs f\geq s\}\supset \{\abs f\geq r\}$ et que $\mathscr F$ est un filtre. Remarquons aussi que
$\mathscr E$ est stable par produit : si $r$
est adapté à $f$ et $s$ à $g$
alors $rs$ est adapté à $fg$
puisque
\[\{\abs{fg}\geq rs\}\supset \{\abs f \geq r\}
\cap \{\abs g \geq s\}.\]
On définit sur $\mathscr E^2\times \Gamma$
une relation $\mathscr R$ par la formule
suivante : $(f,g,\lambda)\mathscr R (u,v,\mu)$
si et seulement s'il existe un élément 
$r\in \abs {k^\times}\cdot \Gamma$ adapté à la fois à $f,g,
u$ et $v$ tel que
\[\{\abs f\geq r\;\text{et}\;\abs g\geq r\;
\text{et}\;\abs u\geq r\;\text{et}\; \abs v\geq r
\;\text{et} \;\abs{f/g}\lambda=\abs{u/v}\mu\}\]
appartienne à $\mathscr F$. Notons que si c'est le cas,
alors 
\[\{\abs f\geq s\;\text{et}\;\abs g\geq s\;
\text{et}\;\abs u\geq s\;\text{et}\; \abs v\geq s
\;\text{et} \;\abs{f/g}\lambda=\abs{u/v}\mu\}\]
appartient encore à $\mathscr F$ pour tout
autre $s\in \abs {k^\times}\cdot \Gamma$ adapté à la fois à $f,g,
u$ et $v$ : cela provient du fait que 
\[\{\abs f\geq s\;\text{et}\;\abs g\geq s\;
\text{et}\;\abs u\geq s\;\text{et}\; \abs v\geq s
\;\text{et} \;\abs{f/g}\lambda=\abs{u/v}\mu\}\]
contient 
\[\{\abs f\geq r\;\text{et}\;\abs g\geq r\;
\text{et}\;\abs u\geq r\;\text{et}\; \abs v\geq r
\;\text{et} \;\abs{f/g}\lambda=\abs{u/v}\mu\}\]
si $s\leq r$, et que 
\[\{\abs f\geq s\;\text{et}\;\abs g\geq s\;
\text{et}\;\abs u\geq s\;\text{et}\; \abs v\geq s
\;\text{et} \;\abs{f/g}\lambda=\abs{u/v}\mu\}\]
est l'intersection de
\[\{\abs f\geq r\;\text{et}\;\abs g\geq r\;
\text{et}\;\abs u\geq r\;\text{et}\; \abs v\geq r
\;\text{et} \;\abs{f/g}\lambda=\abs{u/v}\mu\}\]
et de l'élément
\[\{\abs f\geq s\;\text{et}\;\abs g\geq s\;
\text{et}\;\abs u\geq s\;\text{et}\; \abs v\geq s\}\]
de $\mathscr F$ sinon. 

Il s'ensuit immédiatement que $\mathscr R$ est une relation
d'équivalence, et que la formule 
\[((f,g,\lambda),(u,v,\mu))\mapsto
(fu,gv,\lambda \mu)\] passe au quotient et fait de
$G:=\mathscr E^2\times \Gamma/\mathscr R$
un groupe abélien, dont le neutre
est $\overline{(1,1,1)}$, et dans lequel
l'inverse de $\overline{(f,g,\lambda)}$ est
$\overline{(g,f,\lambda^{-1})}$. 
Le groupe $\abs{k^\times}\cdot\Gamma$ se plonge naturellement
dans $G$ par la formule
$(r,\lambda)\mapsto \overline{(a,1,\lambda)}$ 
où $a$ est n'importe quel élément de $k^\times$ tel que
$\abs a=r$ (on vérifie aussitôt que le terme de droite ne dépend pas du choix de $a$). 
Soit $\leq$ la relation sur $\mathscr E^2\times
\Gamma$ définie par la condition que 
$(f,g,\lambda)\leq (u,v,\mu)$ s'il existe
$r\in \abs{k^\times}\cdot \Gamma$ 
adapté à $f,g, u$ et $v$ et tel que
\[\{\abs f\geq r\;\text{et}\;\abs g\geq r\;
\text{et}\;\abs u\geq r\;\text{et}\; \abs v\geq r
\;\text{et} \;\abs{f/g}\lambda\leq \abs{u/v}\mu\}\]
appartienne à $\mathscr F$ (par un raisonnement analogue à celui tenu ci-dessus, une fois
que c'est vrai pour un certain $r$, cela reste vrai pour tout
$s$ adapté à $f,g,u$ et $v$). 
Cette relation passe au quotient
par $\mathscr R$, et on vérifie qu'elle fait de $G$
un groupe abélien ordonné ; 
le plongement
$\abs{k^\times}\cdot\Gamma\hookrightarrow G$ est croissant et fait
donc de $G$ un groupe $\abs{k^\times}\cdot \Gamma$-coloré, qui est bridé par 
construction : si $f$ et $g$ appartiennent à $\mathscr E$ alors
$f$ n'est pas nilpotent et $\overline{(f,g,\lambda)}$ est majoré par
$||f||_\infty\lambda/r$ où $r$ est n'importe quel élément
de $\abs{k^\times}\cdot \Gamma$ adapté à $g$. 

L'application de $A$ dans $G\cup\{0\}$ qui envoie $f$ sur $0$ si $f\notin \mathscr E$ et sur
$\overline{(f,1,1)}$ sinon est alors une $k$-valuation $\abs{k^\times}\cdot \Gamma$-colorée bridée
et bornée sur $A$, et elle définit donc un point $x$ de $X\gad$. Par construction, si $V$ est un domaine
affinoïde $\Gamma$-rationnel de $X$ alors $x\in V\gad$ si et seulement si $V\in \mathscr F$, et $x$ est le
seul point de $X\gad$ à posséder cette propriété. Comme $\mathscr F$ est
un ultrafiltre et comme les éléments de $\domc X$
sont exactement les unions finies de domaines affinoïdes $\Gamma$-rationnels, il s'ensuit que pour tout 
$V\in \domc X$ le point $x$ appartient à $V\gad$ si et seulement si $V\in \mathscr F$, et que
$x$ est le seul
point de $X\gad$ à posséder cette propriété ; il s'ensuit pour des raisons formelles que
pour tout 
$V\in \dom X$ le point $x$ appartient à $\gadc V$ si et seulement si $V\in \mathscr F$, et que
$x$ est le seul
point de $X\gad$ à posséder cette propriété ; autrement dit, $\Phi^{-1}(\mathscr F)=\{x\}$. 

\end{proof}

\subsubsection{Description de $X\gad$ comme espace topologique}
La bijection construite ci-dessus entre $X\gad$ 
et $\widetilde X$ permet de munir $X\gad$ d'une topologie en faisant un 
espace compact totalement discontinu, 
dont les ouverts fermés sont 
les éléments de l'algèbre de Boole engendrée par les $V\gad$ pour $V\in \domc X$.

Si $\Gamma=\{1\}$ (resp. $\Gamma=\mathbb R_{>0}$) l'espace topologique $X\gad$ s'identifie à l'espace adique
(resp. l'espace de valuations réifiées) associées à $X$ par Huber \cite{huber1993} (resp. Kedlaya, \cite{kedlaya2015})
\textit{muni de sa topologie constructible}. (Pour alléger l'écriture nous écrirons
$X\ad$ au lieu de $X^{\{1\}\text{-}\mathrm{ad}}$). 

\subsection{Définition de $X\gad$ dans le cas général}
On désigne maintenant par $X$ un espace $k$-analytique $\Gamma$-strict
quelconque.

\subsubsection{}
On définit l'espace
topologique $X\gad$ comme la colimite
topologique de $V\gad$
pour $V$ parcourant l'ensemble des domaines affinoïdes 
$\Gamma$-stricts de $X$. 

Remarquons qu'il n'y a pas de conflits de notation : 
si $X$ est affinoïde et si $V$ est un domaine
analytique compact et $\Gamma$-strict de $X$, 
les espaces $X\gad$ et $V\gad$ au sens que nous venons
de définir sont les mêmes que ceux 
déjà introduits plus haut. 

En vertu de la proposition \ref{xgad-filtres}
et par la définition même de la topologie sur $V\gad$ lorsque
$V$ est un domaine affinoïde $\Gamma$-strict de $X$, l'espace $X\gad$
s'identifie à la colimite topologique des $\widetilde V$ lorsque $V$ parcourt 
l'ensemble des domaines affinoïdes $\Gamma$-stricts de $X$, qui n'est autre
que la colimite topologique des $\widetilde V$ lorsque $V$ parcourt $\domc X$
(chaque élément de $\domc X$ étant une union finie d domaines affinoïdes
$\Gamma$-stricts). Autrement dit, $X\gad$ s'identifie naturellement
à $\breve X$.

\subsubsection{}
Il résulte de \ref{lien-gad-temkin}
que
$X\gad$ s'identifie naturellement comme
ensemble à 
$\{(\xi,\omega)\}_{\xi \in X,\omega
\in \widetilde{(X,\xi)}^\Gamma}$ ; si
$x\in X\gad$ le couple $(\xi,\omega)$ correspondant
sera noté $(x^\#,x^\flat)$ comme dans le cas
affinoïde.

\subsubsection{}
Si $\Delta$ est un sous-groupe
de $\rpos$ contenant $\Gamma$ l'espace $X$ est également $\Delta$-strict, 
et l'on dispose d'une application continue propre et surjective $X^{\Delta
\text{-}\mathrm{ad}}\to X\gad$ (\ref{dom-x1-x2}). 
À titre d'exemple, décrivons brièvement
les fibres de $X^{\rpos
\text{-}\mathrm{ad}}\to X\ad$ lorsque $X$ est une $k$-courbe
stricte. Ce sont alors toutes des singletons, sauf 
éventuellement au-dessus des points
de type $3$ de $X$ : la fibre en un tel $x$ contient en effet $x$ lui-même 
mais également un point
par branche issue de $x$ ; et il y a deux branches issues de $x$ si ce dernier
est intérieur, il y en a une si $x$ appartient à $\partial X$ sans être un point isolé, 
et aucune si $x$ est isolé.

\begin{rema}\label{rem-ugad-barre}
Supposons $X$ affinoïde, et soit $V$ un domaine affinoïde et $\Gamma$-strict de $X$. 
Soit $U$ le complémentaire de $V$ dans $X$. L'ensemble $\overline{U\gad}$ défini en \ref{ugad-barre}
est alors précisément l'adhérence de $U\gad$ dans $X\gad$. En effet comme $\overline{U\gad}$ est compact, il suffit de vérifier
que $U\gad$ est dense dans $\overline{U\gad}$. Soit donc $x\in \overline{U\gad}$ et soit $W$ un élément de $\dom X$
tel que $x\in \overline{W\gad}$. On a alors $x\in \overline{U\gad}\cap \overline{W\gad}$. La partie $W\cap U$ est donc un élément non 
vide de $\dom X$, qui contient par conséquent un domaine affinoïde $\Gamma$-strict compact non vide $Y$. Mais alors 
$Y\gad$ est une partie non vide de $U\gad$ qui est contenue dans $\overline{W\gad}$, ce qui permet de conclure. 
\end{rema}

\section{Espaces friables en géométrie linéaire par morceaux}\label{friable-polytope}
Soit $\Delta$ un sous-groupe divisible non trivial de $\rpos$ et soit $c$ le couple
$(\Q,\Delta)$ (en pratique nous appliquerons ceci avec $\Delta=(\abs{k^\times}\cdot \Gamma)^\Q$). 
Si $X$ est un espace $c$-linéaire
par morceaux, nous nous proposons de donner une description «valuative» de $\breve X$, dans un sens qui sera précisé, 
analogue à celle exhibée dans le cas des espaces analytiques. 

\subsection{L'espace des types}
Nous allons tout d'abord présenter 
(dans le contexte $c$-linéaire par morceaux) une construction classique en théorie des modèles, 
l'\textit{espace des types}, qui redonnera l'espace friable dans le cas des $c$-polytopes
compacts.

\subsubsection{}
Soit $n$ un entier et soit 
$\mathsf F$ l'ensemble des formules du premier ordre
dans le langage des groupes abéliens ordonnés à paramètres
dans $\Delta$, en $n$ variables libres $x_1,\ldots, x_n$. 
Concrètement, une formule $\Phi$ appartenant à $\mathsf F$
est construite
à partir des quantificateurs et connecteurs logiques standard ainsi que
des symboles
$1,\cdot, (\cdot)^{-1}$ et $\leq, <,\geq, > $ relatifs à la structure de groupe abélien ordonné, 
et met en jeu d'une part un certain nombre de variables quantifiées, 
d'autre part certaines variables non quantifiées parmi $x_1,\ldots, x_n$, ainsi
que des paramètres appartenant à $\Delta$. Par exemple si $n=3$
et si $r$ est un élément fixé de $\Delta$,
\[\forall x\,\exists y \exists z\, (x_1x_2^3\leq xyz)\;\text{ou}\;(yz^{-3}>r\;\text{et}\;x_3y^{-1}=z^2x)\]
est une formule appartenant à $\mathsf F$. 

Si $H$ est un groupe abélien divisible ordonné $\Delta$-coloré et si $(h_1,\ldots, h_n)$ 
sont des éléments de $H$ on obtient pour toute $\Phi\in \mathsf F$, 
en substituant $h_i$ à $x_i$ quel que soit $i$, une formule
$\Phi(h_1,\ldots, h_n)$ sans variables libres et à paramètres dans $H$, dont la valeur de vérité dans le
groupe abélien ordonné $H$ est bien définie. 

\subsubsection{}\label{el-quant}
La théorie des groupes abéliens divisibles ordonnés non triviaux 
admet l'élimination des quantificateurs dans le langage des groupes abéliens ordonnés. 
Il en résulte que pour toute $\Phi\in \mathsf F$ il existe 
$\Psi\in \mathsf F$ \textit{sans quantificateurs}, telle que
pour tout
groupe abélien ordonné divisible $\Delta$-coloré $H$ et tout $(h_1,\ldots, h_n)\in H^n$
on ait l'équivalence 
$\Phi(h_1,\ldots, h_n)\iff \Psi(h_1,\ldots, h_n)$ dans le groupe $H$.

Il s'ensuit que si
$H'$ est un groupe abélien ordonné $\Delta$-coloré divisible contenant $H$ alors 
pour toute formule
$\Phi\in \mathsf F$, l'énoncé $\Phi(h_1,\ldots, h_n)$
vaut dans $H$ si et seulement s'il  vaut dans $H'$ (car c'est évident si $\Phi$ est sans quantificateurs). 

\subsubsection{}
On note $\mathbb G_n$ l'ensemble des «classes d'équivalence» de 
$n$-uplets $(h_1,\ldots, h_n)$ où les $h_i$
appartiennent à un groupe abélien ordonné
divisible $\Delta$-coloré $H$
(qui n'est pas fixé) pour la relation d'équivalence définie par la 
condition suivante : 
\[(h_1,\ldots, h_n)\sim (h'_1,\ldots, h'_n)\] si et seulement si pour toute 
formule $\Phi\in \mathsf F$ on a 
\[\Phi(h_1,\ldots, h_n)\iff \Phi(h'_1,\ldots, h'_n),\] et il suffit 
de le vérifier lorsque $\Phi$ est de la forme $\phi \leq r$ où
$\phi$ est une fonction $c$-affine à exposants entiers et $r$ un élément de $\Delta$. 

\begin{rema}
Bien que sa définition mette en jeu tous les groupes abéliens divisibles ordonnés
$\Delta$-colorés, $\mathbb G_n$ est bien un ensemble : on peut en effet l'identifier par construction 
à un sous-ensemble de $\mathsf F^{\{0,1\}}$. 

Si $\Phi$ est un élément de $\mathsf F$, sa validité en un élément donné $g$ de $\mathbb G_n$ est bien définie : elle se teste
sur n'importe quel $n$-uplet représentant $g$. 
\end{rema}

\begin{rema}\label{rem-f-fq}
Pour définir $\mathbb G_m$ on aurait pu tout aussi bien partir de l'ensemble $\mathsf F^\Q$ des formules en $n$
variables libres $x_1,\ldots, x_n$ à paramètres dans $\Delta$  dans le langage des groupes abéliens \textit{divisibles}
ordonnés, qui est le langage des groupes abéliens ordonnés auquel on adjoint les symboles d'élévation à une puissance
rationnelle. Cela provient du fait que pour toute $\Phi\in \mathsf F^\Q$ il existe $\Psi\in \mathsf F$ telle que pour
tout groupe abélien divisible ordonné $\Delta$-coloré $H$ et tout $(h_1,\ldots, h_n)\in H^n$
on ait $\Phi(h_1,\ldots, h_n)\iff \Psi(h_1,\ldots, h_n)$. 

\end{rema}

\subsubsection{}
Soit $g\in \mathbb G_n$. Choisissons un groupe
abélien ordonné
divisible $\Delta$-coloré $H$ et un
représentant $(h_1,\ldots, h_n)\in H^n$ 
de $g$. Le groupe $H$ n'est pas canonique, mais par
définition de la relation d'équivalence utilisée pour
définir $\mathbb G_n$, 
le sous-groupe de $H$ engendré par $\Delta$ et les $h_i^\Q$ est bien défini 
à unique isomorphisme près comme groupe abélien ordonné
divisible $\Delta$-coloré muni d'un $n$-uplet
$(h_1,\ldots, h_n)$ ; nous noterons ce groupe $\Theta(g)$ et $(x_1(g),\ldots, x_n(g))$
le $n$-uplet $(h_1,\ldots, h_n)$. 

Si $\phi=a\prod x_i^{e_i}$ est une application $c$-affine (avec $a\in \Delta$ et les $e_i$ dans $\Q$)
nous noterons $\phi(g)$ l'élément $a\prod x_i(g)^{e_i}$ de $\Theta(g)$.

\subsubsection{}
Soit $\dom{\rposn n}$ l'ensemble des parties de $\rposn n$ définissables comme le lieu de validité
d'une formule $\Phi$ appartenant à $\mathsf F^\Q$ (ou à $\mathsf F$ si l'on préfère, 
voir la remarque \ref{rem-f-fq} ci-dessus). 
C'est aussi la sous-algèbre de Boole
de $\mathscr P(\rposn n)$ engendrée par les $c$-polytopes (cette notation est donc compatible avec celle introduite en 
\ref{notation-dx}). Si $P\in  \dom{\rposn n}$ nous noterons $\dom P$ la sous-algèbre de Boole de $\mathscr P(P)$ formée
des parties qui appartiennent à $\dom{\rposn n}$ ; lorsque $P$ est un $c$-polytope compact, cette notation est compatible
avec celle introduite en \ref{def-friable}.

Soit $P\in \dom{\rposn n}$ et soit $\Phi$ une formule appartenant à $\mathsf F^\Q$ définissant $P$. 
Si $\Psi$ est une autre formule appartenant à $\mathsf F^\Q$ et définissant $P$, la formule
\[\forall( x_1,\ldots, x_n )\,\Phi\iff \Psi\] ne comporte plus de variables libres et vaut sur $\rposn n$,  et partant sur
tout groupe abélien ordonné divisible et $\Delta$-coloré (d'après \ref{el-quant}). Le lieu de validité de $\Phi$
sur $\mathbb G_n$ ne dépend donc que de $P$ et non du choix de $\Phi$, et sera noté $\dot P$. 
Si $P=\rposn n$ alors $\dot P=\mathbb G_n$. 

On remarque que $P$ se plonge naturellement dans $\dot P$ (l'injectivité
provient du fait que si $p$ et $q$ sont deux points distincts de $P$ on peut trouver une
application $c$-affine $\psi$ telle que $\psi(p)<1$ et $\psi(q)\geq 1$).

\subsubsection{}\label{application-types}
Soit $P$ appartenant à 
$\dom{\rposn n}$ et soit $\phi$ une application $c$-définissable de $P$ vers $\rpos$, dans le langage
des groupes abéliens ordonnés et à paramètres dans $\Delta$. Cela signifie qu'il existe une famille finie $(P_i)$ d'éléments
de $\dom P$ tels que $P=\bigcup P_i$ et, pour tout $i$ une application $c$-affine $\phi_i$ de $\rposn n$ dans $\rpos$, 
telles que $\phi(x)=\phi_i(x)$ pour tout $i$ et tout $x\in P_i$. 

Soit $g$ un point de $\dot P$. Il existe alors $i$ tel que $g\in \dot P_i$, et l'élément 
$\phi_i(g)$ de $\Theta(g)$ ne dépend pas du choix de $i$ ; on le note $\phi(g)$.

\subsection{Types et ultrafiltres, valuations $c$-polytopales}
Nous nous proposons maintenant d'interpréter l'ensemble des types $\dot P$ comme 
un ensemble d'ultrafiltres, qui coïncidera avec l'espace friable associé à $P$ lorsque ce dernier est compact ; 
nous en donnerons également dans ce cas une description «valuative».

\subsubsection{}
Si $P$ est un élément de
$\dom{\rposn n}$
nous noterons 
$\widetilde P$ l'ensemble des ultrafiltres d'éléments
de $\dom P$. Si $P$ est un $c$-polytope compact, la notation $\widetilde P$ 
est compatible avec celle introduite
en \ref{def-friable}
dans le cadre des espaces domaniaux compacts généraux
(le $c$-polytope $P$ étant vu comme un espace domanial \textit{via} sa structure linéaire par morceaux).

\begin{prop}\label{filtres-polytopaux}
Soit $P$ appartenant à $\dom{\rposn n}$. La formule
\[g\mapsto \{Q\in \dom P, g\in \dot Q\}\]
établit une bijection entre $\dot P$ et $\widetilde P$.
\end{prop}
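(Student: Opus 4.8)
The plan is to follow closely the scheme of the proof of Proposition~\ref{xgad-filtres}, with $c$-affine monomials playing the role of analytic functions. Write $\Phi(g)=\{Q\in\dom P,\ g\in\dot Q\}$. If $P=\emptyset$ then $\dot P=\emptyset$ and there is no filter of $\dom P=\{\emptyset\}$, so the statement is vacuous; assume henceforth $P\neq\emptyset$, so that $P$ belongs to every filter of $\dom P$. That $\Phi$ takes values in $\widetilde P$ follows from the observation that $Q\mapsto\dot Q$ is a morphism of Boolean algebras from $\dom{\rposn n}$ to $\mathscr P(\mathbb G_n)$: recall from~\ref{el-quant} that $\dot Q$ is the validity locus on $\mathbb G_n$ of \emph{any} formula defining $Q$, and a Boolean combination of definable sets is defined by the corresponding Boolean combination of formulas; hence $\Phi(g)$ is a filter, it contains $P\neq\emptyset$ so is proper, and for each $Q$ it contains exactly one of $Q$ and $P\setminus Q$, so it is an ultrafilter. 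For injectivity, if $\Phi(g)=\Phi(g')$ it suffices, by the very definition of $\mathbb G_n$, to check $\phi(g)\leq r\iff\phi(g')\leq r$ for each $c$-affine $\phi$ with integer exponents and each $r\in\Delta$; but the trace on $P$ of the half-space $\{\phi\leq r\}$ is an element $Q_{\phi,r}$ of $\dom P$ with $g\in\dot{Q_{\phi,r}}\iff\phi(g)\leq r$, and $Q_{\phi,r}\in\Phi(g)\iff Q_{\phi,r}\in\Phi(g')$.

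The substantive part is surjectivity, which I would establish exactly as the analogous point in~\ref{xgad-filtres}: from the ultrafilter $\mathscr F$ one manufactures a divisible ordered $\Delta$-colored group together with an $n$-tuple of elements. Let $M$ be the multiplicative group of $c$-affine monomials $a\prod x_i^{e_i}$ (with $a\in\Delta$, $e_i\in\Q$), and declare $\mu\leq\nu$ when $\{x\in P,\ \mu(x)\leq\nu(x)\}\in\mathscr F$; this set is the trace on $P$ of a half-space, so lies in $\dom P$. The filter axioms give transitivity, the ultrafilter axioms give totality and antisymmetry on the quotient by the relation $\mu\sim\nu\iff(\mu\leq\nu\leq\mu)$, and the pointwise identity $\{\mu\leq\nu\}=\{\mu\rho\leq\nu\rho\}$ in $\rposn n$ gives compatibility with multiplication; since $M$ is divisible, so is the quotient, whence $H:=M/{\sim}$ is a divisible ordered abelian group, made $\Delta$-colored and nontrivial via the embedding of $\Delta$ sending each $a$ to the class of the constant monomial $a$ (order-preserving and injective because a constant inequality holds either on all of $P\in\mathscr F$ or on none of it). Let $h_i\in H$ be the class of the monomial $x_i$, and let $g\in\mathbb G_n$ be the class of $(h_1,\dots,h_n)$. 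The key claim is that $g\in\dot Q\iff Q\cap P\in\mathscr F$ for every $Q\in\dom{\rposn n}$: for a half-space $Q=\{\mu\leq 1\}$ this is precisely the definition of the order of $H$ read off at the pair $(\mu,1)$, and both sides are compatible with complements, finite unions and finite intersections in $Q$ (the right-hand side by the ultrafilter properties of $\mathscr F$, in particular the fact that a finite union lying in an ultrafilter has a member in it); since $\dom{\rposn n}$ is the Boolean algebra generated by half-spaces, the claim follows. Taking $Q=P$ gives $g\in\dot P$, and for $Q\in\dom P$ it reads $g\in\dot Q\iff Q\in\mathscr F$, that is $\Phi(g)=\mathscr F$; uniqueness of the antecedent is the injectivity already proved.

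The main obstacle is thus entirely in the surjectivity step, and it is of two kinds: the routine but somewhat lengthy bookkeeping showing that an ultrafilter of $\dom P$ genuinely defines a divisible ordered $\Delta$-colored group, and the careful coupling of the well-definedness of $\dot Q$ (i.e.\ quantifier elimination,~\ref{el-quant}) with the Boolean-algebra structure of $\dom{\rposn n}$ needed to pass from the atomic case of half-spaces --- where everything holds by construction --- to arbitrary definable subsets of $P$. I would mention, but not follow (to keep the argument self-contained and parallel to~\ref{xgad-filtres}), the shorter model-theoretic route: $\mathscr F$ determines a complete type over $\Delta$ in $n$ variables, finitely satisfiable in $\rposn n$ hence consistent, and complete because $\mathscr F$ is an ultrafilter, and one realizes it in a divisible ordered $\Delta$-colored group by compactness.
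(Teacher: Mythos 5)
Your proof is correct and follows essentially the same route as the paper's: the inverse is constructed by ordering the group of $c$-affine monomials via membership of $\{\mu\leq\nu\}\cap P$ in $\mathscr F$ and quotienting (the paper phrases this as $A/B$ with $B$ the monomials equal to $1$ on a member of $\mathscr F$, which is the same group), and the identity $\Phi(g)=\mathscr F$ is checked by reducing to half-spaces and propagating through Boolean combinations, exactly as in the paper's disjunction-of-conjunctions argument. The only cosmetic differences are that you prove injectivity directly via the defining equivalence on $\mathbb G_n$ where the paper invokes uniqueness from the construction, and that you make the Boolean-algebra bookkeeping slightly more explicit; neither changes the substance.
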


\begin{proof}
Pour tout $g\in G$, notons 
$\Phi(g)$ l'ensemble des éléments
$Q$ de $\dom P$ tels
que $g\in \dot Q$. 
Il est immédiat que $\Phi(g)$ est un ultrafiltre d'éléments
de $\dom P$. 
Réciproquement soit $\mathscr F$ un tel ultrafiltre. Nous allons montrer qu'il existe 
un unique $g\in \dot P$ tel que $\mathscr F=\Phi(g)$.

Soit $A$ le groupe des applications $c$-affines sur $\rposn n$, et soit $B$ 
l'ensemble des éléments 
$\phi$ de $A$ tels que $\{p\in P, \phi(p)=1\}$ appartienne à $\mathscr F$. C'est un sous-groupe de $A$
(pour la stabilité par produit noter que  $\{p\in P, \phi(p)\psi(p)=1\}$ contient l'intersection de $\{p\in P, \phi(p)=1\}$
et $\{p\in P, \psi(p)=1\}$). Soit $H$ le quotient $A/B$. C'est un groupe abélien divisible dans lequel $\Delta$ se plonge naturellement. 
Définissons sur $A$ la relation $\leq $ en décrétant que  $\phi\leq \psi$ si $\{p\in P, \phi(p)\leq \psi(p)\}$ appartient à 
$\mathscr F$. Cette relation passe au quotient par $B$ et fait ainsi de $H$ un groupe abélien divisible $\Delta$-coloré ordonné
L'image $g$ de $(x_1,\ldots,x_n)$ dans $H^n$
est alors un élément de $\mathbb G_n$, qui appartient à $\dot P$ : en effet si $P$ est défini par la formule
\[\bigvee_i \bigwedge_j \phi_{ij}\Join_{ij} 1\] où les $\phi_{ij}$ sont des applications
$c$-affines et
les $\Join_{ij}$ des symboles appartenant
à $\{\leq, >\}$, le fait que $\mathscr F$ soit 
un ultrafiltre assure
qu'il existe $i$ tel que l'élément de $\dom P$ défini par la formule $\bigwedge_j \phi_{ij}\Join_{ij}1$ appartienne à
$\mathscr F$, et cette formule est alors satisfaite en $g$, qui appartient dès lors à $\dot P$.

Vérifions que $\Phi(g)=\mathscr F$. Soit $Q\in \dom P$. Décrivons $Q$ par une formule
\[\bigvee_a \bigwedge_b \psi_{ab}\Join^{ab} 1\] 
où les $\psi_{ab}$ sont des applications
$c$-affines et
les $\Join^{ab}$ des symboles appartenant
à $\{\leq, >\}$. Le point $g$ appartient à $\dot Q$ si et seulement il existe $a$
tel que  $\bigwedge_b \psi_{ab}(g)\Join^{ab} 1$. Mais par définition du groupe ordonné
$\Delta$-coloré $H$, cette dernière condition est satisfaite si et seulement si le sous-ensemble 
de $P$ défini par la conjonction $\bigwedge_b \psi_{ab}\Join^{ab} 1$ appartient à $\mathscr F$. 
Puisque $\mathscr F$ est un ultrafiltre, il s'ensuit que $g\in \dot Q$ si et seulement si 
$Q\in \mathscr F$, ce qui montre que $\Phi(g)=\mathscr F$.  Et il découle de la construction même de
$g$ que c'est le seul antécédent possible de $\mathscr F$ par $\Phi$. 
\end{proof}

\begin{rema}\label{breve-polytope}
Soit $P$ un élément de $\dom{\rposn n}$, que l'on suppose localement
fermé dans $\rposn n$. C'est alors un espace $c$-linéaire par morceaux, 
qui possède donc un espace friable $\breve P$. Celui-ci est par définition 
la colimite des espaces $\breve Q$ où $Q$
parcourt l'ensemble des $c$-polytopes compacts
contenus dans $P$. Pour un tel $Q$ l'espace $\breve Q$ coïncide par définition 
avec $\widetilde Q$, qui s'identifie à $\dot Q$ par la proposition \ref{filtres-polytopaux}
ci-dessus. Il en résulte que $\breve P$ s'identifie lui-même au sous-ensemble 
de $\dot P$ égal à la réunion des $\dot Q$ où $Q$ parcourt l'ensemble
des $c$-polytopes compacts de $\rposn n$ contenus dans $P$. 

Supposons maintenant que $P=\rposn n$, auquel cas $\dot P=\mathbb G_n$. 
On peut alors identifier $\breve P$ au sous-ensemble $\bigcup \dot Q$
de $\mathbb G_n$ où $Q$ parcourt l'ensemble des $c$-polytopes
compacts de $\rposn n$. Par cofinalité, on peut en fait se contenter de faire parcourir
à $Q$ l'ensemble des pavés $[R^{-1},R]^n$ pour $R\in \Delta\cap(1,+\infty)$. 
En conséquence, $\breve P$ s'identifie à l'ensemble des $g\in \mathbb G_n$ tel qu'il existe $R>1$ dans $\Delta$
pour lequel on a $R^{-1}\leq x_i(g)\leq R$ quel que soit $i$, 
c'est-à-dire encore à l'ensemble des 
$g\in \mathbb G_n$ tel que le groupe abélien ordonné $\Delta$-coloré $\Theta(g)$ soit bridé. 
\end{rema}

\subsubsection{}\label{valuation-polytopale-plongee}
Soit $P$ un $c$-polytope compact de $\rposn n$. Une \textit{valuation $c$-polytopale} $g$
sur $P$ est une application $\Lambda_c(P)\to H$, où $H$ est un groupe abélien divisible ordonné $\Delta$-coloré, que l'on note
$\psi \mapsto \psi(g)$ et qui possède la propriété suivante : \textit{si $\psi_1,\ldots, \psi_m$ sont des éléments de $\Lambda_c(P)$, 
toute formule du langage des groupes ordonnés $G$-colorés satisfaite par $(\psi_1(p),\ldots, \psi_m(p))$ pour
tout $p\in P$ est encore satisfaite par $(\psi_1(g),\ldots, \psi_m(g))$.} On dit que deux telles valuations
$c$-polytopales $g$ et $g'$, 
à valeurs respectivement dans $H$ et $H'$,  sont équivalentes s'il existe un groupe abélien divisible $\Delta$-coloré $H''$, deux 
morphismes injectifs croissants de groupe $\Delta$-colorés $H''\hookrightarrow H$ et $H''\hookrightarrow H'$, et une factorisation 
de $g$ et $g'$ par une même valuation $c$-polytopale à valeurs dans $H''$. 

Si $g$ est une valuation $c$-polytopale sur $P$ à valeurs dans un certain  groupe abélien divisible ordonné $\Delta$-coloré, 
l'ensemble des $\psi(g)$ pour $\psi \in\Lambda_c(P)$ est un sous-groupe divisible de $H$ contenant $\Delta$, qui est simplement
le sous-groupe engendré par $\Delta$ et les $x_i(g)^\Q$. Il ne dépend à unique isomorphisme près que de la classe d'équivalence de $g$, 
et comme nous ne considèrerons les valuations $c$-polytopales qu'à équivalence près, nous nous permettrons de le noter $\Theta(g)$. 

Le paragraphe \ref{application-types} permet d'associer à tout élément $g$ de $\dot P$
une application $\psi \mapsto \psi(g)$ définie sur une certain ensemble d'applications contenant $\Lambda_c(P)$, et sa restriction
à $\Lambda_c(P)$ est une valuation $c$-polytopale. 
Réciproquement si $g$ est une valuation polytopale sur $P$, l'élément $(x_i(g),\ldots, x_n(g))$ de $\Theta(g)$ définit un point 
de $\dot P$, et ces deux constructions mettent en bijection $\dot P$ avec l'ensemble des classes d'équivalence
de valuations $c$-polytopales
sur $P$ ; nous identifierons désormais librement ces deux ensembles, et les notations $\Theta(g)$ de ce paragraphe et du
paragraphe \ref{application-types} ci-dessus sont bien compatibles modulo cette identification. L'avantage de la notion de valuation
polytopale est qu'elle ne fait référence qu'à la structure $c$-linéaire par morceaux de $P$, et pas à sa description 
comme partie de $\rposn n$ (elle ne fait jouer aucun rôle particulier aux fonctions coordonnées $x_i$) ; elle garde un sens
sur un espace $c$-linéaire par morceaux quelconque. 

\subsubsection{}
Soit $P$ un $c$-polytope compact. 
En combinant \ref{valuation-polytopale-plongee}
et la proposition \ref{filtres-polytopaux} on obtient une bijection entre 
l'ensemble des valuations $c$-polytopales sur $P$ et $\widetilde P$, 
qui lui-même est égal à $\breve P$ puisque $P$ est un espace $c$-linéaire par morceaux compact.

\begin{rema}\label{val-polytopale-generale}
Soit $P$ un espace $c$-linéaire par morceaux. 
Si $P$ est compact et  \textit{plongeable} au sens de \ref{plongeable}, 
il résulte de ce qui précède que $\breve P$
s'identifie à l'ensemble des valuations $c$-polytopales sur $P$, dont la définition ne fait
référence qu'à la structure abstraite d'espace $c$-linéaire par morceaux de $P$
et ne nécessite pas de choix de plongement. 

Ne faisons plus d'hypothèse sur $P$. Soit $g$ un point de $\breve P$. 
Par définition, $g$ appartient à $\breve Q$ pour un certain sous-espace
$c$-linéaire par morceaux compact $Q$ de $P$, que l'on peut supposer
plongeable. On peut alors interpréter $g$ comme une 
valuation $c$-polytopale sur $Q$. Il résulte des constructions (et d'une réduction
immédiate au cas plongé) que 
$\Theta(g)$ ne dépend pas du choix de $Q$. Si $\phi$ est une application $c$-linéaire par morceaux
définie sur une partie $c$-linéaire par morceaux $R$ de $P$ telle
que $g\in \breve R$ on peut choisir $Q$ comme ci-dessus en demandant de plus
que $Q\subset R$, ce qui permet de restreindre $\phi$ à $Q$ et partant de donner un 
sens à l'élément $\phi(g)$ de $\Theta(g)$ ; cet élément ne dépend pas du choix de $Q$
(là encore, on le voit par réduction au cas plongé). On a donc défini une valuation $c$-polytopale
$\phi \mapsto \phi(g)$ sur $\mathrm{colim}\,\Lambda_c(R)$ , la colimite étant prise sur l'ensemble 
des parties $c$-linéaires par morceaux $R$ de $P$ telles que $g\in \breve R$. 
\end{rema}

\subsection{Squelettes adiques}

On fixe un espace $k$-analytique $\Gamma$-strict $X$. On désigne à nouveau par
$c$ le couple $(\Q, (\Gamma\cdot \abs{k^\times})^\Q$. 

\subsubsection{}
Soit $\Sigma$ un $c$-squelette de $X$. Par définition, sa structure $c$-linéaire par morceaux
est simplement la structure domaniale induite par la structure domaniale $\Gamma$-strictement
analytique de $X$. Nous noterons $\Sigma\gad$ l'espace friable associé à l'espace
domanial $\Sigma$. L'inclusion $\Sigma\hookrightarrow X$ permet d'identifier
$\Sigma\gad$ à une partie localement fermée de $X\gad$, fermée si 
$\Sigma$ est fermée ; si $x\in \Sigma\gad$ alors
$x^\#\in \Sigma$, et $\Sigma\gad \cap X=\Sigma$ (voir \ref{breve-locferm}).

\subsubsection{}
Soit $x$ un point de $\Sigma\gad$. Il résulte de notre étude générale de l'espace friable associé à un espace
$c$-linéaire par morceaux (voir la sous-section
\ref{friable-polytope} et plus particulièrement \ref{valuation-polytopale-plongee}
et la remarque \ref{val-polytopale-generale}) qu'au point $x$ est associé
un groupe abélien divisible ordonné, $(\abs{k^\times}\cdot \Gamma)^\Q$-coloré et bridé $\theta(x)$ et une
une valuation $c$-polytopale $\lambda\mapsto \lambda(x)$, à valeurs dans $\theta(x)$ et
dont le domaine de définition 
est $\mathrm{colim}\,\Lambda_c(Q)$, la colimite étant prise sur l'ensemble (filtrant)
des parties $c$-linéaires par morceaux $Q$ de $\Sigma$ telles que $x\in Q\gad$. 

Soit $V$ un domaine analytique $\Gamma$-strict de $X$ tel que $x\in V\gad$ 
et soit $f$ une fonction analytique inversible sur  $V$. L'intersection $V\cap \Sigma$ est une partie
$c$-linéaire par morceaux de $\Sigma$, et la restriction 
de $\abs f$ à  $V\cap \Sigma $
est $c$-linéaire par morceaux. 
Le point $x$ appartient à $(V\cap \Sigma)\gad$, si bien que $\abs f(x)$ est un élément bien défini du groupe
$\theta(x)$. 

Il découle alors de nos constructions que
\[\theta(x)=(\abs{\hr x^\times}\cdot \Gamma)^\Q\;\text{et}\;
\abs f(x)=\abs{f(x)}.\]

\begin{exem}[Le cas de $S_n\gad$]\label{exemple-snad}
Soit $n$ un entier. L'espace $c$-linéaire par morceaux $S_n$ est isomorphe à 
$\rposn n$ \textit{via} $(\abs {T_1},\ldots, \abs{T_n})$ ; modulo cette identification, 
on peut parler d'application
$c$-affine sur $S_n$ : c'est une application
de $S_n$ dans $\rpos$ de la forme
$a\prod \abs{T_i}^{e_i}$
où $a\in (\abs{k^\times}\cdot \Gamma)^\Q$ et
où les $r_i$ appartiennent à $\Q$. 

Il résulte alors du 
lemme \ref{filtres-polytopaux}
et de la remarque \ref{breve-polytope}
que 
la flèche $x\mapsto (\abs{T_i(x)})_i$ établit une bijection entre 
$S_n\gad$ et l'ensemble des $n$-uplets $(r_1,\ldots, r_n)$ d'éléments d'un groupe abélien divisible
$(\abs{k^\times}\cdot \Gamma)$-coloré et bridé, ces $n$-uplets étant considérés modulo la relation d'équivalence
qui identifie deux $n$-uplets satisfaisant les mêmes inéquations monomiales à exposants rationnels et termes
constants dans $\abs{k^\times}\cdot \Gamma$. Soit 
$r=(r_1,\ldots, r_n)$ un tel $n$-uplet ; 
nous noterons $\eta_r$ le point correspondant de $S_n\gad\subset (\mathbf G_{\mathrm m,k}\an)^{n,
\Gamma\text{-}\mathrm{ad}}$.
Lorsqu'on considère $(r_1,\ldots, r_n)$ à 
équivalence près, le groupe abélien ordonné $(\abs{k^\times}\cdot \Gamma)$-coloré et bridé
dans lequel vivent les $r_i$ est mal défini, mais son sous-groupe
engendré par $(\abs{k^\times}\cdot \Gamma)$ et les $r_i$ est canonique, et nous le noterons
$\Theta(r)$. 

Nous allons
maintenant décrire concrètement $\eta_r$,
le point clef étant que 
toute application $c$-linéaire par morceaux de $S_n$ dans $\rpos$ peut être évaluée en $\eta_r$
à partir des formules $\abs{T_i(\eta_r)}=r_i$, la valeur obtenue étant un élément
de $\Theta(r)$.
Soit $V$ un domaine analytique compact et $\Gamma$-strict 
de $\gma n$ 
et soit $f$ une fonction analytique inversible sur $V$. Posons $P=V\cap S_n$. Le compact $P$ est un une partie 
$c$-linéaire par morceaux 
de $S_n$ et $|f|_{|P}$ est $c$-linéaire par morceaux. En tant que $c$-polytope de
$S_n\simeq \rposn n$, le compact $P$ peut être décrit par une combinaison booléenne finie d'inégalités
de la forme $\phi\leq 1$ où $\phi$ est $c$-affine, et le point $\eta_r$ appartient alors
à $V\gad$ si et seulement si cette combinaison booléenne d'inégalités est satisfaite en $\eta_r$. 
Si c'est le cas $\abs f_{|P}(\eta_r)$ a un sens et $\abs{f(\eta_r)}$ est précisément 
égal à l'élément  $\abs f_{|P}(\eta_r)$ de $\Theta(r)$. 

En particulier, supposons que $f$ soit un polynôme de Laurent $\sum a_IT$ en $T_1,\ldots, 
T_n$. On a alors
$\abs f_{|S_n}=\max \abs{a_I}\cdot \abs{T^I}$, si bien que $\abs {f(\eta_r)}=\max a_I r^I$. 
En fait cette formule \textit{caractérise} le point $\eta_r$, et on peut même 
se contenter d'en supposer la validité
pour les polynômes usuels (en degrés positifs). En effet, pour tout $i$, notons
$r^\#$ la borne supérieure dans $\rpos$ de $\{s\in (\abs{k^\times}\cdot \Gamma)^\Q), s\leq r_i\}$
(qui existe puisque le groupe $\abs{k^\times}\cdot \Gamma$-coloré $\Theta(r)$ est bridé). 
Soit $x$ un point de $(\gma n)\gad$ tel que l'on ait 
$\abs{\sum a_I T^I(x)}=\max \abs{a_I}\cdot r^I$ 
pour tout polynôme $\sum a_I T^I\in k[T]$. On a alors par construction 
$\abs{\sum a_I T^I(x^\#)}=\max \abs{a_I}\cdot (r^\#)^I$ 
pour tout polynôme $\sum a_I T^I$ ; autrement dit, $x^\#$ est nécessairement égal
au point $\eta_{r^\#}$ de $S_n$. Et comme l'image de $k[T]$ dans $\hr{\eta_r^\#}$ engendre un sous-corps dense, 
l'application $\sum a_IT^I\mapsto \abs{\sum a_IT^I(x)}$ détermine uniquement $x$
(\ref{raffinement-completion}), qui coïncide donc
avec $\eta_r$.
\end{exem}

\begin{prop}\label{squelad-abhyankar}
Soit $X$ un espace $k$-analytique
$\Gamma$-strict de dimension $\leq n$ et soient
$(f_1,\ldots, f_n)$ des fonctions analytiques inversibles
sur $X$. Soit $\phi$ le morphisme de $X$
vers $\gma n$ induit par les $f_i$ et soit 
$\Sigma$ le $c$-squelette $\phi^{-1}(S_n)$. 
Soit $x\in X\gad$. 
Les assertions suivantes sont équivalentes : 

\begin{enumerate}[i]
\item Le point $x$ appartient à $\Sigma\gad$. 
\item La valuation $x$ est d'Abhyankar et $f_1(x),\ldots,
f_n(x)$ est une base d'Abhyankar de $\hr x$ sur $k$. 
\end{enumerate}
\end{prop}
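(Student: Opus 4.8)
Le point $x$ équivaut à la donnée du corps valué coloré complet $(\hr x,\abs\cdot)$, et la fonctorialité des valuations colorées appliquée à $\phi=(f_1,\ldots,f_n)$ donne $T_i(\phi\gad(x))=f_i(x)$, d'où $\abs{P((f_i)_i)(x)}=\abs{P((T_i)_i)(\phi\gad(x))}$ pour tout $P\in k[T_1,\ldots,T_n]$. Vu la description $S_n\gad=\{\eta_r\}$ de l'exemple \ref{exemple-snad}, où chaque $\eta_r$ est caractérisé parmi les points de $\gma{n,\Gamma\text{-}\mathrm{ad}}$ par l'égalité $\abs{\sum_I a_IT^I(\eta_r)}=\max_I\abs{a_I}r^I$ valable pour tout $\sum_I a_IT^I\in k[T]$, la condition $\phi\gad(x)\in S_n\gad$ équivaut à la condition $(\star)$ suivante, où l'on pose $\abs{f(x)}^I=\prod_i\abs{f_i(x)}^{I_i}$ : \emph{$f_i(x)\neq 0$ pour tout $i$, et $\abs{(\sum_I a_If^I)(x)}=\max_I\abs{a_I}\abs{f(x)}^I$ pour tout $\sum_I a_IT^I\in k[T]$}. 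Il suffira donc d'établir les deux équivalences (i)$\iff(\star)$ et (ii)$\iff(\star)$.

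\textbf{L'équivalence (ii)$\iff(\star)$.} En voyant $k\hookrightarrow\hr x$ comme une extension de corpoïdes valués $\{1\}$-gradués \textit{via} $\abs\cdot$, la caractérisation de l'indépendance algébrique rappelée en \ref{abhyankar} dit que $(\star)$ équivaut à l'indépendance algébrique des $\widetilde{f_i(x)}$ sur $\widetilde k$ dans le corpoïde résiduel, laquelle entraîne celle des $f_i(x)$ sur $k$. Par l'inégalité d'Abhyankar, $(\star)$ jointe à la condition « degré de transcendance de $\hr x$ sur $k$ égal à $n$ » revient alors exactement à dire que $\hr x/k$ est d'Abhyankar de base d'Abhyankar $(f_1(x),\ldots,f_n(x))$, soit (ii). Il restera à voir que $(\star)$ entraîne cette condition de degré de transcendance : en appliquant $\abs\cdot^\#$ (multiplicative et croissante) aux égalités de $(\star)$ on obtient $\phi(x^\#)=\eta_{\abs{f(x^\#)}}\in S_n$, donc $x^\#\in\Sigma$ et $n\leq d_k(x^\#)\leq\dim X\leq n$ ; ainsi $x^\#$ est un point d'Abhyankar de rang maximal de $X$, et l'on conclura en utilisant que $\hr x=\hr{x^\#}$ est alors une extension complète de type fini de $k$, de degré de transcendance $d_k(x^\#)=n$.

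\textbf{L'équivalence (i)$\iff(\star)$, et le point délicat.} L'implication $x\in\Sigma\gad\Rightarrow(\star)$ sera formelle : par \ref{imrec-squel}(3), $\phi|_\Sigma\colon\Sigma\to S_n$ est linéaire par morceaux, d'où une application $\Sigma\gad\to S_n\gad$ envoyant $x$ sur $\eta_r$ avec $r_i=\abs{f_i(x)}$ (l'évaluation étant celle de la valuation $c$-polytopale sur $\Sigma$, \textit{cf.} \ref{valuation-polytopale-plongee}, \ref{val-polytopale-generale} et le paragraphe sur les squelettes adiques), si bien que $\phi\gad(x)\in S_n\gad$. La réciproque $(\star)\Rightarrow x\in\Sigma\gad$ est le cœur de l'énoncé. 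Sachant déjà $x^\#\in\Sigma$, on choisira un voisinage analytique $\Gamma$-strict compact $V$ de $x^\#$ dans $X$ tel que $\Sigma\cap V$ soit un $c$-polytope compact, plongé dans $\rposn N$ \textit{via} $(\abs{h_1},\ldots,\abs{h_N})$ avec $h_i=f_i$ pour $i\leq n$ et les $h_j$ inversibles sur $V$ (possible car $\Sigma$ est un $c$-squelette et $\phi|_\Sigma$ est linéaire par morceaux, quitte à raffiner le plongement). Comme $V$ est un voisinage de $x^\#$, on a $x\in V\gad$ et les $\abs{h_j(x)}$ définissent un type $g$ ; de plus le groupe $(\abs{\hr x^\times}\cdot\Gamma)^\Q$ est engendré par $(\abs{k^\times}\cdot\Gamma)^\Q$ et les $\abs{f_i(x)}$, car $\hr x$ est fini sur $\hr{\phi(x^\#)}$ (dont le groupe des valeurs est $\abs{k^\times}\cdot\prod_i\abs{f_i(x)}^\Z$). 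Il restera alors à montrer que $g\in\dot P$, où $P=(\abs{h_j})(\Sigma\cap V)$ : le type $g$ proviendra d'un point de $\breve\Sigma\cap V\gad\subset\Sigma\gad$ que les $\abs{h_j}$, engendrant une valuation, identifient à $x$ (\ref{raffinement-completion}). Pour établir $g\in\dot P$, on combinera le fait que $\Sigma\cap V=\{z\in V,\ \phi(z)\in S_n\}$ est \emph{exactement} le lieu des égalités de Gauss en les $f_i$, la description explicite de $P$ issue de la preuve du théorème \ref{imrec-squel} (voir \ref{description-sigma} : $P$ est la réunion des cellules de dimension $n$ d'une décomposition de $\tau(\mathscr X\an)$ le long desquelles la projection vers $\rposn n$ est injective), les relations algébriques liant les $h_j$ aux $f_i$ (automatiquement vraies en $x$), et l'élimination des quantificateurs dans les groupes abéliens ordonnés divisibles (\ref{el-quant}). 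C'est ce dernier transfert — de la description berkovichienne « $z\in\Sigma$ pour tout $z$ voisin », une intersection infinie de conditions d'égalité de Gauss, à la condition polytopale finie $g\in\dot P$ portant sur le type adique, via la combinatoire du polytope du théorème \ref{imrec-squel} — qui constitue l'essentiel de la difficulté ; tout le reste est soit formel (fonctorialité, exemple \ref{exemple-snad}), soit une traduction directe \textit{via} le formalisme des corpoïdes résiduels (\ref{abhyankar}, \ref{raffinement-gamma}).
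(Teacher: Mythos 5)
La réduction des deux assertions à la condition de Gauss $(\star)$, l'implication (i)$\Rightarrow(\star)$ (via les domaines $V_P$ définis par les égalités de Gauss, exactement comme dans le texte) et l'équivalence (ii)$\iff(\star)$ (via \ref{abhyankar}) sont correctes. Mais la réciproque $(\star)\Rightarrow$(i), qui est le cœur de la proposition, n'est pas démontrée : vous la ramenez à l'assertion « $g\in\dot P$ », puis vous écrivez vous-même que ce transfert « constitue l'essentiel de la difficulté » et vous vous contentez d'énumérer les ingrédients que vous comptez « combiner ». Ce n'est pas une preuve, et il n'est pas clair que le plan proposé (élimination des quantificateurs plus combinatoire du polytope $P$) puisse aboutir tel quel : le point délicat n'est pas de produire \emph{un} antécédent de $\phi\gad(x)$ dans $\Sigma\gad$ (cela découle de la surjectivité propre à fibres finies de $\Sigma\to\Tau$), mais de montrer que cet antécédent est $x$ \emph{lui-même}, c'est-à-dire d'exclure que la valuation résiduelle $x^\flat$ sur $\hrt{x^\#}^\Gamma$ corresponde à une « mauvaise branche » au-dessus de $\phi(x)^\flat$. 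Votre appel à \ref{raffinement-completion} ne suffit pas : la reconstruction d'une valuation y requiert un sous-corps dense, alors que vous ne disposez que d'un nombre fini de fonctions $h_j$.

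La preuve du texte résout précisément ce point par un argument que votre plan ne contient pas : après s'être ramené au cas où $\phi$ est plat et où $\phi^{-1}(\phi(x^\#))=\{x^\#\}$, on découpe $\Sigma$ en un nombre fini de parties compactes $\Sigma_j$ sur lesquelles $\phi$ est injectif, on relève $\phi(x)$ en des points $x_j\in\Sigma_j\gad$ vérifiant $x_j^\#=x^\#$, on montre que pour \emph{toute} fonction inversible $f$ au voisinage de $x^\#$ il existe $j$ (dépendant \textit{a priori} de $f$) tel que $\abs{f(x)}=\abs{f(x_j)}$, et l'on conclut que $x=x_j$ pour un $j$ \emph{fixe} en invoquant l'indépendance des prolongements d'une valuation à l'extension finie de corpoïdes $\hrt{x^\#}^\Gamma/\hrt{\phi(x^\#)}^\Gamma$ (variante graduée d'un résultat classique de théorie des valuations). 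C'est cet ingrédient d'indépendance qui manque à votre proposition ; sans lui, l'étape « $g\in\dot P$, donc $x\in\Sigma\gad$ » reste un vœu.
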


\begin{rema}\label{remarque-sigma-ad}
Au vu de l'exemple 
\ref{exemple-snad}, l'assertion (ii) revient à demander que $x$
appartienne à $\phi^{-1}(S_n\gad)$. La proposition peut donc se reformuler en 
disant simplement que $\Sigma\gad=\phi^{-1}(S_n\gad)$. 
(Par abus nous noterons encore $\phi$ l'application de $X\gad$ vers 
$(\mathbf G_{\mathrm m,k}\an)^{n,
\Gamma\text{-}\mathrm{ad}}$
induite par $\phi$).

\end{rema}

\begin{proof}
Supposons que (i) soit satisfaite. Soit $P=\sum a_IT^I$ un polynôme
à coefficients dans $k$, où
$T=(T_1,\ldots, T_n)$. Soit $V_P$ le domaine analytique
$\Gamma$-strict de $X$ défini par la condition $\abs{P(f)}=\max_I \abs{a_I}\cdot \abs f^I$ (où $f=(f_1,\ldots, f_n)$). 
On a $\Sigma\subset V_P$ par définition de $\Sigma$, si bien que $\Sigma\gad\subset V_P\gad$. Puisque (i)
est satisfaite par hypothèse, il s'ensuit que 
$x\in V_P\gad$, ce qui veut dire que $\abs{P(f)(x)}=\max_I\abs{a_I}\cdot \abs{f(x)}^I$. Ceci valant pour tout
$P$, l'assertion (ii) est satisfaite.

Supposons réciproquement que (ii) soit satisfaite. 
Pour montrer (i) on peut remplacer $X$ par n'importe quel domaine
affinoïde $\Gamma$-strict $V$ de $X$ tel que $x\in V\gad$, ce qui permet de
supposer que $X$ est affinoïde. L'assertion à prouver étant insensible aux phénomènes
de nilpotence on peut supposer $X$ réduit. 
Comme $x\in \Sigma\gad$, le point
$x^\#$ appartient à $\Sigma$, si bien que $\phi(x^\#)$
appartient à $S_n$. Ceci entraîne que $\phi$ est plat en $x^\#$, 
et on sait  par ailleurs que $\phi^{-1}(\phi(x^\#))$ est fini. 
Quitte
à remplacer $X$ par un voisinage affinoïde $\Gamma$-strict $V$
de $x^\#$ dans $X$ (notons qu'on a
automatiquement $x\in V\gad$) on peut donc supposer que $\phi$
est plat et que $\phi^{-1}(\phi(x^\#))=\{x^\#\}$
(ensemblistement). 
Comme $\phi$ est plat et $X$ compact, 
$\phi(X)$ est un domaine analytique compact et $\Gamma$-strict 
$U$ de $\gma n$. Soit $\Upsilon$ l'intersection $S_n\cap U$. 
Le morphisme $\phi$ induit une surjection $c$-linéaire
par morceaux de $\Sigma$ sur $\Upsilon$, à fibres finies. 
Il existe alors une famille finie $(\Sigma_j)$ de parties $c$-linéaires 
par morceaux compactes de $\Sigma$ qui recouvrent ce dernier et sont telles que
$\phi_{|\Sigma_i}$ soit injective pour tout $i$.  Quitte à raffiner ce recouvrement 
on peut supposer que chacun des $\Sigma_j$ est de la forme $V_j\cap \Sigma$
où $V_j$ est un domaine affinoïde $\Gamma$-rationnel de $X$. Pour tout $i$, posons
$\Upsilon_j=\phi(\Sigma_j)$. Les $\Upsilon_j$ sont des parties $c$-linéaires
par morceaux compactes de $\Upsilon$ et $\Upsilon=\bigcup_j \Upsilon_j$. 

L'hypothèse (ii) implique que $\phi(x)$ est d'Abhyankar et que $(\abs{T_i(\phi(x))})_i$ 
en est une base d'Abhyankar. En vertu de l'exemple \ref{exemple-snad}, cela revient à dire
que $\phi(x)$
appartient à $\Upsilon\gad$. Comme $\Upsilon=\bigcup \Upsilon_j$, l'ensemble $J$ des indices $j$ tels que
$\phi(x)\in \Upsilon_j\gad$ est non vide. Soit $j\in J$. Le morphisme $\phi$ induit un isomorphisme
$c$-linéaire par morceaux $\Sigma_j\simeq \Upsilon_j$. Il existe donc un unique antécédent de $\phi(x)$
sur $\Sigma_j\gad$, que nous noterons $x_j$. Remarquons que pour un 
tel $j$ on a $\phi(x_j^\#)=x^\#$, ce qui entraîne que
$x_j^\#=x^\#$ d'après nos hypothèses. 
Nous allons maintenant démontrer que $x$ est égal à l'un des $x_j$, 
ce qui permettra de conclure. 

Soit $f$ une fonction analytique inversible définie sur un 
voisinage affinoïde $\Gamma$-strict $\Omega$
de $x^\#$ dans $X$. 
Comme $\abs{\hr {\phi(x)}^\times}^\Q=\abs{\hr x^\times}^\Q$
il existe
$a\in k^\times$, des entiers relatifs $e_1,\ldots, e_n$ et un entier $N>0$ tel que 
$\abs{f^N(x)}=\abs a\cdot \left|\prod T_i(\phi(x))^{e_i}\right|=\abs a\cdot  \left|\prod f_i(x)^{e_i}\right|$. Soit $V$ le domaine
affinoïde $\Gamma$-strict de $\Omega$ défini par l'égalité $\abs {f^N}=\abs a\cdot
\prod \abs{f_i^{e_i}}$. Par construction, 
$V\gad$ contient $x$. Par platitude de $\phi$, l'image $W:=\phi(V)$ est un domaine analytique compact et $\Gamma$-strict
de $U$, qui contient $\phi(x)$. Le point $\phi(x)$ appartient donc à $(\Upsilon\cap W)\gad=\Upsilon\gad \cap W\gad$. 
Or $\Upsilon\cap W$ est égal à $\phi(\Sigma\cap V)=\bigcup_j \phi(\Sigma_j\cap V)$, chacun des $\phi(\Sigma_j\cap V)$ est un 
compact $c$-linéaire par morceaux $\Upsilon _j$ de $\Upsilon_j$, et $\phi$ induit
pour tout $j$ un isomorphisme
$\Sigma_j\cap V\simeq \Upsilon_j$.  Le point $\phi(x)$ appartient alors à $\Upsilon_j\gad$ pour un certain $j$, et est donc l'image
d'un unique point $x'$ de $(\Sigma_j\cap V)\gad \subset \Sigma_j\gad$ ; il s'ensuit que $j\in J$ et que $x'=x_j$. On a
alors
\[\abs{f^N(x)}=\abs a\cdot \left|\prod T_i(\phi(x))^{e_i}\right|=\abs a\cdot \left|\prod f_i(x_j)^{e_i}\right|=\abs{f^N}(x_j),\]
la dernière égalité résultant du fait que $x_j\in V\gad$, et de la définition de $V$. On en déduit
que $\abs{f(x)}=\abs{f(x_j)}$. On a donc démontré que pour toute fonction analytique $f$
définie et inversible au voisinage de $x^\#$ sur $X$,
il existe 
$j\in J$ tel que $\abs{f(x)}=\abs {f(x_j)}$. 

Nous allons maintenant conclure. Puisque $x_j^\#=x^\#$ pour tout $j\in J$, les valuations
résiduelles $x^\flat$ et $x_j^\flat$ pour $j\in J$ sont toutes des valuations sur le corpoïde résiduel
$\hrt{x^\#}^\Gamma$, qui induisent la même valuation $\phi(x)^\flat$ sur $\hrt{\phi(x^\#)}^\Gamma$. Or comme 
$\hrt{x^\#}^\Gamma$ est une extension finie de $\hrt{\phi(x^\#)}^\Gamma$, les extensions de
$\phi(x)^\flat$ à $\hrt{x^\#}^\Gamma$ sont indépendantes, par la variante graduée d'un résultat classique
de théorie des valuations. Comme il résulte de ce qui précède que pour tout $\alpha$
non nul dans $\hrt{x^\#}^\times$ il existe $j\in J$ tel que $\abs{\alpha(x^\flat)}=\abs{\alpha(x_j)}^\flat$, 
on en déduit qu'il existe $j\in J$ tel que $x^\flat=x_j^\flat$, ce qui signifie que $x=x_j$. 
\end{proof}

\subsubsection{}
Soient $X, f_1,\ldots, f_n, \phi$ et $\Sigma$ comme dans la 
proposition \ref{squelad-abhyankar} ci-dessus. Soit
$\xi$ un point de $\Sigma$. L'ensemble des points
$x$ de $X\gad$ tels que $x^\#=\xi$ s'identifie
à $\widetilde{(X,\xi)}^\Gamma$ ; l'ensemble des points
$x$ de $\Sigma\gad$ tels que $x^\sharp=\xi$ apparaît ainsi comme un sous-ensemble
de $\widetilde{(X,\xi)}^\Gamma$ que nous nous proposons de décrire. 

Pour ce faire, faisons une première remarque. Soit $(g_1,\ldots, g_n)$ une famille de fonctions
analytiques inversibles sur $X$ obtenues en appliquant (multiplicativement) une matrice 
de $\mathrm M_n(\Z)$ de déterminant non nul à $(f_1,\ldots,f_n)$,
puis en multipliant chacune des fonctions obtenues par un élément
de $k^\times$. 
Soit $\psi$ le morphisme
$X\to \gma n$ induit par les $g_i$. Pour tout point $y$ de $X\gad$, la famille $(\widetilde{g_i(y)})$
est algébriquement indépendante sur $\widetilde k$ si et seulement c'est le cas de la famille 
$(\widetilde{f_i(y)})$. Par conséquent $\Sigma=\psi^{-1}(S_n)$ et $\Sigma\gad=\psi^{-1}(S_n\gad)$
(remarque \ref{remarque-sigma-ad}). 

Choisissons alors $(g_1,\ldots, g_n)$ comme ci-dessus de sorte que la propriété suivante
soit satisfaite : il existe $r$ tel que $\abs{g_i(\xi)}$ appartienne à $\Gamma$
pour tout $i\leq r$, et tel que les $\abs{g_i(\xi)}$ pour $i>r$ soient $\Q$-linéairement
indépendants modulo $\abs{\hr \xi^\times}^\Q\cap (\Gamma\cdot \abs{k^\times})^\Q$. 
Sous cete hypothèse
les $\widetilde {g_i(\xi)}$ pour $1\leq i\leq r$ forment alors
une base de transcendance de $\hrt \xi^\Gamma$ sur $\widetilde k$. 

Soit $x$ un point  de $X\gad$ tels que $x^\#=\xi$. 
Compte-tenu de la définition explicite
du point $x^\flat$ de $\widetilde{(X,\xi)^\Gamma}$ qui correspond à $x$, 
on voit que les assertions suivantes sont équivalentes : 
\begin{enumerate}[i]
\item $\abs{\sum a_Ig^I(x)}=\max \abs{a_I}\cdot \abs{g(x)}^I$
pour tout polynôme $\sum a_I T^I\in k[T_1,\ldots, T_n]$  ; 
\item $\abs{\sum a_Ig^I(x^\flat)}=\max \abs{a_I}\cdot \abs{g(x^\flat)}^I$
pour tout polynôme $\sum a_I T^I\in k[T_1,\ldots, T_r]$.
\end{enumerate}

Il résulte de ce qui précède que
l'ensemble des points $x$ de $\Sigma\gad$ tels que $x^\#=\xi$ s'identifie, en tant que sous-ensemble
de $\widetilde{(X,\xi)^\Gamma}$, à celui formé par les valuations de $\hrt \xi^\Gamma$ dont les
$\widetilde {g_i(\xi)}$ pour $1\leq i\leq r$ forment une base d'Abhyankar
sur $\widetilde k^\Gamma$.

\section{Images directes des squelettes}

\subsection{Squelettes élémentaires et classiques}
Soit $X$ un espace $k$-analytique $\Gamma$-strict. 

\subsubsection{}
Nous appellerons \emph{sous-espace} de $X$ tout 
espace $k$-analytique $Y$ muni d'un monomorphisme
$Y\hookrightarrow X$. 

Précisons que les monomorphismes dans la catégorie $k$-analytique
sont bien compris. On déduit en effet de la version de Temkin du théorème de
Gerritzen-Grauert (\cite{temkin2005}, théorème 1.1)
que pour qu'une flèche $Y\to X$ soit un monomorphisme, il faut et il suffit qu'elle 
soit ensemblistement injective et que $Y$ admette un G-recouvrement affinoïde $(Y_i)$ tel que
$Y_i\to X$ se factorise pour tout $i$ par une immersion fermée $Y_i\hookrightarrow X_i$ où $X_i$ est un domaine
affinoïde de $X$. Si c'est le cas et si $Y$ est $\Gamma$-strict, on peut demander que les $X_i$ et les $Y_i$ soient
$\Gamma$-stricts : on le déduit de la variante $\Gamma$-stricte du théorème de Temkin, qui se démontre comme la version originelle
en remplaçant les réductions graduées générales par les réductions graduées $\Gamma$-strictes. 
Un sous-espace de $X$ s'identifie donc topologiquement à une partie de $X$, et nous utiliserons souvent implicitement 
cette identification. 

Si $Y$ est compact et $X$ topologiquement séparé, une flèche $Y\to X$ est un monomorphisme si et seulement si elle se factorise 
par une immersion fermée $Y\hookrightarrow V$ où $V$ est un domaine analytique compact de $X$, qu'on peut prendre $\Gamma$-strict
si $Y$ est $\Gamma$-strict : prendre une famille finie $(Y_i,X_i)$ comme ci-dessus et, dans chaque $X_i$, un domaine analytique compact $X'_i$, 
qu'on choisit $\Gamma$-strict si $Y$ est $\Gamma$-strict, tel que $X'_i\cap Y=Y_i$ ; il n'y a plus alors qu'à poser $V=\bigcup X'_i$. 

On peut montrer en adaptant le raisonnement ci-dessus
(mais nous n'en aurons pas besoin) que si $Y$ est paracompact 
et $X$ topologiquement séparé, une flèche $Y\to X$ est un monomorphisme si et seulement si elle se factorise 
par une immersion fermée $Y\hookrightarrow V$ où $V$ est un domaine analytique paracompact de $X$, qu'on peut prendre $\Gamma$-strict
si $Y$ est $\Gamma$-strict. Il est probable que cela est faux en général sans les hypothèses de paracompacité et séparation. 

\subsubsection{}
Nous noterons $\sel X$ l'ensemble des parties $\Sigma$
de $X$ possédant la propriété suivante : il existe un sous-espace 
$\Gamma$-strict $Y$ de $X$ purement de dimension $d$
et un morphisme
$f\colon Y\to \gma d$ tel que $\Sigma$ soit une partie
$c$-linéaire par morceaux du $c$-squelette $f^{-1}(S_d)$
(que ce dernier soit un $c$-squelette résulte du théorème \ref{imrec-squel}). 

\subsubsection{}
Soit $\Sigma\in \mathscr T(X)$ et soient $d, Y$ et $f$ comme ci-dessus. Par construction, 
$\Sigma$ est un $c$-squelette et $d_k(x)=d$ pour tout $x\in \Sigma$. 
Notons qu'on peut remplacer $(Y,f)$ par $(Y',f|_{Y'})$ pour tout 
sous-espace $\Gamma$-strict
$Y'$ de $Y$ purement de dimension 
$d$ et contenant $\Sigma$. 
Cela permet par exemple si besoin de supposer $Y$ normal : on commence par remplacer $Y$
par $Y_{\mathrm{red}}$, puis l'on observe que tout point de $y
$ tel que $d_k(y)=d$ (et en particulier tout point de $\Sigma$)
est alors contenu dans le lieu normal de $Y$ ; on remplace alors
$Y$ par son lieu normal. 
Cela permet aussi, si $\Sigma$ est compact et $Y$
topologiquement séparé, de supposer $Y$ compact, en le
remplaçant 
par n'importe lequel de ses domaines analytiques compacts et $\Gamma$-stricts
contenant $\Sigma$.

\begin{defi}
Nous dirons qu'un $c$-squelette de $X$ est \textit{élémentaire}
s'il appartient à $\sel X$. 
\end{defi}

\begin{lemm}\label{squel-elem-gloc}
Soit $d$ un entier et soit $X$ un espace $k$-analytique de dimension
au plus $d$. 
Soit $(X_i)$ une famille localement finie de domaines analytiques $\Gamma$-stricts fermés de $X$, et pour tout $i$
soit $f_i$ un morphisme de $X_i$ vers $\gma {d}$ et soit $\Sigma_i$ une partie $c$-linéaire
par morceaux fermée du $c$-squelette $f_i^{-1}(S_d)$. La réunion des $\Sigma_i$ est un $c$-squelette de $X$.
\end{lemm}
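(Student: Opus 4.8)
The plan is to reduce the statement to a local one on $X$, fix a point of $\Sigma:=\bigcup_i\Sigma_i$, and there apply the criterion of Lemma~\ref{critere-squelette}. Let me first record the easy facts. Since each $X_i$ is closed in $X$ and $S_d$ is closed in $\gma d$, each $\Sigma_i$ is closed in $X$; the family $(\Sigma_i)$ being locally finite, $\Sigma$ is closed and the $\Sigma_i$ form a G‑recouvrement of it. Moreover each $\Sigma_i$ is a $c$‑squelette of $X$: it is a $c$‑linéaire par morceaux part of the $c$‑squelette $f_i^{-1}(S_d)$ of $X_i$ (Theorem~\ref{imrec-squel}), hence a $c$‑squelette of $X_i$ by Lemma~\ref{c-lin-dom}, and then a $c$‑squelette of $X$ because $X_i$ is locally closed in $X$ and the induced enriched domanial structures are transitive (\ref{locferm-enrichi}); note also $d_k(\xi)=d$ for every $\xi\in\Sigma$. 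Finally, being a $c$‑linéaire par morceaux enriched domanial space can be tested locally on $X$ (an open cover is a G‑recouvrement, and a G‑recouvrement of a G‑recouvrement is one), so it suffices to show that every point $\xi$ of $\Sigma$ has an open neighbourhood $U$ in $X$ with $\Sigma\cap U$ a $c$‑squelette of $X$.

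So I would fix $\xi\in\Sigma$ and localise. Replacing $X$ by $X_{\mathrm{red}}$ (harmless, as at the end of \ref{gamma-an-enrichi}), the local ring $\mathscr O_{X,\xi}$ is artinian and reduced, hence a field, so $X$ is quasi‑smooth at $\xi$ (reducing to $k$ perfect by the device of the proof of Theorem~\ref{imrec-squel}); by \cite{ducros2012b}, 0.21, I may shrink $X$ to a $\Gamma$‑strict affinoid domain of some $\mathscr X\an$. By local finiteness the family $(X_i)$ becomes finite and, shrinking once more, I may assume $\xi\in\Sigma_i$ for every $i$, so $\Sigma=\Sigma_1\cup\dots\cup\Sigma_r$. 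Each $\Sigma_l$ being a $c$‑squelette of $X$ through $\xi$, there are a compact $\Gamma$‑strict analytic domain $W_l\ni\xi$ of $X$, a neighbourhood of $\xi$ in $\Sigma_l$, and finitely many functions invertible on $W_l$ whose renormalised, rationally‑powered absolute values realise the $c$‑linéaire par morceaux structure of $W_l\cap\Sigma_l$. I would then pick a compact $\Gamma$‑strict affinoid domain $V$ of $X$ which is a neighbourhood of $\xi$ and is contained in $\bigcap_l W_l$, let $\mathcal F$ collect all of the above functions together with finitely many functions invertible on $V$ separating the points of the compact set $\Sigma\cap V$, and set $E:=\{a\in\mathscr O_X(V):a\text{ vanishes nowhere on }\Sigma\cap V\}$. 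Since the points of $\Sigma\cap V$ are Abhyankar of rank $d=\dim V$, hence Zariski‑dense in their components, $E$ consists of units of $\mathscr O_X(\Sigma\cap V)$ and contains $\mathcal F$; and, $\mathscr O_X(V)$ being dense in each $\hr{\xi'}$ by $\Gamma$‑strict affinoid generalities and $\hr{\xi'}$ being a valued field, the subgroup of $\hr{\xi'}^\times$ generated by $\{a(\xi'):a\in E\}$ is dense for every $\xi'\in\Sigma\cap V$, i.e. hypothesis~(B) of Lemma~\ref{critere-squelette} holds.

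It remains to check hypothesis~(A): for every finite subfamily $b_1,\dots,b_s$ of $E$, the map $y\mapsto\big((|a(y)|)_{a\in\mathcal F},|b_1(y)|,\dots,|b_s(y)|\big)$ is a homeomorphism of $\Sigma\cap V$ onto a $c$‑linéaire par morceaux part of a Euclidean space. Injectivity holds because $\mathcal F$ contains the separating functions, so this map is a homeomorphism onto its image by compactness; and the image is the union over $l$ of the images of the $\Sigma_l\cap V$, each of which is a $c$‑linéaire par morceaux part, since the coordinates other than those realising the embedding of $\Sigma_l\cap V$ are sections of the structure sheaf of the $c$‑squelette $\Sigma_l\cap V$ — hence $c$‑linéaire par morceaux functions of the embedding coordinates — so that the image of $\Sigma_l\cap V$ is the graph of a $c$‑linéaire par morceaux map over a $c$‑polytope; a finite union of such parts is again one. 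Lemma~\ref{critere-squelette} then gives that $\Sigma\cap V$ is a $c$‑squelette of $X$, and its interior in $\Sigma$ is the required neighbourhood of $\xi$ (an open subset of a $c$‑squelette, hence a $c$‑linéaire par morceaux part of it, hence a $c$‑squelette). The delicate points — where most of the work hides — are the trivialisation of the $\Sigma_l$ near $\xi$ by functions living on a \emph{common} analytic domain $V$ of $X$ (the given subdomains $X_i$ need not be neighbourhoods of $\xi$), and, inside (A), the fact that a function which is $c$‑linéaire par morceaux on each member of a finite closed cover of a $c$‑linéaire par morceaux space is $c$‑linéaire par morceaux; Theorem~\ref{imrec-squel} itself enters only through the assertion that each $f_i^{-1}(S_d)$, hence each $\Sigma_i$, is a $c$‑squelette.
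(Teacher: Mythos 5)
Your strategy (localise at a point $\xi$ of $\Sigma$ and verify the two hypotheses of Lemma \ref{critere-squelette} directly) is genuinely different from the paper's, but it breaks down exactly at the step you yourself flag as ``where most of the work hides'': the claim that each $\Sigma_l$ through $\xi$ can be charted by finitely many functions invertible on a compact $\Gamma$-strict analytic domain $W_l$ of $X$ in such a way that $\bigcap_l W_l$ contains an affinoid \emph{neighbourhood} $V$ of $\xi$ in $X$. Nothing in the definition of $\Sigma_l$ as a pl part of $f_l^{-1}(S_d)$ provides this: the functions $f_{l1},\dots,f_{ld}$ and the auxiliary functions realising the pl structure live only on analytic domains of $X_l$, and $X_l$ is merely a \emph{closed} analytic domain of $X$, not a neighbourhood of $\xi$. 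Without such a $V$ you can neither form $E$ with $\mathcal F\subset E$ nor run hypothesis (A). This missing step is precisely what the paper's proof supplies, by approximation: since the image of $\mathscr O_{X,\xi}$ is dense in $\hr \xi$, each $f_{lj}$ can be replaced by a function $g_{lj}$ defined and invertible on a genuine neighbourhood $V$ of $\xi$ in $X$ satisfying $\abs{g_{lj}-f_{lj}}<\abs{f_{lj}}$ on $V\cap X_l$, whence $\widetilde{g_{lj}(y)}=\widetilde{f_{lj}(y)}$ for all $y\in V\cap X_l$ and $g_l^{-1}(S_d)\cap V\cap X_l=f_l^{-1}(S_d)\cap V\cap X_l$; one then invokes the multi-morphism case of Theorem \ref{imrec-squel} for the family $(g_l\colon V\to\gma d)_l$, so that $\bigcup_l g_l^{-1}(S_d)$ is a $c$-squelette of $V$ containing $\Sigma\cap V$ as a finite union of compact pl parts. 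Until you prove your trivialisation claim (and the natural proof is this approximation, after which Lemma \ref{critere-squelette} is no longer needed), the argument has a hole at its central step.

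Two secondary points. The algebraisation paragraph (reduction to $k$ perfect, quasi-smoothness at $\xi$, embedding into $\mathscr X\an$) is never used in what follows and should be deleted; importing the radicial-invariance reduction here would itself require transporting the $\Sigma_i$ and their structures. And hypothesis (B) is not quite established as written: a function $a\in\mathscr O_X(V)$ approximating a prescribed $\alpha\in\hr{\xi'}^\times$ need not belong to your $E$, since it may vanish at points of $\Sigma\cap V$ lying on irreducible components of $V$ other than the one containing $\xi'$; this is repairable but requires an argument.
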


\begin{proof}
Introduisons quelques notations. On pose $\Sigma=\bigcup_i
\Sigma_i$ et pour tout $i$ l'on désigne par 
$(f_{i1},\ldots, f_{id})$ le $d$-uplet de fonctions inversibles sur $X_i$
qui définit $f_i$.

L'assertion à prouver est G-locale, ce qui permet de supposer $X$ affinoïde. 
La famille $(X_i)$ est alors une
famille finie de domaines analytiques $\Gamma$-stricts compacts de $X$. Soit $x\in X$ et soit $I$ l'ensemble
des indices
$i$ tels que $x\in X_i$. Pour tout $i\in I$
et tout $j$ entre $1$ et $d$, il existe une fonction analytique $g_{ij}$ définie et inversible au voisinage de
$x$ telle que $|g_{ij}(x)-f_{ij}(x)|<|f_{ij}(x)|$. Soit $V$ un voisinage affinoïde et $\Gamma$-strict de $x$ tel que $V\cap X_j=\emptyset$
pour tout $j\notin I$, et  tel que les $g_{ij}$ soient toutes définies et inversibles sur $V$ et satisfassent les inégalités
$|g_{ij}-f_{ij}|<|f_{ij}|$ sur $V\cap X_i$ pour tout $i\in I$ et tout $j$. Posons $g_i=(g_{i1},\ldots, g_{id})$ pour tout $i\in I$. 
On a alors $((g_i)_{|V\cap X_i})^{-1}(S_d)=((f_i)_{|V\cap X_i})^{-1}(S_d)$ pour tout $i\in I$, si bien que $\Sigma_i\cap V$ est contenu pour
tout $i\in I$ dans le $c$-squelette $g_i^{-1}(S_d)$, 
et \textit{a fortiori}
dans le $c$-squelette $\Tau:=\bigcup_i
g_i^{-1}(S_d)$. Il en va de même de $\Sigma_ i\cap V$ pour $i\notin I$ car alors
$\Sigma_i \cap V=\emptyset$. Par conséquent $\Sigma\cap V$ est la réunion finie des parties $c$-linéaires
par morceaux compactes $\Sigma_i\cap V$ de $\Tau$ ; c'est donc une partie $c$-linéaire par morceaux
compacte de $\Tau$, et en particulier un $c$-squelette. 
Le point $x$ ayant été arbitrairement choisi, $\Sigma$ est un $c$-squelette. 
\end{proof}

\begin{prop}\label{gunion-squelettes}
Soit $\Sigma$ une partie de $X$ possédant un $G$-recouvrement par des $c$-squelettes élémentaires. 
La partie $\Sigma$ est alors un $c$-squelette. 
\end{prop}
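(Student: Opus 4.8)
Nous allons raisonner G-localement sur $X$ : le caractère $c$-linéaire par morceaux se testant G-localement, il suffit de prouver que $\Sigma$ est un $c$-squelette au voisinage de chacun de ses points. Fixons donc $x\in\Sigma$ ; quitte à restreindre $X$ on le suppose affinoïde et $\Gamma$-strict, et quitte à remplacer $\Sigma$ par un voisinage convenable de $x$ dans $\Sigma$ on suppose que $\Sigma=\bigcup_{i\in I}\Sigma_i$ pour un ensemble fini $I$ tel que $x\in\Sigma_i$ pour tout $i$, chaque $\Sigma_i$ étant un $c$-squelette élémentaire. Écrivons $\Sigma_i$ comme une partie $c$-linéaire par morceaux fermée de $f_i^{-1}(S_{d_i})$, où $f_i\colon Y_i\to\gma{d_i}$ est induit par $d_i$ fonctions inversibles $f_{i1},\dots,f_{id_i}$ sur un sous-espace analytique fermé $Y_i$, purement de dimension $d_i$, d'un domaine analytique $\Gamma$-strict $V_i$ de $X$ contenant $x$. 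Comme $x\in\Sigma_i\subset f_i^{-1}(S_{d_i})\cap Y_i$, on a $d_k(x)\ge d_k(f_i(x))=d_i$ et $d_k(x)\le\dim_xY_i=d_i$ ; donc $d_i=d_k(x)=:d$ pour tout $i\in I$.

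On se ramène ensuite à une dimension ambiante $\le d$. Choisissons un domaine affinoïde $\Gamma$-strict $W$ de $X$ contenant $x$ et contenu dans $\bigcap_{i\in I}V_i$, et posons $Y=\bigcup_{i\in I}(Y_i\cap W)$ : c'est un sous-espace analytique fermé de $W$, de dimension $\le d$, qui contient $\Sigma\cap W=\bigcup_{i\in I}(\Sigma_i\cap W)$. La structure domaniale enrichie étant compatible au passage à un domaine (structure induite, \ref{locferm-enrichi}) et aux immersions fermées (\ref{squel-imm-ferm}), une partie de $\Sigma\cap W$ est un $c$-squelette de $X$ si et seulement si c'en est un de $W$, si et seulement si c'en est un de $Y$ ; il suffit donc de montrer que $\Sigma\cap W$ est un $c$-squelette de $Y$.

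Reste à relever les fonctions $f_{ij}$ dans l'espace ambiant. L'immersion fermée $Y_i\cap W\hookrightarrow W$ induit une surjection $\mathscr O_X(W)\to\mathscr O_X(Y_i\cap W)$, de sorte que chaque $f_{ij}$ se relève en une fonction $\tilde f_{ij}\in\mathscr O_X(W)$ ; en restreignant ces fonctions à $Y$ puis en restreignant $Y$ au besoin, on obtient un domaine affinoïde $\Gamma$-strict $Y'$ de $Y$ contenant $x$ (donc de dimension $\le d$) sur lequel toutes les $\tilde f_{ij}$, $i\in I$, sont inversibles ; on note $\tilde f_i\colon Y'\to\gma d$ le morphisme correspondant. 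D'après le théorème~\ref{imrec-squel}, $\tilde f_i^{-1}(S_d)$ est un $c$-squelette de $Y'$. Comme $\tilde f_{ij}$ coïncide avec $f_{ij}$ sur $Y_i$, on a $f_i^{-1}(S_d)\cap Y'=\tilde f_i^{-1}(S_d)\cap(Y_i\cap Y')$ ; le membre de gauche est un $c$-squelette de $Y'$ (théorème~\ref{imrec-squel} appliqué à $Y_i\cap Y'$, puis \ref{squel-imm-ferm}), contenu dans le $c$-squelette $\tilde f_i^{-1}(S_d)$, donc une partie $c$-linéaire par morceaux de ce dernier ; et $\Sigma_i\cap Y'$ en est une partie $c$-linéaire par morceaux fermée, puisque $\Sigma_i$ l'est de $f_i^{-1}(S_d)$. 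Il ne reste plus qu'à appliquer le lemme~\ref{squel-elem-gloc} à l'espace $Y'$, muni de la famille finie constituée du seul domaine fermé $Y'$ indexée par $I$, des morphismes $\tilde f_i$ et des parties $\Sigma_i\cap Y'$ : on en déduit que $\Sigma\cap Y'=\bigcup_{i\in I}(\Sigma_i\cap Y')$ est un $c$-squelette de $Y'$, donc de $Y$, donc de $X$ ; comme c'est un voisinage de $x$ dans $\Sigma$ et que $x$ était arbitraire, $\Sigma$ est un $c$-squelette.

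Le point délicat est cette dernière étape : il s'agit de substituer aux sous-espaces fermés $Y_i$ un espace ambiant commun de dimension $\le d$ afin de pouvoir invoquer le lemme~\ref{squel-elem-gloc}, qui ne traite que des domaines (et non des sous-espaces fermés) comme supports des morceaux élémentaires. Cela demande à la fois de former $Y=\bigcup_i(Y_i\cap W)$ pour abaisser la dimension à $\le d$ et de relever les fonctions définissantes au travers des immersions fermées, en vérifiant à chaque étape que les parties en jeu restent $c$-linéaires par morceaux et conservent leur caractère fermé ; tout le reste se ramène formellement à des résultats déjà établis.
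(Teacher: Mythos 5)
Votre schéma général (localisation en un point $x$, égalité des $d_i$ en $x$, relèvement des $f_{ij}$ à un ambiant de dimension $\le d$ et application du lemme \ref{squel-elem-gloc}) est proche de celui de l'article, et la fin de l'argument — l'identification $f_i^{-1}(S_d)\cap Y'=\tilde f_i^{-1}(S_d)\cap(Y_i\cap Y')$, le fait qu'un $c$-squelette contenu dans un autre en est une partie $c$-linéaire par morceaux, l'invocation du lemme — est correcte. Le trou se situe au moment où vous remplacez tout par l'intersection avec un domaine $W\subset\bigcap_{i\in I}V_i$ : les domaines analytiques ne sont que localement fermés, et $\bigcap_i V_i$ n'est en général pas un voisinage de $x$ dans chacun des $V_j$ ; il en résulte que $\Sigma_j\cap W'$ n'est pas nécessairement un voisinage de $x$ dans $\Sigma_j$, et que l'ensemble $\Sigma\cap Y'$ que vous construisez n'est pas un voisinage de $x$ dans $\Sigma$, contrairement à ce qu'affirme votre dernière phrase. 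Exemple : $X=\{1/2\le\abs T\le 2\}$ dans $\mathbf G_{\mathrm m,k}\an$, $\Sigma_1=\{\eta_r\}_{r\in[1/2,1]}$ avec $V_1=Y_1=\{1/2\le\abs T\le 1\}$ et $f_1=T$, puis $\Sigma_2=\{\eta_r\}_{r\in[1,2]}$ avec $V_2=Y_2=\{1\le\abs T\le 2\}$ et $f_2=T$ ; en $x=\eta_1$ on est contraint de prendre $I=\{1,2\}$, mais $V_1\cap V_2=\{\abs T=1\}$, donc $\Sigma\cap W'\subset\{\eta_1\}$ quel que soit le choix de $W'$ : ce n'est pas un voisinage de $\eta_1$ dans $\Sigma$.

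C'est précisément cette difficulté que la preuve de l'article affronte : elle ne cherche pas d'ambiant commun dans $\bigcap_i V_i$, mais fixe $i$ et montre que $\Sigma_i$ est un \emph{domaine} de $\Sigma$, en observant (par un argument de dimension reposant sur l'égalité $d_k=d_j$ sur les squelettes) que les morceaux $\Sigma_{jm}$ susceptibles de rencontrer $\Sigma_{i\ell}$ sont nécessairement portés par des composantes irréductibles de $V_j\cap F_{i\ell}$, donc par des domaines fermés du même espace $F_{i\ell}$, de dimension $d_i$ ; le lemme \ref{squel-elem-gloc} est alors appliqué avec des supports fermés \emph{distincts}, souplesse que votre réduction à l'unique ambiant $Y'$ interdit. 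Signalons enfin deux imprécisions secondaires : la réduction au cas où les $\Sigma_i$ sont des parties fermées (voire compactes) de $f_i^{-1}(S_{d_i})$ demande de raffiner le G-recouvrement, la définition d'un squelette élémentaire ne l'imposant pas ; et il faudrait choisir explicitement $W$ puis $Y'$ comme des \emph{voisinages} de $x$ et non comme de simples domaines le contenant — ce qui, même fait correctement, ne suffit pas à combler le trou principal.
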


\begin{proof}
L'assertion est locale sur $\Sigma$, ce qui permet de supposer que $X$ est compact et qu'il existe une
famille finie $(\Sigma_i)_{i\in I}$ de $c$-squelettes élémentaires compacts de $X$ tels que 
$\Sigma=\bigcup_i \Sigma_i$. Pour tout $i$ on peut choisir un  domaine analytique compact et $\Gamma$-strict
$V_i$
de $X$, un sous-espace analytique fermé $Y_i$ de $V_i$
purement de dimension $d_i$ pour un certain $d_i$, 
et un morphisme $f_i\colon Y_i\to \gma {d_i}$ tel que $\Sigma_i$ soit une partie $c$-linéaire par morceaux
de $f_i^{-1}(S_{d_i})$. 

Comme chaque $\Sigma_i$ est un $c$-squelette compact et comme les $\Sigma_i$ recouvrent $\Sigma$ il suffit,
pour prouver que $\Sigma$ est un $c$-squelette, de démontrer que pour tout $i$, le compact $\Sigma_i$
est un domaine de $\Sigma$ pour la structure domaniale naturelle de ce dernier, c'est-à-dire qu'il est égal à la trace
sur $\Sigma$ d'un domaine analytique compact et $\Gamma$-strict de $X$. Fixons donc $i$. Pour tout $j$, nous noterons
$(F_{j\ell})_{\ell\in \Lambda_j}$ la famille finie des composantes irréductibles de $Y_j\cap V_i$, et 
pour tout $(j,\ell)$ nous poserons $\Sigma_{j\ell}=\Sigma_j\cap F_{j\ell}$. 

Soit $\ell\in \Lambda_i$
et soit $(j,m)$ un couple d'indices. 

Si $d_j\neq d_i$ alors $\Sigma_{i\ell}\cap \Sigma_{jm}=\emptyset$, 
puisque $d_k(x)=d_i$ (resp. $d_j$) pour tout $x\in \Sigma_i$ (resp. $\Sigma_j$).

Supposons que $d_j=d_i$ et que $F_{jm}$ n'est pas une composante irréductible de $V_j\cap F_{i\ell}$. L'intersection 
 $F_{i\ell}\cap F_{jm}$ est alors un fermé de Zariski de $V_j\cap V_i$ de dimension $<d_i$, et il ne contient donc aucun point de
 $\Sigma_i$ ni de $\Sigma_j$ ; par conséquent, $\Sigma_{i\ell}\cap \Sigma_{jm}=\emptyset$. 
 
Les $\Sigma_{i\ell}$ pour $\ell$ variable sont deux à deux disjoints. Il s'ensuit en vertu de ce qui précède qu'il existe
une famille $(W_\ell)_{\ell\in \Lambda_i}$ de domaines analytiques compacts et $\Gamma$-stricts
deux à deux disjoints de $V_i$ tels que : 
\begin{itemize}[label=$\diamond$]
\item $W_\ell$ contient $\Sigma_{i\ell}$ pour tout $\ell$ ; 
\item pour tout $\ell$ et tout $(j,m)$ tel que $d_j\neq d_i$ ou tel que $F_{jm}$ ne
soit pas une composante irréductible de $V_j\cap F_{i\ell}$, on a $W_\ell\cap \Sigma_{jm}=\emptyset$.
\end{itemize}

Pour tout $\ell\in \Lambda_i$, soit
$\mathscr J(\ell)$ l'ensemble des couples
$(j,m)$ tels que
$d_j=d_i$ et que $F_{jm}$ soit une composante irréductible de $V_j\cap F_{i\ell}$.
Pour tout $\ell$ l'intersection de $\Sigma$ et $W_\ell$ est par construction la réunion de 
$\Sigma_{i\ell}$ et de parties de la forme $\Sigma_{jm}\cap W_\ell$, où $(j,m)
\in \mathscr J(\ell)$. 

Soit $\ell\in \Lambda_i$ et soit $(j,m)\in\mathscr J(\ell)$. 
Le $c$-squelette $\Sigma_{jm}$ est contenu dans
la composante irréductible $F_{jm}$
du
domaine analytique $\Gamma$-strict
$V_j\cap F_{i\ell}$ de $F_{i\ell}$. Puisqu'il est constitué
de points $x$ tels que $d_k(x)=d_i$, ce $c$-squelette ne rencontre
aucune autre composante irréductible de $V_j\cap F_{i\ell}$ ; il
est donc contenu dans un domaine analytique compact
et $\Gamma$-strict $U_{jm}$ de $V_j\cap F_{i\ell}$ qui
lui-même n'en rencontre que la composante 
$F_{jm}$. Le $c$-squelette $\Sigma_{jm}$ est une partie
$c$-linéaire par morceaux de $((f_j)_{|U_{jm}})^{-1}
(S_{d_i})$, et $\Sigma_{jm}\cap W_\ell$ est dès lors
une partie $c$-linéaire par morceaux de 
$((f_j)_{|U_{jm}\cap W_\ell})^{-1}
(S_{d_i})$.

Ceci valant pour tout $(j,m)\in \mathscr J(\ell)$, il résulte du lemme
\ref{squel-elem-gloc}
que $\Sigma\cap W_\ell$ est un $c$-squelette de 
$F_{i\ell}$. Comme $\Sigma_{i\ell}$ en est une partie
$c$-linéaire par morceaux compacte, il existe un domaine
analytique $\Gamma$-strict et compact
$W'_\ell$ de $W_\ell$ tel que $\Sigma_{i\ell}=\Sigma
\cap W'_\ell$.

On a dès lors par construction
$\Sigma_i=\left(\coprod_\ell W'_\ell\right)
\cap \Sigma$, ce qui achève la démonstration. 
\end{proof}

\begin{defi}
Soit $X$ un espace $k$-analytique $\Gamma$-strict. Nous dirons qu'un
$c$-squelette $\Sigma$ de $X$ est
\textit{classique} si $\Sigma$ possède un G-recouvrement
par des $c$-squelettes élémentaires. 
Nous noterons $\scl X$ l'ensemble des $c$-squelettes classiques
$X$.\end{defi}

\begin{prop}\label{recap-squelettes-classiques}
Soit
$X$ un espace $k$-analytique $\Gamma$-strict.

\begin{enumerate}[1]
\item Toute partie de $X$ qui est G-recouverte par des éléments de
$\scl X$ appartient à $\scl X$. 
\item Si $Z$ est un
sous-espace $\Gamma$-strict de $X$, une partie
de $Z$ appartient à $\scl Z$ si et seulement si elle appartient à 
$\scl Z$. 
\item Soit $\Sigma$ un élément de 
$\sel X$,  resp. $\scl X$, et soit $\Tau$ un sous-ensemble de $\Sigma$.
Alors $\Tau$
appartient à $\sel X$, resp. $\scl X$, si et seulement
si c'est une partie $c$-linéaire par morceaux de
$\Sigma$. 
\item L'intersection d'une famille finie d'éléments de 
$\scl X$ appartient à $\scl X$. 

\item Soit $\Sigma$ appartenant à $\scl X$. Il existe un G-recouvrement $(\Sigma_i)$ de
$\Sigma$ (qu'on peut prendre égal à $\{\Sigma\}$ si $\Sigma$ est élémentaire) 
et pour tout $i$ un entier $d_i$ et un sous-espace $Z_i$ de $X$ purement de dimension $d_i$ tel que $\Sigma_i\subset Z_i$ et $d_k(x)=d_i$ 
pour tout $x\in \Sigma_i$. 

\item Pour tout $\Sigma\in \scl X$ l'application $x\mapsto d_k(x)$ de $\Sigma$ dans $\N$ est localement constante. 

\item Soit $f\colon Y\to X$ un morphisme d'espaces $k$-analytiques
$\Gamma$-stricts. Pour tout élément $\Sigma$ de $\sel X$,
resp. $\scl X$, 
tel que $f^{-1}(x)$ soit vide ou de dimension nulle quel que soit
$x\in \Sigma$,
l'image réciproque $f^{-1}(\Sigma)$ appartient à $\sel Y$, resp. $\scl Y$.

\end{enumerate} 

\end{prop}

\begin{proof}
On montre chacune des assertions séparément.

\paragraph{Preuve de (1)}
Toute partie de $X$ qui est G-recouverte par des éléments de $\scl X$ est G-recouverte par des éléments
de $\sel X$ ; c'est alors un $c$-squelette
par la proposition \ref{gunion-squelettes}, et ce $c$-squelette est classique par définition.

\paragraph{Preuve de (2)}
Soit $Z$ un sous-espace de $X$, et soit
$\Sigma$ une partie de $Z$. Supposons que $\Sigma$ est un $c$-squelette classique de $Z$, 
resp. $X$, et montrons que c'est un $c$-squelette classique de $X$, resp. $Z$. 
On peut raisonner G-localement sur $\Sigma$ ce qui permet de supposer
qu'il existe un sous-espace $\Gamma$-strict 
$T$ de $Z$, resp. $X$, purement de dimension $d$ pour un certain $d$, 
et un morphisme $f\colon T\to \gma d$ tel que $\Sigma$ soit une partie 
$c$-linéaire par morceaux de $f^{-1}(S_d)$. 

Dans le premier cas, $T$ est aussi un sous-espace $\Gamma$-strict de $X$, si bien que $\Sigma$ 
appartient à $\sel X$. 

Dans le second cas, $\Sigma$ est contenu dans le sous-espace analytique fermé $Z\cap T$ de $Z$ ; comme $Z\cap T$
est de dimension $\leq d$ et que $d_k(x)=d$ pour tout $x\in \Sigma$, on a $\Sigma\subset T'$ où $T'$ désigne la réunion des
composantes irréductibles de $Z\cap T$ de dimension $d$ (munie disons de sa structure réduite); 
lle couple $(T',f|_{T'})$ atteste alors que $\Sigma\in \sel X$.

\paragraph{Preuve de (3)}
Si 
$\Tau$ est un $c$-squelette (sans épithète particulière) contenu dans $\Sigma$ 
c'est automatiquement une partie $c$-linéaire par morceaux de $\Sigma$. 
Réciproquement, supposons que $\Tau$ soit une partie $c$-linéaire par morceaux de $\Sigma$. 
Si $\Sigma$ est élémentaire, il résulte immédiatement
de la définition que $\Tau$ est élémentaire. 
Il s'ensuit par raisonnement G-local que si $\Sigma$ est classique, 
$\Tau$ est classique.

\paragraph{Preuve de (4)}
Soit $(\Sigma_i)$ une famille finie de $c$-squelettes classiques. Pour tout $i$, le $c$-squelette $\Sigma_i$ est 
localement fermé dans $X$, donc fermé dans un certain ouvert $U_i$ de $X$. 
Soit $U$ l'intersection des $U_i$. Chacun des $\Sigma_i\cap U$ est une partie $c$-linéaire par morceaux de $\Sigma_i$,
donc une partie $c$-squelettique classique de $X$ d'après (3). Les $\Sigma_i\cap U$ étant fermés dans $U$, ils constituent un 
G-recouvrement de leur réunion $\Sigma$. Celle-ci est donc un $c$-squelette classique de $X$ d'après (1). 
L'intersection $\bigcap \Sigma_i=\bigcap (\Sigma_i\cap U)$ est alors une partie $c$-linéaire par morceaux de
$\Sigma$, et partant une partie $c$-squelettique classique de $X$ au vu de (3). 

\paragraph{Preuve de (5) et (6)}
L'assertion (5) est G-locale par nature, ce qui permet pour la montrer de supposer que $\Sigma$ est élémentaire, et elle
découle alors de la définition. L'assertion (6) en est une conséquence directe, compte-tenu du fait qu'une application G-localement
constante est localement constante.


\paragraph{Preuve de (7)}
Quitte à remplacer $Y$ par le lieu de dimension relative nulle de $f$
(qui en est un ouvert de Zariski), on peut supposer $f$ de dimension 
relative nulle. L'énoncé à prouver étant G-local sur $\Sigma$, 
on peut supposer ce dernier élémentaire. 
Il existe alors un un sous-espace $\Gamma$-strict 
$Z$ de $X$ purement de dimension $d$
pour un certain $d$, et un morphisme $g\colon Z\to \gma d$ tel que $\Sigma$ soit une partie 
$c$-linéaire par morceaux de $g^{-1}(S_d)$. Comme $f$ est de dimension relative nulle, $f^{-1}(Z)$
est de dimension $\leq d$, et puisque $d_k(x)=d$ pour tout $x\in \Sigma$, on a $d_k(y)=d$ pour tout 
$y\in f^{-1}(\Sigma)$ ; par conséquent $f^{-1}(\Sigma)$ est contenu dans la réunion $T$ des composantes
irréductibles de dimension $d$ de $f^{-1}(Z)$ (disons qu'on munit $T$ de sa structure réduite). 
En particulier, $f^{-1}(\Sigma)$ est contenu dans le $c$-squelette $(g\circ (f_{|T}))^{-1}(S_d)$, 
et comme
l'application $(g\circ (f_{|T}))^{-1}(S_d)\to g^{-1}(S_d)$ est $c$-linéaire par morceaux, 
$f^{-1}(\Sigma)$ est une partie $c$-linéaire par morceaux de  $(g\circ (f_{|T}))^{-1}(S_d)$, et est en particulier
un $c$-squelette élémentaire de $Y$. 
\end{proof}

Nous allons maintenant nous intéresser au comportement des $c$-squelettes
classiques sous image directe
par un morphisme topologiquement propre, en commençant par traiter deux cas particuliers.

\begin{prop}\label{prop-image-plat}
Soient $n$ et $m$ deux entiers
avec $n\geq m$,
et soient $X$ et $Y$ deux espaces $k$-analytiques
$\Gamma$-stricts, purement de dimension $m$ et $n$ respectivement. Soit $f\colon Y\to X$
un morphisme 
purement
de dimension relative $n-m$
et soit $\Sigma$ un $c$-squelette classique
de $Y$
tel que $d_k(y)=n$ pour tout $y\in \Sigma$.
On suppose que $f_{|\Sigma}$ est topologiquement propre. 
L'image $f(\Sigma)$ est alors
un $c$-squelette classique de $X$
et $d_k(x)=m$ pour tout $x\in f(\Sigma)$. 

\end{prop}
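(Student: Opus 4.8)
The plan is to carry out the \emph{Premier cas} of the Introduction: pass to the friable (= $\Gamma$-adic) spaces $\Sigma\gad$ and $X\gad$, where the pieces produced point by point become a genuine \emph{open} cover by compactness — something that fails on $\Sigma$ itself.

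\emph{Reductions, and the statement about $d_k$.} The assertion is G-local on $X$: the formation of $f(\Sigma)$ commutes with restriction along a G-recouvrement of $X$, being a $c$-squelette G-localement élémentaire is a G-local property, and replacing $X$ by a domaine $V$ and $\Sigma$ by $\Sigma\cap f^{-1}(V)$ preserves all the hypotheses (in particular $f|_{\Sigma\cap f^{-1}(V)}$ is still topologiquement propre, being a changement de base de $f|_\Sigma$). So I may assume $X$ affinoïde. Then $\Sigma=(f|_\Sigma)^{-1}(X)$ is quasi-compact, hence its espace friable $\Sigma\gad$ is compact. The equality $d_k(x)=m$ for $x\in f(\Sigma)$ is then forced: for $t\in\Sigma$, additivity of the degré de transcendance along $\widetilde k\subset\hrt{f(t)}\subset\hrt t$ gives $d_k(t)=d_k(f(t))+d_{\hr{f(t)}}(t)$; since $d_k(t)=n$, $d_k(f(t))\le\dim X=m$ and $d_{\hr{f(t)}}(t)\le\dim_t f=n-m$, all three inequalities are equalities, whence $d_k(f(t))=m$.

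\emph{Local bases d'Abhyankar at an adic point.} Fix $x\in\Sigma\gad$, put $\xi=x^\#\in\Sigma$ and $\sigma=f(\xi)$. Applying Proposition~\ref{squelad-abhyankar} G-localement (near $\xi$, $\Sigma$ is une partie $c$-linéaire par morceaux d'un $g^{-1}(S_n)$), the colored valued field $\hr x$ is d'Abhyankar de rang $n$ sur $k$; the adic Abhyankar theory together with the tower argument above then gives that $\hr{f\gad(x)}$ is d'Abhyankar de rang $m$ sur $k$ and $\hr x$ d'Abhyankar de rang $n-m$ sur $\hr{f\gad(x)}$. I would choose fonctions analytiques inversibles $h_1,\ldots,h_m$ sur un voisinage affinoïde $V$ de $\sigma$ dont les valeurs en $f\gad(x)$ forment une base d'Abhyankar de $\hr{f\gad(x)}$ sur $k$, and inversible functions $h_{m+1},\ldots,h_n$ sur un voisinage affinoïde $W\subset f^{-1}(V)$ de $\xi$, assez petit pour que $\Sigma\cap W$ soit fermée dans $W$, telles que, après image réciproque de $h_1,\ldots,h_m$ par $f$, la famille $h_1(x),\ldots,h_n(x)$ soit une base d'Abhyankar de $\hr x$ sur $k$ (concaténation de bases d'Abhyankar dans une tour, \ref{abhyankar}, \ref{degtrans-corpoides}). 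Posons $h=(h_1,\ldots,h_n)\colon W\to\gma n$ and $h'=(h_1,\ldots,h_m)\colon V\to\gma m$; par la remarque~\ref{remarque-sigma-ad}, $x\in h^{-1}(S_n\gad)$ et $f\gad(x)\in(h')^{-1}(S_m\gad)$.

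\emph{The pieces are voisinages and map into un squelette élémentaire.} Posons $\Xi_x=\Sigma\cap W\cap h^{-1}(S_n)\subset Y$ and $\Lambda_x=(h')^{-1}(S_m)$, a $c$-squelette élémentaire de $X$. Now $\Xi_x$ is the intersection of the $c$-squelette G-localement élémentaire $\Sigma\cap W$ de $W$ avec le $c$-squelette élémentaire $h^{-1}(S_n)$, hence a $c$-squelette; it is fermée dans $\Sigma\cap W$ (car $S_n$ est fermé dans $\gma n$), hence localement fermée dans $\Sigma$, so by the critère suivant la Définition~\ref{def-squelette} (cf. Lemme~\ref{c-lin-dom}) it is une partie $c$-linéaire par morceaux — un domaine — de $\Sigma$. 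Consequently $\Xi_x\gad$ is un \emph{ouvert} de $\Sigma\gad$ (l'espace friable d'un domaine s'identifie à un ouvert, cf. \S\ref{friable-general}), it is compact ($\Xi_x$ étant fermée dans l'affinoïde $W$, donc quasi-compacte), and it contains $x$. Moreover $f(\Xi_x)\subset\Lambda_x$: pour $y\in\Xi_x$, la famille $h_1(y),\ldots,h_m(y)$ est algébriquement indépendante sur $\widetilde k$ and is pulled back de $V$, donc $f(y)\in(h')^{-1}(S_m)$. Comme $f(\Xi_x)$ est compacte, I pick une partie $c$-linéaire par morceaux compacte $\Lambda'_x\subset\Lambda_x$ contenant $f(\Xi_x)$; elle est encore élémentaire et, étant compacte, fermée dans $X$.

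\emph{Finitude et conclusion.} Comme $\Sigma\gad$ est compact et recouvert par les ouverts $\Xi_x\gad$, un nombre fini suffit : $\Sigma\gad=\bigcup_{j=1}^N\Xi_{x_j}\gad$ ; en intersectant avec $\Sigma$ et en utilisant $\Xi_{x_j}\gad\cap\Sigma=\Xi_{x_j}$ (\ref{breve-locferm}) on obtient $\Sigma=\bigcup_j\Xi_{x_j}$, d'où $f(\Sigma)=\bigcup_j f(\Xi_{x_j})\subset\bigcup_j\Lambda'_{x_j}=:\Lambda$. Étant une réunion finie de $c$-squelettes élémentaires \emph{fermés}, $\Lambda$ est G-recouvert par ceux-ci, donc est un $c$-squelette G-localement élémentaire de $X$ (Proposition~\ref{gunion-squelettes}), et il est compact. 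L'application induite $\Sigma\to\Lambda$ est $c$-linéaire par morceaux (\ref{fonctor-csquel}) et propre (une application continue d'un espace quasi-compact vers un espace séparé est propre, et $\Lambda\subset X$ est séparé), donc par le Lemme~\ref{image-pl-propre}(1) son image $f(\Sigma)$ est une partie $c$-linéaire par morceaux de $\Lambda$ ; une partie $c$-linéaire par morceaux d'un $c$-squelette G-localement élémentaire en étant encore un, $f(\Sigma)$ est un $c$-squelette G-localement élémentaire de $X$ (propriété (b) de l'Introduction). Joint au premier point, ceci prouve l'énoncé.

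\emph{Principal obstacle.} Le cœur de la preuve est la construction locale aux points \emph{adiques} $x\in\Sigma\gad$ et le fait que $\Xi_x\gad$ soit un \emph{voisinage} de $x$ dans $\Sigma\gad$ — sur $\Sigma$ même on ne peut espérer que $\Xi_x$ (qui pourrait être réduit à $\{\xi\}$) en soit un. Cela est ramené ci-dessus à «$\Xi_x$ est un domaine de $\Sigma$», dont le contenu réel réside dans l'incarnation adique des squelettes (Proposition~\ref{squelad-abhyankar}) : la condition d'être une base d'Abhyankar de rang maximal devient ouverte dès qu'on travaille sur l'espace friable. La comptabilité en degrés de transcendance qui l'accompagne (additivité de $d_k$, décomposition en tour, réalisation d'une base d'Abhyankar par des valeurs de fonctions analytiques inversibles) est technique mais sans surprise.
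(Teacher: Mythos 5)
Votre preuve est correcte et suit pour l'essentiel la démarche du texte : passage à l'espace friable $\Sigma\gad$, construction ponctuelle d'une base d'Abhyankar scindée entre la base et la fibre via la proposition~\ref{squelad-abhyankar} et l'additivité des degrés de transcendance, extraction d'un recouvrement fini par compacité de $\Sigma\gad$ (les morceaux adiques étant des ouverts compacts), puis conclusion par le lemme~\ref{squel-elem-gloc} et la $c$-linéarité automatique de $\Sigma\to\bigcup\Lambda'_{x_j}$. Les seules divergences sont cosmétiques : vous obtenez $d_k(f(y))=m$ directement par la tour de degrés de transcendance au lieu de passer par la réduction au cas plat, et vous justifiez que $\Xi_x$ est un domaine de $\Sigma$ en invoquant l'intersection de squelettes là où le texte forme explicitement la réunion $R=\Sigma'\cup\Upsilon$ dont $\Sigma'$ et $\Upsilon$ sont des domaines.
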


\begin{proof}
L'assertion est G-locale sur $X$, ce qui
permet de supposer $X$ compact. Comme
$\Sigma\to X$ est propre, 
$\Sigma$ est quasi-compact.  Le $c$-squelette
$\Sigma$ possède un G-recouvrement 
$(\Sigma_i)_{i\in I}$ où chaque $\Sigma_i$ est une partie
compacte de $\Sigma$ de la forme $Y_i\cap \Sigma$
pour un certain domaine analytique $\Gamma$-strict et compact $Y_i$ de $Y$. 
Par quasi-compacité
de $\Sigma$ on peut supposer $I$ fini et il suffit alors de démontrer que 
$f(\Sigma_i)$ est pour tout $i$ un $c$-squelette
classique de $X$ dont tout point
$x$ satisfait l'égalité $d_k(x)=m$. On peut donc
supposer que $Y$ et $\Sigma$
sont compacts.

L'assertion à prouver est insensible aux 
phénomènes de nilpotence, ce qui permet de supposer
$X$ et $Y$ réduits. 
Pour tout $y\in Y$ tel que $d_k(y)=n$
on a $d_k(f(y))=m$ d'après le lemme 
7.1 de \cite{ducros2021a}, si bien que 
$f$ est plat en $y$ par le théorème 10.3.7
de \cite{ducros2018} (on utilise ici le fait que $X$
est réduit) ; ceci vaut
en particulier pour tout $y\in \Sigma$. 
Le lieu de platitude
de $f$ est un ouvert de Zariski de $Y$ qui contient
par ce qui précède 
le $c$-squelette compact $\Sigma$. 
Quitte à remplacer $Y$ par un voisinage analytique compact et 
$\Gamma$-strict de $\Sigma$, on peut
donc supposer $f$ plat. 
Le $c$-squelette $\Sigma$ possède par compacité un recouvrement fini
par des $c$-squelettes élémentaires, et il suffit de prouver que l'image de chacun d'eux
sur $X$ est un $c$-squelette classique (on a déjà signalé
que $d_k(x)=m$ pour tout $x\in f(\Sigma)$). 
Ceci permet de supposer que $\Sigma$ est élémentaire. Il existe donc un domaine
analytique $\Gamma$-strict $V$ de $Y$, un sous-espace analytique fermé
$Z$ de $V$ (qu'on peut supposer réduit) purement de dimension $n$ et un morphisme
$g\colon Z\to \gma n$  tel que $\Sigma\subset
g^{-1}(S_d)$. Comme $Z$ est purement de dimension $n$, c'est une union de composantes irréductibles
de $V$. Puisque $d_k(y)=n$
pour tout $y\in \Sigma$, chaque point de $\Sigma$ n'est situé que sur une composante irréductible de $V$. 
Il existe donc un domaine analytique compact et $\Gamma$-strict $V'$ de $V$ contenu dans $Z$ et contenant
$\Sigma$ ; en remplaçant $Y$ par $V'$ on se ramène ainsi au cas où $Z=V=Y$, c'est-à-dire au cas où
il existe $g\colon Y \to \gma n$ tel que $\Sigma\subset
g^{-1}(S_n)$. 

Soit $y$ un point du $c$-squelette $\Gamma$-adique
$\Sigma\gad$ et soit $x$ son image sur $X\gad$ ; le point $y^\#$
appartient à $\Sigma$, et l'on a donc $d_k(x^\#)=m$. 
La proposition \ref{squelad-abhyankar} assure que la valuation 
$\Gamma$-colorée et bridée $y$ est d'Abhyankar de rang $n$. 
Celle-ci est induite par une valuation d'Abhyankar $\eta$ de rang $n$ sur le  corpoïde
$\hrt{y^\#}$, qui est lui-même de degré de transcendance $n$ sur $\widetilde k$. 
La valuation $x$ est induite par une valuation
$\xi$ sur le $\rpos$-corpoïde
$\hrt{x^\#}$, qui est lui-même de degré de transcendance $m$ sur $\widetilde k$. 
Le degré de transcendance résiduel gradué total de $\eta$ relativement à $\xi$ est dès lors majoré
par $n-m$, ce qui implique que le degré de transcendance résiduel de $\xi$ est lui-même minoré
par $m$, et il est finalement égal à $m$ par inégalité d'Abhyankar. Par conséquent, $\xi$ est d'Abhyankar de
rang $m$ et il en va dès lors de même de $x$. 
Il existe dès lors un voisinage affinoïde $\Gamma$-strict $X'$ de $x^\#$
dans $X$ et des fonctions analytiques
inversibles $h_1,\ldots, h_m$ sur $X'$, ainsi qu'un voisinage affinoïde $\Gamma$-strict $Y'$ de $y^\#$
et des fonctions analytiques inversibles $h_{m+1},\ldots, h_n$ sur $Y'$, tels que $h_1(x),\ldots, h_m(x)$
soit une base d'Abhyankar de $x$, et $h_1(y),\ldots, h_n(y)$ soit une base d'Abhyankar de $y$. Soient 
$\Tau$
l'image réciproque de $S_m$ sur $X'$ par $(h_1,\ldots, h_m)$ et $\Upsilon$ l'image réciproque 
de $S_n$ sur $Y'$ par $(h_1,\ldots, h_n)$. Posons $\Sigma'=\Sigma\cap Y'$. La réunion
$R=\Sigma'\cup \Upsilon$ 
est un $c$-squelette de $Y'$, et $(\Sigma')\gad$ et $\Upsilon\gad$ sont deux ouverts compacts de 
$R\gad$. Par conséquent $\Upsilon\gad \cap (\Sigma')\gad$ est un compact ouvert de $(\Sigma')\ad$,
et \textit{a fortiori} de $\Sigma\gad$. Par construction ce compact ouvert contient $y$, et son image sur $X\gad$
est contenue dans $\Tau\gad$. 

Par ce qui précède et par compacité de $\Sigma\gad$ il existe une famille finie $(\Sigma_i)$ de parties $c$-linéaires
par morceaux compactes de $\Sigma$ et, pour tout $i$, un $c$-squelette  élémentaire $\Tau_i$
de $X$ tels que les $\Sigma_i\gad$
recouvrent $\Sigma\gad$ et tels que l'image de $\Sigma_i\gad$ sur 
$X\gad$ soit contenue dans $\Tau_i\gad$ pour tout $i$. Mais alors les $\Sigma_i$ recouvrent $\Sigma$, 
et $f(\Sigma_i)\subset \Tau_i$ pour tout $i$, si bien que $f(\Sigma)$ est contenu dans la réunion des 
$\Tau_i$, qui est un $c$-squelette G-localement élémentaire de $X$ en vertu du lemme
\ref{squel-elem-gloc}. L'application de $\Sigma$ dans $\bigcup \Tau_i$ est automatiquement $c$-linéaire
par morceaux d'après \ref{fonctor-csquel}, et $f(\Sigma)$ est donc bien un $c$-squelette G-localement élémentaire
de $X$. 
\end{proof}

\begin{prop}\label{imsquel-qet}
Soit $f\colon Y\to X$ un morphisme quasi-étale
entre espaces $k$-analytiques $\Gamma$-stricts,
et soit $\Sigma$ un $c$-squelette classique de $Y$. Supposons que
$f_{|\Sigma}$ est topologiquement propre. L'image 
$f(\Sigma)$ est un $c$-squelette de $X$.
\end{prop}

\begin{proof}
La preuve comprend plusieurs étapes. 

\paragraph{Réduction au cas fini galoisien}
L'assertion 
est G-locale sur $X$, ce qui permet de le supposer affinoïde. 
Le $c$-squelette $\Sigma$ est alors quasi-compact, donc est réunion finie
de $c$-squelettes $\Sigma_i$ compacts, chaque
$\Sigma_i$ étant contenu dans un domaine analytique compact et $\Gamma$-strict
$Y_i$ de $Y$. Quitte à remplacer $Y$ par $\coprod Y_i$ et $\Sigma$ par $\coprod \Sigma_i$
(ce qui ne modifie pas $f(\Sigma)$), on peut supposer que $Y$ et $\Sigma$ sont compacts. 

Soit $x$ un point de $X$. 
Puisque $f$ est quasi-étale et puisque $Y$ est compact, il existe un voisinage affinoïde
connexe $\Gamma$-strict $V$ de $x$ dans $X$ et un recouvrement fini de $f^{-1}(V)$ par une famille
$V_i$ de domaines affinoïdes $\Gamma$-stricts de $Y$ tels que chacun des $V_i$ s'identifie, 
comme espace $V$-analytique, à un domaine affinoïde d'un $V$-espace fini étale connexe $W_i$. 
Soit $W\to V$ un revêtement fini galoisien connexe déployant tous les $W_i$. Chacun des 
$V_i\times_X W$ s'identifie alors comme
$W$-espace à une union disjointe $\coprod W_{ij}$ de domaines affinoïdes $\Gamma$-stricts 
de $W$. Pour tout $(i,j)$, l'image réciproque $\Sigma_{ij}$
de $\Sigma\cap V_i$ sur $W_{ij}$ en est un $c$-squelette
classique par la proposition \ref{recap-squelettes-classiques} (7), 
et il suffit de démontrer que l'image sur $V$ de la réunion des $\Sigma_{ij}$ 
(chacun étant vu comme un $c$-squelette de $W_i\subset W$) 
est un $c$-squelette de 
$V$. En remarquant que
la réunion des $\Sigma_{ij}$
est elle-même un $c$-squelette
classique de $W$ en vertu
de la proposition  la proposition \ref{recap-squelettes-classiques} 
(1),
on voit qu'on peut finalement
remplacer $X$ par $V$,
$Y$ par $W$ et $\Sigma$ par $\bigcup \Sigma_{ij}$,
et par là se ramener au
cas où $X$ et $Y$ sont connexes et
où $Y\to X$ est un revêtement fini galoisien.
Soit $G$ le groupe de Galois de $Y$ sur $X$. 
Pour tout $g$, l'image $g(\Sigma)$ est un $c$-squelette
classique fermé de $Y$, et
$\bigcup_{g\in G}g(\Sigma)$ est dès lors lui-même
un $c$-squelette classique de $Y$
d'après la proposition 
\ref{recap-squelettes-classiques} 
(1). 
On peut donc remplacer
$\Sigma$ par $\bigcup_{g\in G}g(\Sigma)$ (ça ne change pas son
image sur $X$) et ainsi le supposer stable sous l'action
de Galois. Le groupe $G$ opère alors
sur $\Sigma$ par automorphismes
$c$-linéaires par morceaux.

\paragraph{} Le théorème
\ref{quotient-fini-pl}
assure l'existence d'un recouvrement fini $(\Tau_\ell)$ de $\Sigma$
tel que $f_{|\Tau_\ell}$ soit injectif pour tout $\ell$. Quitte à raffiner
$(\Tau_\ell)$, on peut supposer que pour tout $\ell$ il existe un domaine
analytique compact et $\Gamma$-strict $Y_\ell$ de $Y$
contenant $\Tau_\ell$ et un morphisme
$h_\ell \colon Y_\ell \to \gma {n_\ell}$ pour un certain $n_\ell$ tel que 
$\abs{h_\ell}$ induise un isomorphisme $c$-linéaire par morceaux entre $\Tau_\ell$
et un $c$-polytope de $\rposn {n_\ell}$. Notons $h_{\ell 1},\ldots, h_{\ell n_\ell}$
les fonctions inversibles composantes du morphisme $h_\ell$.

Fixons $\ell$. Soit $y\in Y_\ell\gad$ et soit $x$ son image sur $X$. Le corps 
$\hr {y^\#}$ est une extension finie galoisienne de $\hr {x^\#}$ de degré 
divisant le cardinal $m$ de $G$  ;  par conséquent, $\abs{\hr y^\times}/\abs{\hr x^\times}$ divise
$m$. Il existe donc un domaine analytique compact et $\Gamma$-strict $U$
de $X$ tel que $x\in U\gad $ et $n_\ell$ fonctions analytiques inversibles 
$\lambda_1,\ldots, \lambda_{n_\ell}$ sur $U$ telles que 
$\abs{h_{\ell i}^m(y)}=\abs{\lambda_i(y)}$ pour tout $i$. Soit
$\Omega$ le lieu de validité simultanée 
sur $Y_\ell \times_XU$ des égalités $\abs{h_{\ell i}^m}
=\abs{\lambda_i}$. C'est un domaine analytique compact et $\Gamma$-strict 
de $Y_\ell$, et $\Omega\gad $ contient $y$ par construction. 
Par ce qui précède et par
compacité de $Y_\ell\gad$ il existe un recouvrement fini 
$(\Omega_s)_s$ de $Y_\ell$ par des domaines analytiques compacts et $\Gamma$-stricts
et, pour tout $s$, une famille $(\lambda_{s1},\ldots, \lambda_{sn_\ell})$ de fonctions analytiques
inversibles sur $f(\Omega_s)$ tels que $\abs{h_{\ell i}^m}=\abs{\lambda_{si}}$ sur
$\Omega_s$ pour tout $s$. Les $\abs{\lambda_{si}}$ induisent alors un isomorphisme
$c$-linéaire par morceaux entre $\Tau_\ell\cap \Omega_s$ et un $c$-polytope de $\rposn {n_\ell}$. 

Quitte à raffiner le recouvrement $(\Tau_\ell)$ on peut donc supposer que chacune des $h_{\ell i}$
provient d'une fonction analytique inversible sur $f(Y_\ell)$, encore notée $h_{\ell i}$.

Le compact $\bigcup_g g(\Tau_\ell)$ est une partie $c$-linéaire par
morceaux du $c$-squelette $\Sigma$, et est donc de la forme
$Y'\cap \Sigma$ pour un certain domaine analytique compact et $\Gamma$-strict
$Y'$ de $Y$. L'image $X'$ de $Y'$ sur $X$
est un domaine analytique compact et $\Gamma$-strict de
$X$. On a alors $\bigcup_gg( \Tau_\ell)=\Sigma\cap\bigcup_g g(Y')$, si bien que quitte
à remplacer $Y'$ par $\bigcup_{g\in G}g(Y')$ on peut supposer
que $Y'=f^{-1}(X')$. Comme on a par ailleurs 
$\bigcup_g g(\Tau_\ell)=f^{-1}(f(\Tau_\ell))$ et $\Sigma=f^{-1}(f(\Sigma))$, 
il vient
$f(\Tau_\ell)=X'\cap f(\Sigma)$. 
Autrement dit, lorsqu'on munit $f(\Sigma)$ de sa structure domaniale
induite par la structure analytique $\Gamma$-stricte de $X$, le compact $f(\Tau_\ell)$
est un domaine de $f(\Sigma)$.

\paragraph{}
Pour montrer que $f(\Sigma)$ est un $c$-squelette
de $X$, il suffit par ce qui précède 
de montrer que chacun des $f(\Tau_\ell)$ en est un. Fixons donc $\ell$
et écrivons $\Tau, n$ et $h$ au lieu de $\Tau_\ell, n_\ell$ et $h_\ell$. 
Et désignons désormais par $h_1,\ldots, h_n$ les composantes de $h$. 
Les $h_i$ sont des fonctions inversibles définies sur un domaine
analytique $\Gamma$-strict de $X$ contenant $f(\Tau)$. 

Il résulte alors du fait que $\Tau$ est un $c$-squelette de $Y$ 
et du choix des $h_i$ que les conditions du lemme
\ref{critere-squelette} sont satisfaites, avec $f(\Tau)$ dans le rôle de
$\Sigma$, avec $\mathscr O_X(f(\Tau))^\times$ dans celui de $E$, 
et $(h_1,\ldots, h_n)$ dans celui de $(f_1,\ldots, f_n)$. Par conséquent, 
$f(\Tau)$ est un $c$-squelette de $X$. 
\end{proof}

Nous allons maintenant en venir au théorème principal de cet article. 

\begin{theo}\label{image-squelette-glocalelem}
Soit $f\colon Y\to X$ un morphisme 
entre espaces $k$-analytiques $\Gamma$-stricts,
et soit $\Sigma$ un $c$-squelette classique 
de $Y$. Supposons que
$f_{|\Sigma}$ est topologiquement propre. 
\begin{enumerate}[1]
\item L'image 
$f(\Sigma)$ est un $c$-squelette de $X$.

\item Il existe un G-recouvrement $(\Sigma_i)$ de $f(\Sigma)$ par des parties 
$c$-linéaires par morceaux compactes et, pour tout $i$, un entier $\delta_i$
tel que $d_k(x)=\delta_i$ pour tout $x\in \Sigma_i$, et une famille décroissante $(V_{ij})_{j\geq 0}$ de 
domaines analytiques compacts et $\Gamma$-stricts de $X$, vides pour $j$ assez grand, 
tels que $\Sigma_i\subset V_{i0}$ et tel que $\Sigma_i\cap (V_{ij}\setminus V_{i,j+1})$
soit pour tout $j$ contenu dans un fermé de Zariski purement de dimension $\delta_i$
de $V_{ij}\setminus V_{i,j+1}$.

\item La fonction $x\mapsto d_k(x)$ de $f(\Sigma)$ dans $\N$ est localement constante. 

\item L'ensemble $\Sigma'$ des 
points $x$ de $f(\Sigma)$ tels que $d_k(x)=\dim_x X$ est un ouvert fermé
de $f(\Sigma)$ et c'est un $c$-squelette classique. 

\end{enumerate}
\end{theo}

\begin{proof}

On peut raisonner G-localement sur $X$, ce qui permet de le supposer compact. 
L'assertion à prouver étant insensible aux phénomènes de nilpotence, on peut supposer $X$ réduit. 
Dans ce cas $\Sigma$ est quasi-compact, si bien qu'il existe une famille finie
$(U_i)$ de domaines analytiques compacts et 
$\Gamma$-stricts de $Y$ et, pour tout $i$, un $c$-squelette élémentaire 
compact $\Sigma_i$ de $U_i$ tel que $\Sigma=\bigcup_i \Sigma_i$. 
Quitte à restreindre les $U_i$ on peut supposer qu'il existe pour tout $i$
un sous-espace analytique 
fermé $Y_i$ de $U_i$, purement de dimension $d_i$ pour un certain $i$, et un morphisme 
$\phi_i\colon Y_i\to \gma{d_i}$ tel que $\Sigma_i\subset \phi_i^{-1}(S_{d_i})$. 
Chacun des $\Sigma_i$ est réunion disjointe de ses intersections
$\Sigma_{ij}$ avec les différentes composantes irréductibles $Y_{ij}$ de $Y_i$. 
On peut alors pour démontrer le théorème remplacer 
$Y$ par $\coprod Y_{ij}$ (chaque $Y_{ij}$ étant muni 
de sa structure réduite) et $\Sigma$ par $\coprod \Sigma_{ij}$. 
On peut ainsi finalement (en renumérotant tout avec un seul indice)
supposer que $Y$ s'écrit comme une somme disjointe 
$\coprod Y_i$ où chaque $Y_i$ est un espace irréductible
et réduit dont on note $d_i$ la dimension et
qui est muni d'un morphisme $\phi_i\colon Y_i\to \gma{d_i}$ et d'une partie
$c$-linéaire par morceaux compacte $\Sigma_i$ de 
$\phi_i^{-1}(S_{d_i})$, et
où $\Sigma=\coprod \Sigma_i$.

Fixons $i$. Notons $e_i$ la dimension 
générique de $f|_{Y_i}$. 
Il résulte du théorème 4.3 de \cite{ducros2026} qu'il existe : 
\begin{itemize}[label=$\diamond$] 
\item un espace $k$-analytique $\Gamma$-strict et compact $Z_i$, muni d'un morphisme $Z_i\to X$ qui est
composé de morphismes quasi-étales et d'éclatements, et est en particulier quasi-étale
en dehors d'un diviseur de Cartier effectif $E_i$ ; 
\item un domaine analytique compact et $\Gamma$-strict $V_i$ de $Y_i\times_X Z_i$, dont on note
$V'_i$ la transformée stricte relative à $E_i$, c'est-à-dire l'adhérence réduite de $V_i \setminus Y_i \times_X E_i$ dans $V_i$, 
\end{itemize}
tels que les propriétés suivantes soient satisfaites : 
\begin{enumerate}[i]
\item $V'_i$ est purement  de dimension $d_i$ ; 
\item $V'_i\to Y_i$ est surjectif et génériquement quasi-étale ; 
\item $V'_i \to Z_i$ se factorise par un morphisme surjectif et plat 
$V'_i\to F_i$, dont les fibres sont purement de dimension $e_i$, où $F_i$ est un sous-espace
analytique fermé de $Z_i$ réduit et purement de dimension $d_i-e_i$, transverse à $E_i$. 
\end{enumerate}
Soit $\Tau$ l'image réciproque de $\Sigma_i$ sur $V_i$. Comme $d_k(y)=d_i$ pour tout
$y\in \Sigma_i$, on a $d_k(v)\geq d_i$ pour tout $v\in \Tau_i$. Combiné au fait
que $V'_i$ est de dimension $d_i$, ceci entraîne que $d_k(v)=d_i$ pour tout $v\in \Tau_i$, 
puis que $\Tau_i$ est contenu dans le lieu quasi-étale $V''_i$ de $V'_i\to Y_i$ (qui 
est Zariski-dense dans $V'_i$). Le morphisme $V''_i\to Y_i$ étant de dimension relative nulle, il résulte 
de la proposition \ref{recap-squelettes-classiques} (7) 
que $\Tau_i$ est un $c$-squelette élémentaire de $V''_i$, et partant de $V'_i$. 
La proposition \ref{prop-image-plat}
assure alors que l'image $\Upsilon_i$ de $\Tau_i$ sur $F_i$ est un $c$-squelette classique
de $F_i$, et que $d_k(t)=d_i-e_i$ pour tout 
$t\in \Upsilon_i$. Comme
$F_i$ est purement de dimension $d_i-e_i$ et est transverse à $E_i$, le $c$-squelette
$\Upsilon_i$ ne rencontre pas $E_i$ ; il existe donc un voisinage analytique compact et $\Gamma$-strict $Z'_i$
de $\Upsilon_i$ dans $Z_i$ qui ne rencontre pas $E_i$, et est dès lors quasi-étale sur $X$. 
L'image $f_i(\Sigma_i)$ est alors égale à l'image de $\Upsilon_i$ sur $X$. 

On déduit de ce qui précède que $\Sigma$ est égale à l'image de $\coprod \Upsilon_i$ par le morphisme 
quasi-étale $\coprod Z'_i\to X$. En vertu du théorème \ref{imsquel-qet}, cette image
est un $c$-squelette de $X$, 
ce qui montre (1). 

Fixons $i$. Par ce qui précède, on a $d_k(x)=d_i-e_i$ pour tout 
$x$ appartenant à l'image de $\Upsilon_i$ par $Z'_i\to X$. 
De plus,  il découle de \cite[théorème 5.3]{ducros2026}
qu'il existe une suite décroissante finie $(X=X_0\supset X_1\supset \ldots \supset X_m=\emptyset)$ 
de domaines analytiques compacts et $\Gamma$-stricts de $X$
telle que l'intersection de l'image de $F_i\cap Z'_i$
sur $X$
avec $X_j\setminus X_{j+1}$ soit pour tout $j$
un fermé de Zariski $G_j$ de $X_j\setminus X_{j+1}$
purement de dimension $d_i-e_i$ ; l'image de $\Upsilon_i$ sur $X$
est alors contenue dans la réunion des $G_j$, ce qui termine de montrer (2). 
L'assertion (3) s'en déduit  aussitôt compte-tenu du fait qu'une application G-localement constante est localement
constante.

Montrons (4). Soit $x\in f(\Sigma)$. Supposons tout d'abord que $x$ est situé sur une seule composante irréductible de $X$. 
Alors $x$ possède un voisinage ouvert $U$ purement de dimension $\dim_x X$, et $\Sigma'\cap U$ est donc l'ensemble
des points $\xi$ de $f(\Sigma)\cap U$ tels que $d_k(\xi)=\dim_x X$, qui d'après (3) est à la fois ouvert et fermé dans
$f(\Sigma)\cap U$. 
Supposons maintenant que $x$ soit situé sur au moins deux composantes irréductibles de $X$. Pour tout composante irréductible
$Z$ de $X$ contenant $x$ le point $x$ est alors situé sur un fermé de Zariski strict de $Z$ (puisqu'il appartient à au moins une autre 
composante irréductible de $X$), 
si bien que $d_k(x)<\dim Z$. Ceci valant pour tout 
$Z$ il résulte de (3) que $d_k(\xi)=d_k(x)<\dim_\xi X$ pour tout $\xi$ suffisamment proche de $x$ sur $f(\Sigma)$ ; le point $x$ possède donc un 
voisinage ouvert dans $f(\Sigma)$ qui ne rencontre pas $\Sigma'$. 

Il découle de ce qui précède que $\Sigma'$ est un ouvert fermé de $f(\Sigma)$ ; montrons maintenant que c'est un $c$-squelette
classique. Chaque point de $\Sigma'$ étant situé sur une unique composante irréductible de $X$, il existe un domaine analytique 
compact et $\Gamma$-strict $X'$ de $X$ tel que $X'\cap \Sigma=\Sigma'$ 
cet tel que $X=\coprod X'_a$ où les $X_a$ sont des domaines analytiques compacts, 
$\Gamma$-stricts et équidimensionnels
de$X$. Il suffit alors de démontrer que $\Sigma'\cap X'_a$ est classique pour tout $a$. 
En remplaçant $X$ par $X_a$ (et chacun des espaces $Y_i, Z_i$, etc. introduits ci-dessus par leur produit fibré avec $X$)
on se ramène au cas où $X$ est purement de dimension $n$ pour un certain $n$, et où $d_k(x)=n$ pour tout $x\in \Sigma$. 
Les espaces $Z_i$ sont alors tous eux aussi purement de dimension $n$. 

Soit $i$ un indice tel que $\Upsilon_i$ soit non vide. On a alors $d_i-e_i=n$, le fermé 
$F_i$ est nécessairement une réunion de composantes irréductibles de $Z_i$, et $d_k(t)=n$
pour tout $t\in \Upsilon_i$. 
Le morphisme de $F_i\cap Z'_i$ vers $X$ est alors un morphisme entre deux 
espaces compact et $\Gamma$-stricts purement de dimension $n$, et ses fibres sont de dimension nulle ; 
la proposition \ref{prop-image-plat} assure alors que $f_i(\Upsilon_i)$ est un $c$-squelette classique. 
En conséquence $\Sigma=\bigcup_i f_i(\Upsilon_i)$ est un $c$-squelette classique. 
\end{proof}

\begin{rema}
Sous les hypothèses du théorème ci-dessus, nous ne pensons pas que
le $c$-squelette $f(\Sigma)$ soit classique en général, même si exhiber
un contre-exemple semble délicat
(notre preuve montre que s'il existe des contre-exemples il en existe nécessairement avec 
$f$ quasi-étale) ; notons toutefois que $f(\Sigma)$ est classique 
d'après l'assertion (4)
lorsque $d_k(x)=\dim_x X$ pour tout $x\in f(\Sigma)$. Mais en général l'assertion (2)
(couplée à l'assertion (4)
que l'on applique aux fermés de Zariski de $V_{ij}\setminus V_{i,j+1}$ que mentionne (2)) assure simplement 
l'existence G-localement d'une \emph{stratification} finie, et non d'un G-recouvrement,
par des $c$-squelettes classiques. 

\end{rema}

\subsection{Les squelettes admissibles d'un espace de Berkovich}
Dans cette dernière partie nous nous proposons de dégager à l'aide des théorèmes qui précèdent une classe raisonnable de
squelettes, contenant les $S_n$ et possédant de bonnes propriétés de stabilité ; la définition suivante précise cette requête
un peu vague. 

\begin{defi}\label{def-classe-adm}
Une
\textit{classe admissible de $c$-squelettes} est la donnée, pour tout espace 
$k$-analytique $\Gamma$-strict $X$, d'un ensemble $\mathscr S(X)$ de $c$-squelettes de $X$, 
la collection des $\mathscr S(X)$ étant assujettie aux conditions suivantes. 

\begin{enumerate}[1]
\item Soit $X$ un espace $k$-analytique $\Gamma$-strict. 
\begin{enumerate}[b]
\item Toute partie de $X$ qui est G-recouverte par des éléments de
$\mathscr S(X)$ appartient à $\mathscr S(X)$. 
\item Si $Z$ est un
sous-espace $\Gamma$-strict de $X$, une partie
de $Z$ appartient à $\mathscr S(Z)$ si et seulement si elle appartient à 
$\mathscr S(X)$. 
\item Si $\Sigma$ est un élément de 
$\mathscr S(X)$, un sous-ensemble $\Tau$
de $\Sigma$ appartient à $\mathscr S(X)$ si et seulement
si c'est une partie $c$-linéaire par morceaux de
$\Sigma$. 
\item L'intersection d'une famille finie d'éléments de 
$\mathscr S(X)$ appartient à $\mathscr S(X)$. 

\end{enumerate}

\item Soit $f\colon Y\to X$ un morphisme d'espaces $k$-analytiques
$\Gamma$-stricts. 
\begin{enumerate}[b]
\item Pour tout élément $\Sigma$ de $\mathscr S(X)$
tel que $f^{-1}(x)$ soit vide ou de dimension nulle quel que soit
$x\in \Sigma$,
l'image réciproque $f^{-1}(\Sigma)$ appartient à $\mathscr S(Y)$. 

\item Pour tout élément $\Tau$ de $\mathscr S(Y)$ tel que 
$f_{|\Tau}$ soit topologiquement propre, l'image $f(\Tau)$
appartient à $\mathscr S(X)$. 
\end{enumerate}

\end{enumerate} 
\end{defi}

\begin{enonce}[remark]{Notation}\label{notation-sacc}
Soit $X$ un espace  $k$-analytique $\Gamma$-strict. Nous noterons 
$\sacc X$ l'ensemble des parties localement fermées $\Sigma$ de $X$
telles qu'il existe : 
\begin{itemize}[$\diamond$]
\item un G-recouvrement $(\Sigma_i)$ de $\Sigma$ par des parties
compactes et localement fermées dans $\Sigma$ (cette dernière condition est automatique
si $\Sigma$ est séparée) ; 
\item pour tout $i$, un espace $k$-analytique $\Gamma$-strict compact $Y_i$, un morphisme 
$f_i$ de $Y_i$ vers un domaine analytique $\Gamma$-strict compact 
$X_i$ de $X$
et un $c$-squelette
élémentaire
compact $\Tau_i$ de $Y_i$ tel 
que
$\Sigma_i=f_i(\Tau_i)$. 
\end{itemize}
\end{enonce}

\begin{theo}\label{theo-accessibles}
La collection des $\sacc X$ pour $X$ variable
possède les propriétés suivantes. 

\begin{enumerate}[a]
\item $\mathscr S^{\mathrm{acc}}$ est la plus petite classe admissible de $c$-squelettes
contenant $S_d$ pour tout $d$. 
\item $\mathscr S^{\mathrm{acc}}$ 
contient tous
les $c$-squelettes classiques. 
\item Soit $X$ un espace $k$-analytique $\Gamma$-strict
et soit $\Sigma$ un élément de $\sacc X$. 
II existe un G-recouvrement $(\Sigma_i)$ de $\Sigma$ par des parties 
$c$-linéaires par morceaux compactes et, pour tout $i$, un entier $\delta_i$
tel que $d_k(x)=\delta_i$ pour tout $x\in \Sigma_i$, et une famille décroissante $(V_{ij})_{j\geq 0}$ de 
domaines analytiques compacts et $\Gamma$-stricts de $X$, vides pour $j$ assez grand, 
tels que $\Sigma_i\subset V_{i0}$ et tel que $\Sigma_i\cap (V_{ij}\setminus V_{i,j+1})$
soit pour tout $j$ contenu dans un fermé de Zariski purement de dimension $\delta_i$
de $V_{ij}\setminus V_{i,j+1}$. 

\item Soit $X$ un espace $k$-analytique $\Gamma$-strict
et soit $\Sigma$ un élément de $\sacc X$. L'application $x\mapsto d_k(x)$ de $\Sigma$ vers $\N$
est localement constante. L'ensemble $\Sigma'$ des 
points $x$ de $\Sigma$ tels que $d_k(x)=\dim_x X$ est un ouvert fermé
de $\Sigma$ et c'est un $c$-squelette classique. 

\end{enumerate}
\end{theo}

\begin{enonce}[remark]{Commentaires}
Nous avons opté pour la notation $\mathscr S^{\mathrm{acc}}$
car nous proposons d'appeler les éléments de 
$\sacc X$ les $c$-squelettes \textit{accessibles}
de $X$. 
\end{enonce}

\begin{proof}
Dans toute la démonstration nous utiliserons les différents axiomes imposés
aux classes admissibles de $c$-squelettes en faisant simplement référence à leur numéro
dans la définition \ref{def-classe-adm}, de (1a) à (2b).

\paragraph{La classe $\mathscr S^{\mathrm{acc}}$ contient les
$S_d$}
Soit $d$ un entier. Pour tout $R>1$ dans 
$\Gamma\cdot \abs{k^\times}$, soit $V_R$ le domaine
affinoïde $\Gamma$-strict de $\gma d$ défini par les inégalités
$R^{-1}\leq \abs{T_i}\leq R$ pour $1\leq i\leq d$, et soit
$\Sigma_R$ l'intersection de $S_d$ avec $V_R$. Par définition, 
$\Sigma_R$ est pour tout $R$ un $c$-squelette élémentaire compact 
de $V_R$ ; le G-recouvrement de $S_d$ par les
$\Sigma_R$ atteste alors que $S_d\in \sacc{\gma d}$. 
\paragraph{La minimalité}
Soit 
$\mathscr S$ une classe admissible de $c$-squelettes contenant $S_d$ pour tout $d$. 
Soit $X$ un espace $k$-analytique, soit
$Y$ un sous-espace $\Gamma$-strict de $X$ purement de dimension $d$ et soit $f$ un morphisme de $Y$ vers
$\gma d$. Si $x$ appartient à $S_d$ alors $d_k(x)=d$, ce qui force
la fibre $f^{-1}(x)$ à être
vide ou de dimension nulle puisque
l'on a 
$d\geq d_k(y)=d_{\hr x}(y)+d$ pour tout $y\in f^{-1}(x)$. 
Au vu de l'axiome (2a), $f^{-1}(S_d)$ appartient à 
$\mathscr S(Y)$, puis à $\mathscr S(X)$ au vu de l'axiome
(1b). 
Il s'ensuit au vu de (1c) que
$\mathscr S(X)$ contient les 
$c$-squelettes élémentaires de $X$, et partant les
$c$-squelettes classiques
de $X$
en vertu de (1a). 

Si $f$ est un morphisme d'un espace $k$-analytique $\Gamma$-strict compact 
$Y$ vers un domaine analytique
$\Gamma$-strict compact $W$ de $X$ et si $\Sigma$ est un $c$-squelette élémentaire
compact de $Y$ alors
$\Sigma$ appartient à 
$\mathscr S(Y)$ par ce qui précède, 
et $f(\Sigma)$ appartient donc
à $\mathscr S(W)$ par l'axiome (2b),
puis à $\mathscr S(X)$ en vertu de (1b). Il
découle alors de (1a) que tout élément de
$\sacc X$ appartient à $\mathscr S(X)$. 

Il suffit donc maintenant de montrer que $\mathscr S^{\mathrm{acc}}$ est une classe admissible
de $c$-squelettes satisfaisant (c) et (d);
d'après ce qu'on vient de voir ce sera alors
la plus petite classe admissible
de $c$-squelettes contenant les $S_d$, 
et elle contiendra de surcroît tous les $c$-squelettes classiques.

\paragraph{Les éléments de $\sacc X$ sont des $c$-squelettes
satisfaisant (c) et (d)}
Soit $X$ un  espace $k$-analytique $\Gamma$-strict
et soit
$\Sigma$
que un élément de $\sacc X$. Montrons $\Sigma$ est un $c$-squelette
satisfaisant (c) et (d). 
On peut raisonner G-localement sur $X$, 
ce qui permet de supposer que $X$ et $\Sigma$ sont compacts. Il existe alors
une famille
finie $(Y_i)$ d'espaces analytiques compacts et 
$\Gamma$-stricts et, pour tout $i$, un $c$-squelette
élémentaire $\Tau_i$ de $Y_i$
et un morphisme $f_i\colon Y_i\to X$ tels que $\Sigma=\bigcup_i f_i(\Tau_i)$. 
Mais alors $\Sigma$ est l'image du $c$-squelette G-localement élémentaire 
compact 
$\coprod \Tau_i$ de $\coprod Y_i$ par le morphisme naturel 
de $\coprod Y_i$ vers $X$, et il résulte du théorème \ref{image-squelette-glocalelem}
que $\Sigma$ est un 
$c$-squelette 
satisfaisant (c) et (d).

\paragraph{Vérification de (1a)}
Soit $X$ un espace $k$-analytique $\Gamma$-strict et soit $\Sigma$ une 
partie localement fermée de $X$. Supposons que $\Sigma$ possède un G-recouvrement par 
des parties appartenant à $\sacc X$. Il découle alors tautologiquement de la définition de $\sacc X$
que $\Sigma$ appartient à $\sacc X$.

\paragraph{Vérification de (1b)}
Soit $X$ un espace $k$-analytique
et soit $Z$ un sous-espace
$\Gamma$-strict de $X$. Soit $\Sigma$
une partie de $Z$. Supposons
que $\Sigma$ appartient à $\sacc Z$
(resp. $\sacc X$). 
Il existe alors 
un G-recouvrement $(\Sigma_i)$ 
de $\Sigma$ par des parties 
compactes et localement fermées et, pour tout $i$, 
un espace $k$-analytique compact et $\Gamma$-strict 
$Y_i$, un $c$-squelette élémentaire compact $\Tau_i$
de $Y_i$ et un morphisme $f_i$ de $Y_i$ vers un domaine
analytique compact et $\Gamma$-strict $Z_i$
(resp. $X_i$) de $Z$ (resp. $X$) tel que $\Sigma_i
=f_i(\Tau_i)$. 

Dans le premier cas, on choisit
pour tout $i$ un recouvrement
fini $(Y_{ij})$ de chacun des $Y_i$ par des domaines analytiques
compacts et $\Gamma$-stricts tels que
$f_i(Y_{ij})$ soit contenu pour tout $(i,j)$ dans un domaine
analytique compact et $\Gamma$-strict
$X_{ij}$ de $X$. Le G-recouvrement
$(f_i(\Tau_i\cap Y_{ij}))_{i,j}$ de $\Sigma$
atteste alors que 
ce dernier appartient à 
$\sacc X$. 

Dans le second cas, on pose 
$Y'_i=f_i^{-1}(Z)$
pour tout indice $i$. 
Le sous-ensemble
$\Tau_i$ de $Y_i$ est alors un $c$-squelette
élémentaire de $Y'_i$, et $f_i(\Tau_i)$ est contenu
dans le domaine analytique compact et $\Gamma$-strict
$Z\times_X X_i$ de $Z$. Le G-recouvrement de 
$\Sigma$ par les $\Sigma_i$ atteste alors que 
$\Sigma\in \sacc Z$.

\paragraph{Vérification de (1c)}
Soit $X$ un espace
$k$-analytique $\Gamma$-strict, soit $\Sigma$
appartenant à $\sacc X$ et soit $\Tau$
une partie de $\Sigma$. Si $\Tau$ est un élément
de $\sacc X$, c'est un $c$-squelette de $X$ d'après 
ce qu'on vient de voir, et donc une partie $c$-linéaire par morceaux de $\Sigma$. 
Réciproquement, supposons que $\Tau$ soit une partie
$c$-linéaire par morceaux de $\Sigma$. Pour montrer que
$\Tau$ appartient à $\sacc X$, on peut par (1a) 
(déjà établi) raisonner
G-localement sur $\Tau$, ce qui permet
de supposer que $X,\Tau$ et $\Sigma$ sont compacts, 
et que $\Sigma$ est égal à $f(\Upsilon)$ où $f$ est un morphisme
d'un espace analytique compact et $\Gamma$-strict
$Y$ vers $X$ et où 
$\Upsilon$ est un $c$-squelette
élémentaire compact de $Y$. Mais alors 
$\Tau=f(f^{-1}(\Tau))$ ; et
comme $f^{-1}(\Tau)$ est une partie
$c$-linéaire par morceaux fermée de $\Upsilon$, c'est un
$c$-squelette élémentaire de $Y$, si bien 
que $\Tau$ est un élément de $\sacc X$.

\paragraph{Vérification de (1d)}
Soit $X$ un espace $k$-analytique compact
et soit $(\Sigma_i)$ une famille finie d'éléments de
$\sacc X$. Par définition tous les $\Sigma_i$
sont localement fermés ; choisissons pour tout $i$
un ouvert $U_i$ de $X$ dont $\Sigma_i$ est un fermé. 
Soit $U$ l'intersection des $U_i$. 
Chacun des $\Sigma_i\cap U$
est une partie $c$-linéaire par morceaux
de $\Sigma_i$, donc appartient à $\sacc X$ d'après (1c), 
puis à $\sacc U$ d'après (1b).
En vertu de (1a) la réunion $\Tau$ des $\Sigma_i\cap U$
appartient encore à $\sacc U$, puis à $\sacc X$
d'après (1b). 
Chacun des $\Sigma_i\cap U$ est
en vertu de (1c)
une partie $c$-linéaire par morceau de $\Tau$ et il en va donc
de même de leur intersection, qui n'est autre que
$\bigcap_i \Sigma_i$. En utilisant à nouveau (1c)
on en déduit que $\bigcap_i \Sigma_i$ est un
élément de $\sacc X$.

\paragraph{Vérification de (2a)}
Soit $f\colon Y
\to X$ un morphisme 
entre deux espaces 
$k$-analytiques $\Gamma$-stricts et soit $\Sigma$ un élément de $\sacc X$
tel que $f^{-1}(x)$ soit vide ou de dimension nulle pour tout $x\in \Sigma$. Nous
allons montrer que $f^{-1}(\Sigma)$ 
appartient à $\sacc Y$. 
D'après (1a) l'assertion est G-locale sur $f^{-1}(\Sigma)$, ce qui permet de supposer
$Y$ compact ; elle est \textit{a fortiori} G-locale sur 
$\Sigma$, 
ce qui permet de supposer que $X$ est compact 
et qu'il existe un espace $k$-analytique $\Gamma$-strict compact 
$Z$, un morphisme 
$g\colon Z\to X$ et un $c$-squelette élémentaire
$\Tau$ de $Z$ tel que $\Sigma=g(\Tau)$. 
Soit $\Upsilon$ l'image réciproque de
$\Tau$ sur $Y\times_XZ$. C'est
d'après la proposition \ref{recap-squelettes-classiques}
(7)
un $c$-squelette 
élémentaire de $Y\times_X Z$, 
et son image sur $Y$ est égale à $f^{-1}(\Sigma)$. 
Comme $Y\times_XZ$ est compact, $f^{-1}(\Sigma)$ appartient
à $\sacc Y$.

\paragraph{Vérification de (2b)}
Soit $f\colon Y
\to X$ un morphisme entre espaces $k$-analytiques 
$\Gamma$-stricts, 
et soit $\Tau$ un élément de $\sacc Y$ 
tel que $f_{|\Tau}$ soit topologiquement propre. 
Nous allons montrer que 
$f(\Tau)$ appartient à $\sacc X$. D'après (1a), 
l'assertion est 
G-locale sur $X$, ce qui permet de supposer que
$X$ est compact. Dans ce cas $\Tau$ est quasi-compact, et il existe alors un
recouvrement fini $(\Tau_i)$ de $\Tau$
par des parties $c$-linéaires compactes et, pour tout $i$, 
un domaine $k$-analytique et $\Gamma$-strict $Y_i$ de 
$Y$ contenant $\Tau_i$. On a $f(\Tau)=\bigcup f(\Tau_i)$, ce qui permet
de remplacer $Y$ par $\coprod Y_i$ et $\Tau$ par $\coprod \Tau_i$ 
(que cette opération préserve l'hypothèse que $\Tau$ appartient à 
$\sacc Y$ résulte de (1a), (1b) et (1c)), et donc de supposer
que $Y$ et $\Tau$ sont compacts. 

Par définition de $\sacc Y$ et par compacité
de $\Tau$ il existe alors une famille finie 
$(Z_j)$ d'espaces analytiques compacts et $\Gamma$-stricts et pour tout $j$ 
un $c$-squelette élémentaire compact $\Upsilon_j$ de 
$Z_j$ et un morphisme
$f_j\colon Z_j\to X$ 
 tel que $\Tau=\bigcup f_j(\Upsilon_j)$. 
Mais alors le compact $\Sigma=f(\Tau)$ est la réunion 
finie des $(f\circ f_j) (\Upsilon_j)$, et est
dès lors un élément de $\sacc X$. 
\end{proof}

\bibliographystyle{smfalpha}
\bibliography{aducros}

\end{document}